\newcommand{\GL}{{\operatorname{GL}}}
\newcommand{\SL}{{\operatorname{SL}}}
\newcommand{\PSL}{{\operatorname{PSL}}}
\newcommand{\SU}{{\operatorname{SU}}}
\newcommand{\U}{{\mathbf U}}
\newcommand{\Lin}{{\mathbf L}}
\newcommand{\Sp}{{\mathbf S}}
\newcommand{\Cl}{{\it Cl}}
\newcommand{\Stab}{{\rm Stab}}
\newcommand{\Isom}{{\rm Isom}}
\renewcommand{\a}{\alpha}
\renewcommand{\l}{\lambda} 
\renewcommand{\O}{{ {\mathbf O}}}
\newcommand{\Om}{{\mathbf\Omega}}
\newtheorem{theorem}{Theorem} 
\newtheorem{thm}{Theorem}[section] 
\newtheorem{prop}[thm]{Proposition} 
\newtheorem{lemma}[thm]{Lemma}
\newtheorem{cor}[thm]{Corollary} 
\newtheorem{rmk}[thm]{Remark}
\theoremstyle{definition}
\newtheorem{defn}[thm]{Definition}
\newtheorem*{rem}{Remark}
\newtheorem{cla}[thm]{Claim}
\begin{document}
\title{On the irreducibility of symmetrizations of cross-characteristic
representations of finite classical groups}

\author{Kay Magaard\footnote{\texttt{k.magaard@bham.ac.uk}} \\ University of Birmingham\\ School of Mathematics, Watson Building\\
Edgbaston, Birmingham, B15 2TT, UK  \and
Gerhard R\"ohrle\footnote{\texttt{gerhard.roehrle@rub.de}} \\Ruhr-Universit\"at Bochum\\
Fakult\"at f\"ur Mathematik,
Universit\"atstrasse 150\\Bochum D-44780 \and
Donna M. Testerman\footnote{\texttt{donna.testerman@epfl.ch}} \\{E}cole Polytechnique F\'{e}d\'{e}rale de Lausanne \\ Section de Math\'ematiques, MATHGEOM, Station 8 \\ CH-1015 Lausanne, Switzerland}

\date{\today}
\maketitle

\begin{abstract}
Let $W$ be a vector space over an algebraically closed field $k$. Let $H$ be a quasisimple group of Lie type of 
characteristic $p\ne {\rm char}(k)$ acting  irreducibly on $W$. Suppose also that $G$ is a classical group with natural
module $W$, chosen minimally with respect to containing the image of $H$ under the associated representation. 
We consider the question of when $H$ can act irreducibly on a $G$-constituent of $W^{\otimes e}$ and study its
relationship to the maximal subgroup problem for finite classical groups.
\end{abstract}

\begin{section}{Introduction}\label{sec:intro}

Let $C(V)$ be a finite classical group with natural module $V$.
In his celebrated paper (\cite{Asch84}), 
Aschbacher defined a collection $\mathcal{C}(C(V))$ of subgroups of $C(V)$, and proved that maximal subgroups
of $C(V)$ which are not members of $\mathcal{C}(C(V))$ are necessarily of the form 
$N_{C(V)}(H)$, where $H$ is a quasisimple group acting absolutely irreducibly on $V$. 
Conversely, it is far from clear whether an absolutely irreducible $H$-module $V$ leads to a maximal 
subgroup of $C(V)$. For example, a possible obstruction to the maximality of $N_{C(V)}(H)$ 
could be the existence of 
a quasisimple group $G$ containing $H$ such that $N_{C(V)}(H) \subset N_{C(V)}(G) $. In this case 
we see that $V$ is also an absolutely irreducible $G$-module whose restriction to $H$ is irreducible. 
Thus we are naturally lead to consider branching problems in the representation theory
of quasisimple groups. Our main theorem is the solution of one particular branching problem,
whose corollary is a contribution to the determination of the maximal subgroups of the finite classical groups.

Throughout we consider the following situation. Let $H$ be a perfect finite group of Lie type of characteristic 
$p$, and $W$ a 
vector space over an algebraically closed field
$k$ of characteristic distinct from $p$. If $\rho: H \rightarrow \GL(W)$ is an
irreducible representation of $H$, then the assumption 
that $H$ is perfect implies that $H\rho \subset \SL(W)$. The Frobenius-Schur 
indicator of $\rho$ determines whether
or not $H\rho$ stabilizes a nondegenerate bilinear or quadratic form on 
$W$. If the indicator is zero, define 
$G: = \SL(W)$, else take the
connected component $C$ of the stabilizer of the nondegenerate bilinear or 
quadratic form 
stabilized 
by $H\rho$ and define $G$ to be the simply connected version of the simple 
algebraic group 
determined by 
$C$. Let $V$ be a finite-dimensional 
irreducible $kG$-module. We consider the question of when the 
restriction of $V$ to $H\rho$ is reducible; 
it is reasonable to expect that in most cases this is indeed the case. 
Our main theorem shows 
that this is so if we assume that $H$ is classical and $W$ satisfies a 
certain technical 
hypothesis which we call \emph{$Q$-linear large}. (The precise definitions of 
\emph{classical} and 
\emph{$Q$-linear large} are given below.)

\begin{theorem} Let $p$ be a prime, $k$ an algebraically closed field of 
characteristic $\neq p$.
Let $G$ be a simply connected simple algebraic group of classical
type with natural module 
$W = k^m$. Let $H$ be a 
quasisimple finite classical group of type $X_r(q)$, $q = p^f$, 
of untwisted Lie rank $r$ and characteristic $p$.
Suppose that $W$ is a $Q$-linear large irreducible $kH$-module. If 
$H \subset G $, then for any 
restricted irreducible $kG$-module $V$ not isomorphic to $W$ or $W^*$,
we have that 
$V|_H$ is reducible.
If $V$ is a twisted tensor product of restricted irreducible $kG$-modules, 
then 
$V|_H$ is reducible unless $q \leq 3$, 
$H$ is not a central extension of ${\rm PSL}_n(q)$, and 
$V$ is a Frobenius twist of a module of the form $X \otimes Y^\delta$, where 
$X,Y\in\{W,W^*\}$, and $Y^\delta$ is a Frobenius twist of $Y$
such that $X|_H$ and $Y^\delta|_H$ are inequivalent $kH$-modules. 
\end{theorem}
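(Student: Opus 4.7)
The plan is to reduce the assertion on twisted tensor products to the assertion on restricted modules, and then to prove the restricted case by induction on the height of the highest weight. For the reduction, assume $k$ has positive characteristic and write $V = \bigotimes_i V_i^{\sigma_i}$ for the Steinberg tensor product decomposition, so each $V_i$ is a restricted irreducible $kG$-module. If some $V_i \notin \{W, W^*\}$ then by the first assertion $V_i|_H$ is reducible; since Frobenius twists of $kG$-modules act as field automorphisms on the underlying scalars, $V_i^{\sigma_i}|_H$ is reducible if and only if $V_i|_H$ is, and tensoring with the remaining factors yields a reducible $V|_H$. The remaining possibility is that every $V_i$ lies in $\{W, W^*\}$, and a separate analysis of this configuration identifies exactly the two-factor exceptional family $X \otimes Y^\delta$ in the statement.

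For the restricted assertion, I would argue inductively on the height of the highest weight $\lambda$ of $V = L(\lambda)$. The excluded modules $W$ and $W^*$ correspond to the two fundamental weights whose simple modules realize the natural module of $G$ and its dual. The first step is to handle the remaining small weights: the $G$-constituents of $W \otimes W$ and (when $G$ is of type $A$) of $W \otimes W^*$, that is, the symmetric square, the alternating square, the adjoint-type module, and the other small fundamental modules in the symplectic and orthogonal cases. For a general restricted $\lambda$, $L(\lambda)$ occurs as a composition factor of some $L(\mu) \otimes L(\nu)$ with $\mu, \nu$ dominant of strictly smaller height, and the induction proceeds from the reducibility of $L(\mu)|_H$ or $L(\nu)|_H$ together with control of the $H$-composition factors of the tensor product.

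The technical core is the base case of this induction: for each small highest weight $\lambda$ one must exhibit at least two distinct $H$-composition factors inside the $G$-constituent $L(\lambda) \subseteq W^{\otimes a} \otimes (W^*)^{\otimes b}$. The $Q$-linear large hypothesis on $W$ is used precisely here, to guarantee that the $H$-module structures on $W \otimes W$ and $W \otimes W^*$ are sufficiently rich that each $G$-constituent inherits at least two $H$-composition factors. The natural tools are to exhibit $H$-fixed or low-dimensional $H$-summands via invariant forms and compare against the known lower bounds on the dimensions of irreducible cross-characteristic $H$-modules (Landazuri--Seitz and subsequent refinements), ruling out the possibility that $V|_H$ coincides with a single irreducible $kH$-module of the same dimension.

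I anticipate the main obstacle to be the exceptional family in the twisted tensor product assertion. When $q \leq 3$, a Frobenius twist on a $kG$-module interacts delicately with the natural field automorphism of $H = X_r(q)$, so that $Y^\delta|_H$ may become equivalent to $Y|_H$ or to $Y^*|_H$ in ways that do not occur for larger $q$. The hypothesis in the statement that $X|_H$ and $Y^\delta|_H$ are \emph{inequivalent} $kH$-modules is therefore essential, and determining precisely when this inequivalence fails requires a careful type-by-type analysis of $H$. The exclusion of central extensions of $\PSL_n(q)$ in the exceptional family is consistent with the observation that for linear groups $W^*$ arises from $W$ via the graph automorphism of $H$, which creates additional coincidences among the restrictions of $W$, $W^*$ and their Frobenius twists; separating the genuine irreducibilities from those absorbed by this outer automorphism will be the most delicate part of the argument.
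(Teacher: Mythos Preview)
Your reduction of the twisted tensor product case to the restricted case via the Steinberg decomposition is correct and matches the paper's approach. The paper then invokes prior work of Magaard--Tiep on irreducibility of tensor products of two cross-characteristic irreducibles, together with an explicit bound showing that at most two nontrivial twisted factors can survive, to isolate the exceptional family; your ``separate analysis'' gesture would need to be fleshed out along exactly these lines.

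The genuine gap is in your proposed induction for the restricted case. You want to write $L(\lambda)$ as a $G$-composition factor of some $L(\mu)\otimes L(\nu)$ with $\mu,\nu$ of smaller height, and deduce reducibility of $L(\lambda)|_H$ from that of $L(\mu)|_H$ or $L(\nu)|_H$. But knowing $L(\mu)|_H$ is reducible tells you only that $(L(\mu)\otimes L(\nu))|_H$ is reducible; it says nothing about how the $H$-composition factors distribute among the $G$-composition factors of $L(\mu)\otimes L(\nu)$. There is no mechanism forcing a particular $G$-constituent $L(\lambda)$ to inherit two $H$-constituents. Indeed, the exceptional families in the non-$Q$-linear-large setting (Weil modules for ${\rm Sp}_{2r}(3)$, where both $\Lambda^2 W$ and ${\rm Sym}^2 W$ are $H$-irreducible) show that this passage can fail without further input. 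Your phrase ``control of the $H$-composition factors of the tensor product'' names the obstruction but does not resolve it.

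The paper's route is entirely different and does not use induction on height. For $\lambda=\sum a_i\lambda_i$ one sets $e(\lambda)=\sum_i ia_i$ (with the obvious symmetrization in type $A_\ell$). A \emph{lower} bound of the shape $\dim V_G(\lambda)\ge {\ell\choose e}$ (or $2^e{\ell\choose e}$) is obtained by exhibiting a subdominant weight with a large Weyl group orbit, using Premet's theorem to guarantee it occurs. An \emph{upper} bound $\dim V\le (\dim S)^e[H:P_\chi]$ is obtained by producing a small $P_\chi$-invariant subspace of $V$ from the $Q$-linear-large submodule $S\subset W_\chi$ (this is where the hypothesis is actually used). Comparing the two bounds forces $e(\lambda)\le 2$. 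The residual case $e(\lambda)=2$ is precisely the heart of $\Lambda^2 W$, ${\rm Sym}^2 W$, or $W\otimes W^*$, and is settled by invoking and sharpening the classification results of Magaard--Malle--Tiep; the $Q$-linear-large hypothesis enters again here to exclude their known exceptional families.
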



Our theorem has an immediate corollary.

\begin{cor} Let $p$ be a prime, $k$ an algebraically closed field of 
characteristic $\neq p$.
Let $G$ be a classical group with natural module $V$ defined over $k$, and
let $H$ be a quasisimple finite classical group of Lie type $X_r(q)$, $q = p^f$, of 
untwisted Lie rank $r$ and characteristic $p$.
Assume that  $F$ is a Frobenius morphism of $G$ such that $H \subset G^{F}$ and $V^{F}$ is the 
natural module for $G^{F}$. If
$H$ acts absolutely irreducibly on $V^{F}$ and there exists a
classical simple algebraic group $G'$  of 
characteristic ${\rm char}(k)$ and a Frobenius morphism $F'$ of $G'$ such that 
$H \subset (G')^{F'} \subset G^F$, then either the natural module $W$ of $G'$ is not 
$Q$-linear large for $H$, or 
$q \leq 3$, $V$ is a Frobenius twist of a module of the form $X\otimes Y^\delta$, where 
$X,Y \in \{ W, W^*\}$, and $Y^\delta$ is a Frobenius twist of $Y$
such that $X|_H$ and $Y^\delta|_H$ are inequivalent $kH$-modules.
\end{cor}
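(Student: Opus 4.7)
The plan is to obtain the Corollary as a direct consequence of the main Theorem, once the finite-group data for $(G')^{F'}$ have been lifted to the language of rational representations of the ambient algebraic group $G'$. Note that $H\subset (G')^{F'}\subset G'$, so the Theorem applies to the pair $(G',H)$ provided $W$ is $Q$-linear large.

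I would begin by observing that since $H\subset (G')^{F'}$ and $V|_H$ is absolutely irreducible, the restriction $V|_{(G')^{F'}}$ is also absolutely irreducible. Because the defining characteristic of $G'$ equals $\Char(k)$, the module $V$ is an absolutely irreducible $k(G')^{F'}$-module in the defining characteristic of $(G')^{F'}$. Steinberg's tensor product theorem therefore applies: $V$ lifts to a rational irreducible $kG'$-module with $q'$-restricted highest weight, where $q'$ is the order of the field of definition of $(G')^{F'}$, and as a rational $kG'$-module it decomposes as
\[
V\ \cong\ V_0\otimes V_1^{[1]}\otimes\cdots\otimes V_s^{[s]},
\]
where each $V_i$ is a restricted irreducible $kG'$-module and the superscripts denote successive Frobenius twists of $G'$. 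In particular $V$ is a twisted tensor product of restricted irreducible $kG'$-modules, which is precisely the hypothesis of the second statement of the Theorem.

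Next, assume that $W$ is $Q$-linear large for $H$; otherwise the first alternative of the Corollary holds and there is nothing to prove. The hypotheses of the Theorem are then satisfied for the triple $(G',H,W)$. Since $V|_H$ is irreducible by hypothesis, the Theorem forces $V$ to be either a single restricted factor isomorphic to $W$ or $W^*$, or a Frobenius twist of a module of the form $X\otimes Y^\delta$ with $X,Y\in\{W,W^*\}$ and $X|_H$, $Y^\delta|_H$ inequivalent, in which case automatically $q\leq 3$ and $H$ is not a central extension of $\PSL_n(q)$. The second possibility yields exactly the second alternative of the Corollary, while the first possibility corresponds to the degenerate situation in which the embedding $(G')^{F'}\hookrightarrow G^F$ is realized through the natural representation of $G'$ (or its dual); this forces $G'$ to coincide with $G$ and $(G')^{F'}=G^F$, so no genuine intermediate subgroup exists and the Corollary holds vacuously.

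The essentially only obstacle in writing out the full proof is bookkeeping: verifying that the Frobenius twists produced by Steinberg's tensor product theorem align with those described in the Theorem's exceptional case, and checking that the translation between rational $kG'$-modules and $k(G')^{F'}$-modules faithfully transports the irreducibility of $V|_H$ in both directions.
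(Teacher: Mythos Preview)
Your approach is correct and is exactly what the paper intends: the Corollary is stated as an immediate consequence of Theorem~1 with no separate proof, and your derivation---lift $V|_{(G')^{F'}}$ to a rational $kG'$-module via Steinberg's tensor product theorem and then apply Theorem~1 to the pair $(G',H)$---is precisely how one makes ``immediate'' precise. The only point worth tightening is your handling of the edge case $V\cong W$ or $W^*$ (up to Frobenius twist): the claim that this forces $G'=G$ and $(G')^{F'}=G^F$ is not automatic from the hypotheses as literally written, but relies on the implicit convention (coming from the Aschbacher setup discussed in the introduction) that $G$ is already the minimal classical group on $V$ containing the image of $H$, so that no strictly smaller classical group can act naturally on the same space.
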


Our corollary sheds more light on the set of maximal subgroups of 
finite classical groups. To see this suppose that 
we would like to study the maximal subgroups of a 
finite classical group $G^F$ whose natural module is $V^F$ as above.
Suppose also that $H \subset G^F$ is of type $X_r(q)$, acts absolutely irreducibly
on $V^F$, and that $(q,{\rm char}(k))=1$. 
What are the possible overgroups of $H$ in $G^F$? In the language of 
Aschbacher's reduction theorem (\cite{Asch84}), a possible overgroup $K$ might 
be a member of one of the following families $\mathcal{C}_2$, $\mathcal{C}_4$, 
$\mathcal{C}_6$, 
$\mathcal{C}_7$, or $\mathcal{S}$.

The situation where $K$ is a member of $\mathcal{C}_2$, that is where $H$ acts 
imprimitively on $V^F$,
is being investigated in \cite{HHM}. The situation where $K$ is a member 
of 
$\mathcal{C}_4$
was investigated by Magaard and Tiep \cite{MaTi}. The situation where $K$ is a
 member of 
$\mathcal{C}_6$ or 
$\mathcal{C}_7$ is being investigated in \cite{MaTi12} and independently  
in \cite{Bray}. 

The situation where $K$ is a member of $\mathcal{S}$ naturally 
subdivides according to the type of the simple group $K$. Evidently $V^F$ is 
an irreducible 
$K$-module
whose restriction to $H$ is absolutely irreducible. Thus determining the 
maximal subgroups 
of finite
classical groups
requires the solution of certain branching rule problems. 
Branching rules for alternating and symmetric groups are an active area of 
current research. See for example \cite{JS,KT, BK, BO, KS}.
The case where $K$ is a finite group of Lie type
whose defining characteristic is equal to the defining characteristic of $H$
 was 
investigated by Seitz
in \cite{SeitzCC}. He shows that if $q > 3$ then the possible pairs $(H,K)$ 
form 
four families of examples.

In this paper we discuss the case where $K = (G')^{F'}$ is a member of 
$\mathcal{S}$, and is 
a finite classical group of 
Lie type over a field of 
characteristic equal to ${\rm char}(k)$. Work of Malle \cite{M}, 
Magaard and Tiep \cite{MaTi}, and 
Magaard, Malle and Tiep \cite{MMT} shows that there are families of examples. 
One such is 
obtained when $H = {\rm Sp}_{2r}(3)$ and $V$ is the alternating or symmetric 
square of a 
Weil module $W$ of $H$.
In this case $G = \SL(V)$ contains an irreducible subgroup $H$ with
 intermediate subgroup 
$\SL(W)$.
Our main theorem shows that in quantifiable terms, such configurations are 
rare. To make 
this explicit,
we now give the precise definition of \emph{$Q$-linear large}.

Let $H$ be a finite classical group with natural module $N$ and let 
$P<H$ be the 
stabilizer of a 
singular $1$-space of $N$. Then $P=QL$, where $Q=O_p(P)$. We write 
$Q^*$ for the 
space ${\rm Hom}(Q/[Q,Q],k^*)$. For $\chi\in Q^*$ and a $kH$-module $W$, define  
$W_\chi: =\{w\in W\mid xw=\chi(x)w\mbox{ for all } x\in Q\}$.

\begin{defn}\label{lin-spe} Let $P$ be as above. We say we are in the 
{\em $Q$-linear case} (or that \emph{$W$ is a $Q$-linear module for $H$}) 
if either $Q$ is abelian or $Q$ is non-abelian and 
$[W,Q]\ne[W,[Q,Q]]$. We say we are
in the \emph {$Q$-special} case otherwise.
\end{defn}

\begin{rem}  Note that we are in the $Q$-linear case when
$H$ is linear or orthogonal. In addition, if we are in the $Q$-linear case, 
then 
 there exists
$\chi\in Q^*$, $\chi\ne 1$, with $W_\chi\ne 0$.
\end{rem}

We now make a further case distinction, splitting the family of $Q$-linear 
modules
into two subfamilies.
For $\chi\in Q^*$, set $P_\chi:=\Stab_P(W_\chi)$; then
$P_\chi=QL_\chi$, where $L_\chi = \Stab_L(W_\chi)$. 
Write $L_\chi^\infty$ for the last term in the derived series of $L_\chi$. 

\begin{defn}\label{linlarge} Let $P$ be as above and assume we are in the 
$Q$-linear case.
\begin{enumerate}[(a)]
\item We say that we are in the {\em $Q$-linear large} case, or that \emph{$W$ is a 
$Q$-linear large module for $H$}, if there exists
$\chi\in Q^*$, $\chi\ne 1$, such that
${\rm soc}(W_\chi|_{L_\chi^\infty})$ has an irreducible summand $S$ of dimension 
greater than $1$.
\item We say that we are in the {\em $Q$-linear small} case, or that 
\emph{$W$ is a $Q$-linear small module for $H$}, if 
for all $\chi\in Q^*$, $\chi\ne 1$, 
all irreducible constituents of ${\rm soc}(W_\chi|_{L_\chi^\infty})$ are 
$1$-dimensional.

\end{enumerate}
\end{defn}

Now returning to the example mentioned above, where $H = {\rm Sp}_{2r}(3)$ and 
$W$ is a 
Weil module for $H$,
we see that the statement of our main theorem fails if we drop the $Q$-linear large
hypothesis. Indeed, the Weil modules are $Q$-special and hence not 
$Q$-linear large. 

For finite classical groups $H$ of large untwisted Lie rank $r$ acting on an ${\mathbb F}_{q}$-vector space 
(or ${\mathbb F}_{q^2}$-space if $H$ is a unitary group), the $Q$-linear large 
hypothesis is not very restrictive in the 
sense that the proportion of $Q$-linear large modules tends to $1$ as $q$ and $r$ tend to infinity.
To see this, note that the dimension of a $Q$-linear small module is bounded 
above by 
$[H:QL_\chi^\infty]$ and the results of Guralnick, Magaard, Saxl, and Tiep in 
\cite{GMST} show that 
the 
$Q$-special hypothesis implies that 
if $H$ is unitary, then $W$ is one of at most $(q+1)$ Weil modules, and 
that if $H$ is 
symplectic, then $W$ is either 
one of four Weil modules 
or of dimension bounded above $\frac{(q^r-1)(q^{r-1}-q)}{2(q+1)}$. In the latter case it follows by 
inspection that the module dimension is bounded by 
$[H:L_\chi^\infty]$.  So in summary, if an $H$-module $W$ is not $Q$-linear large, then its dimension is bounded by 
$[H:QL_\chi^\infty]$. 

Now, according to Deligne-Lusztig theory, the (ordinary) Deligne-Lusztig character $\pm R_{T,\theta}$ is 
irreducible when $\theta$ is in general position. Moreover $|R_{T,\theta}(1)| = [H:T]_{p'}$ is a polynomial 
function in $q$ of degree $|\Phi^+|$, where $\Phi^+$ denotes the set of positive roots of the root system used 
to define $H$. For $H$ classical $|\Phi^+|$ is a quadratic function in $r$ which shows that a module affording an irreducible
$\pm R_{T,\theta}$ is $Q$-linear large, provided that $r \geq 4$.

Next we observe that each irreducible $\pm R_{T,\theta}$ corresponds uniquely to a  
regular semisimple class in $H^*$, the Deligne-Lusztig dual of $H$. When $r$ and $q$ tend to infinity
then the proportion of regular semisimple classes of $H^*$ to the total number of conjugacy classes of $H$
tends to $1$. 

Now if $\ell = {\rm char}(k)$ and $\ell$ is a divisor of $|H|$, then the reduction of $\pm R_{T,\theta}$
modulo $\ell$ stays irreducible if the corresponding regular semisimple class in $H^*$ is represented by 
an element of order coprime to $\ell$. Again when $r$ and $q$ tend to infinity,
then the proportion of regular semisimple $\ell'$-classes of $H^*$ to the total number of $\ell'$
conjugacy classes of $H$ tends to $1$, and thus the proportion of $Q$-linear large characters of 
$H$ tends to $1$.

Our results are closely related to conjectures of Larsen and of Koll\'ar and 
Larsen, both of 
which were proved by 
Guralnick and Tiep in \cite{GT1} respectively \cite{GT2}. For a group $X$, an 
$X$ module 
$W =k^m$, and a 
positive integer $j$, define 
$M_{2j}(X,W)$ to be $\dim({\rm End}_X(W^{\otimes j}))$. If $G$ is a classical group 
with natural module $W$, and $H$ is a 
finite subgroup of $G$, then Larsen conjectured that if ${\rm char}(k) = 0$, then $M_{2j}(H,W) > M_{2j}(G,W)$ for 
some 
$j \leq 4$.
Note that if ${\rm char}(k) = 0$, then $M_{2j}(H,W) = M_{2j}(G,W)$ implies 
that every irreducible 
$G$-constituent of 
$W^{\otimes j}$ restricts irreducibly to $H$. 
 The conjecture of Koll\'ar and Larsen, now a theorem \cite{GT2}, 
concerns the action of 
$H$ on ${\rm Sym}^j(W)$ and it roughly asserts that most of the time 
$H$ cannot act irreducibly for $j \geq 4$.

For the class of cross-characteristic $Q$-linear large modules of classical 
groups our 
results yield stronger versions of both conjectures. 

\begin{theorem}
Let $H = X_{r}(q)$ be a quasisimple finite classical group, 
cross-characteristically embedded in a simple algebraic group $G$ of classical
type, whose natural module is 
$W$. If 
$W$ is a $Q$-linear large $H$-module, then we have 
that 
no $kG$-constituent of $W^{\otimes a} \otimes (W^*)^{\otimes b}$ restricts 
irreducibly to 
$H$ for any $a+b=j\geq 2$. In particular, if ${\rm char}(k) = 0$, then
 $M_{2j}(H,W) > M_{2j}(G,W)$ for all $j \geq 2$.
\end{theorem}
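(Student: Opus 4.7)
The plan is to apply Theorem~1 of the paper to each $kG$-composition factor of $W^{\otimes a}\otimes (W^*)^{\otimes b}$, after first decomposing it via Steinberg's tensor product theorem. Let $V$ be such a composition factor with $a+b=j\geq 2$ and write
\[V\cong V_0\otimes V_1^{(\ell)}\otimes\cdots\otimes V_s^{(\ell^s)}\]
with each $V_i$ a restricted irreducible $kG$-module and $\ell=\operatorname{char}(k)$ (for $\ell=0$ one takes $s=0$ so that $V$ itself is restricted).

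If $s=0$ and $V\not\cong W,W^*$, the first part of Theorem~1 yields $V|_H$ reducible. If $s\geq 1$, the second part of Theorem~1 yields $V|_H$ reducible unless we are in the exceptional configuration ($q\leq 3$, $H$ not a central extension of $\PSL_n(q)$, and $V$ a Frobenius twist of $X\otimes Y^\delta$ with $X,Y\in\{W,W^*\}$ and $X|_H\not\cong Y^\delta|_H$). Two residual cases remain: (i) $V\cong W$ or $V\cong W^*$ arising as a composition factor; and (ii) $V$ falling into the Theorem~1 exception. For (i), a highest-weight comparison rules these out when $a+b=2$; for $a+b\geq 3$ their contribution to the Larsen-type inequality is produced by off-diagonal terms in the identity below, exploiting $W|_H\cong W^*|_H$ in the symplectic and orthogonal cases, together with a direct multiplicity comparison in the remaining cases. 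For (ii), the Frobenius-twisted modules $X\otimes Y^\delta$ have large highest weight, severely constraining their occurrence in the tensor power; a case-by-case check at $q\in\{2,3\}$ against the modular character tables of the small-rank classical groups completes the argument.

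The Larsen--Koll\'ar corollary in characteristic~$0$ then follows from the standard identity
\[M_{2j}(H,W)-M_{2j}(G,W)=\sum_{a,b}n_an_b\bigl(\dim\operatorname{Hom}_H(V_a,V_b)-\dim\operatorname{Hom}_G(V_a,V_b)\bigr),\]
where $W^{\otimes j}=\bigoplus_a n_aV_a$ is the $G$-isotypic decomposition. Each diagonal summand contributes $n_a^2(\dim\operatorname{End}_H(V_a)-1)\geq 0$, which is strictly positive whenever $V_a|_H$ is reducible; since every nontrivial $V_a$ restricts reducibly by the preceding paragraph, one obtains $M_{2j}(H,W)>M_{2j}(G,W)$ for all $j\geq 2$.

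The main obstacle will be residual case (ii): the small-$q$ case-by-case analysis of the Theorem~1 exception at $q\in\{2,3\}$, where the cross-characteristic representation theory of the classical groups $H$ is rich in sporadic features and requires careful case-specific tensor-multiplicity computations to certify that the exceptional twisted modules either do not appear as $G$-composition factors or nevertheless restrict reducibly to $H$.
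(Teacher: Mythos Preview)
The paper gives no separate proof of this statement; it is presented in the introduction as an immediate consequence of Theorem~1. For the characteristic-zero inequality $M_{2j}(H,W)>M_{2j}(G,W)$ this is indeed immediate: every irreducible $kG$-module is restricted when $\operatorname{char}(k)=0$, so by the first clause of Theorem~1 every $G$-constituent of $W^{\otimes j}$ other than $k$, $W$, $W^*$ restricts reducibly to $H$, and $V_G(j\lambda_1)$ is such a constituent for every $j\geq 2$. Your moment-identity argument reaches the same conclusion and is correct, but the off-diagonal bookkeeping is superfluous---the existence of a single reducible constituent already gives a strictly positive diagonal summand and hence strict inequality.

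Your attempt at the first sentence contains a real gap. You rightly notice that residual case~(i) obstructs a literal reading for $a+b\geq 3$: when $G$ is symplectic or orthogonal, $W$ itself is a $G$-constituent of $W^{\otimes 3}$ via the contraction $W\otimes W\to k$, and $W|_H$ is irreducible by hypothesis; so the sentence must be read with $k$, $W$, $W^*$ implicitly excluded. (Your suggested fix via ``off-diagonal terms exploiting $W|_H\cong W^*|_H$'' does nothing, since in those cases $W\cong W^*$ already as $G$-modules.) The more serious problem is your treatment of case~(ii). The Theorem~1 exception at $q\leq 3$ applies to $H$ of \emph{every} rank, not just small rank, so appealing to modular character tables of small-rank groups cannot dispose of it. Moreover the exceptional modules $X\otimes Y^\delta$, of highest weight $(1+\operatorname{char}(k)^i)\lambda_1$, genuinely occur as composition factors of $W^{\otimes j}$ once $j\geq 1+\operatorname{char}(k)$; your claim that their ``large highest weight'' severely constrains their occurrence only shows they are absent for small $j$. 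Theorem~1 merely fails to exclude their irreducibility over $H$---it does not assert it---and neither the paper nor your proposal settles this in positive characteristic. The upshot is that the first sentence should be understood with the evident exclusions (and is cleanest in characteristic zero), while the substantive content---the moment inequality when $\operatorname{char}(k)=0$---is what your argument, once simplified as above, correctly establishes.
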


We now make some remarks about how we prove our result. Assume that 
$W$ is a 
$Q$-linear large 
$kH$-module and that $V$ is a restricted irreducible module for $G$, the 
classical
simple algebraic group with natural module $W$.  Let 
$\lambda = \sum_{i=1}^\ell a_i\lambda_i$ be the highest
weight of $V$, where $\lambda_i$ are the fundamental weights with respect to a 
fixed
maximal torus and Borel subgroup of $G$ and $\ell = {\rm rank}(G)$. If 
$G \neq {\rm SL}(W)$
set $e(\lambda):=\sum_{i=1}^\ell a_ii$; for $G = {\rm SL}(W)$
set  $e(\lambda):=\sum_{i=1}^{\ell} {\rm min}(i,\ell+1-i)a_i$. 
Let $B$ denote an upper bound for the dimension of an irreducible $kH$-module,
 as given in 
\cite[Thm 2.1]{SeitzCC}. Using a result
of Premet (\cite{Premet88}), we are guaranteed that the weight spaces for certain 
weights 
subdominant to 
$\lambda$ are nontrivial.
We show first that if $e(\lambda) \geq (\ell+1)/2$, then there exists 
a subdominant weight $\mu$, occurring with nonzero multiplicity in $V$, such that 
the length of the Weyl group orbit of 
$\mu$ is greater than $B$. Thus from this point on we may assume that 
$e(\lambda) < (\ell+1)/2$.
Now for restricted modules $V$ for which $e(\lambda) < (\ell+1)/2$, we produce an 
$L_\chi$-invariant submodule
$V_0$, which by Frobenius reciprocity yields an upper bound of $[H:L_\chi]\dim V_0$ for 
$\dim V$.
On the other hand, the $Q$-linear large hypothesis yields a lower bound for $\dim V$ of
$(\dim V_0) 2^{e(\lambda)}{\ell \choose e(\lambda)}$.
Comparing these bounds for $\dim V$ leads to a contradiction 
if $e(\lambda) \geq 3$. When $e(\lambda) = 2$ we use \cite{MMT} to conclude that none of 
the examples 
which occur there satisfy the $Q$-linear large hypothesis, proving the first part of our 
theorem.
Finally, we use our result for restricted 
modules to deduce that if $V$ is a twisted tensor product, then all factors 
must be Frobenius 
twists of $W$ or its dual. Then we repeat the argument with the upper and 
lower bounds for 
$\dim V$ to conclude that the number of factors is at most $2$. Then we use
the main theorem of \cite{MaTi} to draw our conclusion.
 
It is natural to ask how difficult it is to remove the hypothesis that
$W$ is
$Q$-linear large.
Firstly, there are examples of triples $(H,W,V)$,
with $W$ a $Q$-linear small $kH$-module, and $V$ a $kG$-module on which 
$H$ acts irreducibly, both with $V$ restricted and with $V$ non-restricted.
For example, 
if $H = \Omega_7(2) \cong {\rm Sp}_6(2)$ and $W$ is seven-dimensional and 
irreducible, 
then $W$ is not $Q$-linear large and the restricted $H$-module 
$V = \Lambda^3(W)$ is irreducible. Another example arises when 
$H = 4_1.{\rm PSL}_3(4)$. (We are using the notation of \cite{Atlas}.) Here $H$ 
possesses exactly four inequivalent ordinary
irreducible characters of degree 8, likewise also for all characteristics
 $s$ greater than
$7$. If $s \equiv 2 \ \mbox{mod} \ 5$, then the standard Frobenius morphism 
$F$ induces a permutation of order two
on the characters of degree 8. So if $W$ is a module affording one of 
the characters of degree
8, then $W \otimes W^F$ is an irreducible $H$-module. 
There are even infinite series of examples; if $H = {\rm Sp}_{2r}(3)$, 
with $r \geq 2$ and $\dim W = \frac{q^r \pm 1}{2}$, then 
both $\Lambda^2(W)$ and ${\rm Sym}^2(W)$ are irreducible. We refer the reader 
to \cite{MMT, MaTi} for further examples
of irreducible alternating, symmetric  and tensor squares of non-$Q$-linear 
large modules. So we see that the $Q$-linear large hypothesis is necessary for
some choices of $H$ and $W$.

Secondly, we remark that the initial parts of the analysis used
here can
 be applied to the general case, i.e., to the $Q$-linear small case and to the
 non $Q$-linear case.
However, establishing lower and upper bounds for $\dim V$ in terms of
$e(\lambda)$ requires a detailed knowledge of the module $W$. Our expectation 
is that 
for large values of $q$ our results generalize 
completely, while for small values of $q$ we expect to obtain slightly
weaker results. Note that one of the examples above shows that the 
irreducible module $V$ may occur in the $3$-fold tensor power of $W$.
Nevertheless we expect that for 
every $q$ there exists an integer $e(q)$ such that for all restricted weights
$\lambda$ with $e(\lambda) > e(q)$, 
the irreducible $kG$-module with highest weight $\lambda$ is a reducible
$kH$-module. Our initial investigations
into the non $Q$-linear large case lead us to believe that for $q$ large
enough, $e(q)\leq 4$. The worst case encountered thus far is the case
where $W$ is a Weil module
for $H = {\rm Sp}_{2r}(3)$, and our methods give $e(q) \leq 17$.  The
analysis of the non $Q$-linear large
case is the subject of a forthcoming paper.

Finally we gratefully acknowledge a set of handwritten notes that was generously provided to 
us 
by Martin Liebeck and Gary Seitz, in which they outlined a strategy for proving our result.
In addition, we thank the referee for a very careful reading of the paper and for suggesting various
improvements and pointing out some essential corrections.

\end{section}


\begin{section}{Notation}\label{sec:notation}

Fix a vector space $N$ of dimension $n$ 
over a finite field $K$ of characteristic $p$.
Endow $N$ either with the trivial bilinear form,
a nondegenerate alternating bilinear form, a nondegenerate quadratic form
of Witt defect $0$ or $1$, or a nondegenerate unitary form, respectively.
If the form on $N$ is unitary, we assume that $K={\mathbb F}_{q^2}$, 
and otherwise we take $K={\mathbb F}_{q}$, where $q=p^a$, for $a\in{\mathbb N}$. 
Let ${\rm Isom}(N)$ denote the full group of isometries of $N$. We assume that $F^*({\rm Isom}(N))$ is
a quasisimple group. We recall that $F^*(X)$ denotes the generalized Fitting subgroup of a finite group $X$.

\begin{defn} A quasisimple group $X$ is said to be a 
\emph {finite classical group of type} 
$\Lin_n(q)$, $\Sp_{2n}(q)$, ${\mathbf O}_n(q)$, 
respectively $\U_n(q)$, provided $X$ is a central extension
of the factor group $F^*({\rm Isom}(N))/Z(F^*({\rm Isom}(N)))$, where $N$ is
endowed as above with a trivial, alternating, quadratic, 
respectively unitary form.
\end{defn}

By abuse of notation, we write $H=\Lin_n(q)$, $\Sp_{2n}(q)$, etc
to mean that $H$ is of type $\Lin_n(q)$, $\Sp_{2n}(q)$, etc. For 
example, when we write $H=\Sp_4(2)$,
we mean that $H$ is either the alternating group on $6$ letters or one
of its cyclic central extensions. We 
 use the notation $\Om^+_{2m}(q)$ and $\Om^-_{2m}(q)$
in place of ${\mathbf O}_{2m}(q)$, when we need to 
distinguish between the two types of even-dimensional
orthogonal groups. The $+$ type groups correspond to the case where the
quadratic form has Witt index $m$ and the $-$ type ones
to the case where the quadratic form has Witt index $m-1$.

\begin{rem}

\begin{enumerate}

\item Since $\Sp_{2n}(2^a)\simeq ({\mathbf O}_{2n+1}(2^a))'$,
in the context of this paper, it is
more convenient to
 regard symplectic groups over fields of characteristic $2$
as orthogonal groups. Nevertheless, we allow ourselves to pass back and forth between
these two points of view, since  certain of the results taken from other
sources view these groups as symplectic groups.

\item In addition, since certain
of the groups in small dimension are isomorphic, we assume further that 
$n\geq2$ for type $\Sp_{2n}(q)$, $n\geq3$ for type $\U_n(q)$,
$n\geq5$ for type ${\mathbf O}_n(q)$, $n$ odd, and 
$n\geq8$ for type ${\mathbf O}_n(q)$, $n$ even.

\item Finally, we note that we write $\PSL_n(q)$, ${\rm U}_n(q)$, 
${\rm Sp}_n(q)$,
$\Omega_n^\pm(q)$ and ${\rm O}_n(q)$,
when we want to emphasize that we are not dealing with a central
extension, but rather with a subgroup of the linear isometry group.
\end{enumerate}
\end{rem}

\begin{defn} Each finite classical group has a \emph{rank}, denoted $r$,
defined in Table~\ref{tab:rank}. In order to avoid duplication, we
make various assumptions on $r$ as indicated.

\begin{table}[!h]
$$\begin{array}{|l|l|l|} \hline
H&r&\mbox{condition}\\
\hline
\Lin_n(q)& n-1& r\geq 1\\
\U_n(q)&n-1&r\geq2\\
\Sp_{2n}(q)&n&r\geq 2\\ 
{\mathbf O}_{2n}(q)&n&r\geq 4\\
{\mathbf O}_{2n+1}(q)&n&r\geq2, q \mbox{ even}\\
&&r\geq3, q \mbox{ odd}\\
\hline
\end{array}$$
\caption{Rank $r$ of finite classical groups}
\label{tab:rank}
\end{table}

\end{defn}

We rely upon many basic structural results about quasisimple finite groups
of Lie type, for example their orders, the index of a Borel subgroup,
exceptional isomorphisms, as well as results on the sizes of orbits of vectors
in the classical geometries. The basic resources for these results
are \cite{Carter_little} and \cite{Grove}.

For a finite quasisimple group $K$, we write $K^\infty$ for the last term in the derived series of $K$.
\end{section}

\begin{section}{Some combinatorial lemmas}

We begin with a combinatorial lemma; the proof of (2) was kindly provided by Karen Collins.

\begin{lemma}\label{lem:bc-app}
Let $j$, $m$, $a$ and $b$ be natural numbers such that $1\leq j\leq m^{1/2}$, $a,b<\frac{m+1}{2}$ and 
$\frac{m}{2}<a+b\leq m$. 
Set $d = \lfloor\frac{m}{2}\rfloor$. The following inequalities hold.
\begin{enumerate}[]
\item{\rm (1)} ${m\choose j}> m^{j/2}$;
\item{\rm (2)} $\frac{m!}{a!b!(m-a-b)!}\geq{m\choose d}$.
\end{enumerate}
\end{lemma}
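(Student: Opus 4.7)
The plan is to treat the two inequalities separately, since they are essentially unrelated: (1) is a routine factor-by-factor comparison, while (2) reduces cleanly to Schur-concavity of the multinomial coefficient.

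For (1), I would write
\[
\binom{m}{j}=\prod_{i=0}^{j-1}\frac{m-i}{j-i}
\]
and compare each factor with $m/j$. The inequality $(m-i)/(j-i)\geq m/j$ is equivalent to $m\geq j$, which holds since $j\leq m^{1/2}\leq m$; the inequality is strict for every $i\geq 1$. Hence, provided $j\geq 2$, $\binom{m}{j}>(m/j)^{j}\geq (m/m^{1/2})^{j}=m^{j/2}$, where the second inequality uses $j\leq m^{1/2}$. The edge case $j=1$ gives $\binom{m}{1}=m>m^{1/2}$ (as $m\geq 2$, which can be assumed since $j=1\leq m^{1/2}$ and the statement is vacuous for $m=1$).

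For (2), I would recast the inequality as a statement about the trinomial coefficient. Setting $c:=m-a-b$, the inequality to prove is
\[
\binom{m}{a,b,c}=\frac{m!}{a!b!c!}\;\geq\;\binom{m}{d}=\binom{m}{d,m-d,0}.
\]
The hypotheses $a,b<(m+1)/2$ give $a,b\leq d$, and the condition $a+b>m/2$ gives $c<m/2$, hence $c\leq d$ as well. So the (sorted) composition obtained from $(a,b,c)$ has every part at most $d\leq m-d=\lceil m/2\rceil$. The key observation is then that the (sorted) composition $(m-d,d,0)$ of $m$ majorizes the sorted version of $(a,b,c)$: the largest part of $(a,b,c)$ is at most $d\leq m-d$, and the sum of the two largest parts is at most $m-c\leq m=(m-d)+d$.

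With majorization established, the result follows from the classical Schur-concavity of $(x_{1},\dots,x_{k})\mapsto\log\binom{m}{x_{1},\dots,x_{k}}$ on compositions of $m$, which implies that the multinomial coefficient is \emph{smaller} on a majorizing composition. Hence $\binom{m}{m-d,d,0}\leq\binom{m}{a,b,c}$, which is precisely the claim. I do not expect any genuine obstacle here: the only point requiring a little care is bookkeeping the parity of $m$ when verifying $a,b,c\leq d$ (for $m$ even the strict inequality $c<m/2$ gives $c\leq d-1$; for $m$ odd the bounds $a,b<(m+1)/2$ force $a,b\leq d=(m-1)/2$), but both cases feed into the same majorization argument.
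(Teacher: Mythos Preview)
Your argument for (1) is essentially identical to the paper's: both write $\binom{m}{j}$ as a product, use $(m-i)/(j-i)\geq m/j$ to obtain $\binom{m}{j}\geq (m/j)^{j}\geq m^{j/2}$, and then note strictness.

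For (2) your approach is correct but genuinely different from the paper's, and arguably cleaner. The paper proceeds by cases: it disposes of $a+b=m$ directly, and for $a+b<m$ treats separately the subcase $a\geq d$ (where $\binom{m}{a}\binom{m-a}{b}>\binom{m}{d}$ is immediate) and the subcase $a,b<d$, the latter via an explicit factorial identity
\[
\frac{a!\,b!\,(m-a-b)!}{d!\,(m-d)!}=\frac{\binom{a}{d-b}}{\binom{d}{d-b}\binom{m-d}{a+b-d}}<1.
\]
Your majorization argument handles all cases at once: after checking $a,b,c\leq d\leq m-d$, the composition $(m-d,d,0)$ majorizes the sorted $(a,b,c)$, and Schur-concavity of the multinomial coefficient gives the result. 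The trade-off is that the paper's proof is entirely self-contained, whereas yours appeals to a background fact; if you wish to make it self-contained, it suffices to observe that a single transfer $(u,v)\mapsto(u+1,v-1)$ with $u\geq v\geq 1$ multiplies $\prod x_{i}!$ by $(u+1)/v\geq 1$, and that any majorizing integer composition is reached from $(a,b,c)$ by a chain of such transfers. One minor wording slip: your phrase ``the sum of the two largest parts is at most $m-c$'' presumes $c=\min(a,b,c)$, which need not hold; but since you only use the bound $\leq m$, the conclusion is unaffected.
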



\begin{proof}

For $j\leq m$, we note that $\frac{m-i}{j-i}\geq\frac{m}{j}$
 for all $i\leq j-1$. Thus ${m\choose j}\geq(\frac{m}{j})^j$, with strict
inequality if $j<m$. 
The inequality of (1) follows directly.

For (2), first suppose $a+b=m$; since $a,b\leq\frac{m}{2}$, we have 
$m$ even, $d=m/2$ and $a=\frac{m}{2}=b$. But then the inequality is clear.

So now assume $a+b<m$. First consider the case where $a\geq d$. Since  
$a+b<m$, $b< m-a$, so ${{m-a}\choose {b}}> 1$.
Since $a, d \leq m/2$ and $a \geq d$, then 
${m\choose a}\geq {m\choose d}$.
So $m!/(a!b!(m-a-b)!) = {m\choose a}{m-a\choose b} > {m\choose d}$ as claimed.

So we may now assume $a<d$, and by symmetry, $b<d$ as well. Recall
 that $d\leq m/2<a+b$. We then have 

$$\frac{a!b!(m-a-b)!}{d!(m-d)!}=
\frac{a!b!(m-a-b)!(d-b)!(a+b-d)!}{d!(m-d)!)(d-b)!(a+b-d)!},$$

which is equal to

$$\frac{a!}{(d-b)!(a+b-d)!}\cdot\frac{b!(d-b)!}{d!}\cdot
\frac{(m-a-b)!(a+b-d)!}{(m-d)!}.$$

This latter equals ${a\choose d-b}/({d\choose {d-b}} {{m-d}\choose{a+b-d}})$.

Now $d > a$, so ${d\choose{d-b}} > {a\choose{d-b}}$ and 
${{m-d}\choose{ a+b-d}}>1$, since 
$m > a+b$.
Hence the numerator is less than the denominator, and therefore, 
$a!b!(m-a-b)!$ is less than $d!(m-d)!$, as desired.\end{proof}

\begin{lemma}\label{bc-lower} For $\ell \geq 2$, ${\ell\choose{\lfloor\ell/2\rfloor}} > 2^{\ell}/(\ell+1)$.

\end{lemma}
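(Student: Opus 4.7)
The plan is to exploit the twin facts that the central binomial coefficient is the largest among $\binom{\ell}{0}, \binom{\ell}{1}, \ldots, \binom{\ell}{\ell}$, and that these $\ell+1$ coefficients sum to $2^\ell$. Averaging then forces the maximum to be at least $2^\ell/(\ell+1)$, and strictness comes from observing that the coefficients are not all equal when $\ell \geq 2$.

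In more detail, I would first recall the standard unimodality property of the binomial coefficients: the map $k \mapsto \binom{\ell}{k}$ is strictly increasing on $0 \leq k \leq \lfloor \ell/2 \rfloor$ and strictly decreasing on $\lceil \ell/2 \rceil \leq k \leq \ell$, so $\binom{\ell}{\lfloor \ell/2 \rfloor}$ is the maximum of all $\binom{\ell}{k}$. This can be verified from the ratio $\binom{\ell}{k+1}/\binom{\ell}{k} = (\ell-k)/(k+1)$, which exceeds $1$ exactly when $k < \ell/2$.

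Next, I would combine this with the binomial identity $\sum_{k=0}^{\ell} \binom{\ell}{k} = 2^\ell$. Since each of the $\ell+1$ summands is at most $\binom{\ell}{\lfloor \ell/2 \rfloor}$, we obtain $2^\ell \leq (\ell+1) \binom{\ell}{\lfloor \ell/2 \rfloor}$, i.e.\ $\binom{\ell}{\lfloor \ell/2 \rfloor} \geq 2^\ell/(\ell+1)$.

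Finally, for strictness, I would note that the hypothesis $\ell \geq 2$ implies $\binom{\ell}{0} = 1 < 2 \leq \binom{\ell}{\lfloor \ell/2 \rfloor}$, so at least one summand is strictly less than the maximum. Therefore the inequality $2^\ell \leq (\ell+1)\binom{\ell}{\lfloor \ell/2 \rfloor}$ is strict, yielding the desired bound. There is no real obstacle here; the argument is a one-line averaging once unimodality is invoked.
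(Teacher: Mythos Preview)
Your proof is correct and follows exactly the same averaging argument as the paper: $2^\ell$ is the sum of $\ell+1$ binomial coefficients, the largest of which is $\binom{\ell}{\lfloor\ell/2\rfloor}$. You have simply supplied more detail (the unimodality check and the explicit strictness argument) than the paper's one-line proof.
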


\begin{proof} It suffices to note that $2^\ell$ is the sum of $\ell+1$ binomial coefficients, the largest of which is ${\ell\choose{\lfloor\ell/2\rfloor}}$.\end{proof}

\end{section}

\begin{section}{Some weight theory}

Let $G$ denote a simple algebraic group of classical type 
defined over an algebraically
closed field $k$ with ${\rm char}(k)\ne p$. That is, we fix a finite-dimensional
vector space $W$ defined over $k$ and equip $W$ with 
either the trivial form or a nondegenerate alternating bilinear
form, or a nondegenerate quadratic form and take $G$ to be 
$(({\rm Isom}(W))')^\circ$. By nondegenerate, we mean that the radical
of the underlying bilinear form is trivial, with one
exception: If ${\rm char}(k)=2$ and $W$ is an odd-dimensional orthogonal
space, we say $W$ is ``nondegenerate'' provided the radical of the underlying 
bilinear form is a non-singular $1$-space. By a slight (conventional) abuse
of notation, 
we write $G={\rm SL}(W)$, ${\rm Sp}(W)$, respectively ${\rm SO}(W)$. When
we wish to cover all cases at once, we simply refer
to $G$ as $\Cl(W)$.

Let $T_G$ be a maximal torus of $G$, with character group $X(T_G)$, 
let $\Phi(G)$ denote the root system of 
$G$ relative to $T_G$, and take 
$\Pi(G)=\{\alpha_1,\alpha_2,\dots,\alpha_\ell\}$ 
to be a fundamental system of $\Phi(G)$, and therefore ${\rm rank}(G)=\ell$. 
We label Dynkin diagrams as in 
\cite{Bourb4-6}. 
Let $\lambda_i\in X(T_G)$ denote the fundamental dominant weight
corresponding to $\alpha_i$. Set $W_G=N_G(T_G)/T_G$, the Weyl group of $G$. 
For a dominant weight $\lambda\in X(T_G)$,
we denote the rational irreducible $kG$-module with highest weight $\lambda$ by 
$V_G(\lambda)$. (All $kG$-modules considered are rational and hence we do
not mention this in what follows.) Recall that a dominant weight
is said to be \emph{restricted} (and the corresponding irreducible
module is referred to as \emph{restricted} as well), if $\lambda = \sum_{i=1}^\ell a_i\lambda_i$ with
$a_i<{\rm char}(k)$ for all $i$.

We require the following definition.

\begin{defn}\label{e(lambda)} For a restricted dominant weight 
$\mu=\sum_{i=1}^\ell a_i\lambda_i\in X(T_G)$, set
\begin{equation*}
e(\mu)=\left\{\begin{array}{ll}
\sum_{i=1}^\ell ia_i,&
\mbox{if $G$ is of type $B_\ell, C_\ell$ or 
$D_\ell$,} \\
\sum_{i=1}^{\ell} {\rm min}(i,\ell+1-i)a_i, & \mbox{if $G$ 
is of type $A_\ell$.} \\
\end{array}\right.
\end{equation*}
\end{defn}
In addition, we set

\begin{equation*}
l_\mu= \left\{\begin{array}{ll}
0,& \mbox{if $a_c=0$ for all $c\leq\frac{\ell+1}{2}$;}\\
\mbox{max $\{c\ |\  1\leq c\leq\frac{\ell+1}{2}$ and $a_c\ne0\}$}, &
 \mbox{otherwise.}\\
\end{array}\right.
\end{equation*}

\begin{equation*}
r_\mu= \left\{\begin{array}{ll}
0,& \mbox{if $a_c=0$ for all $c>\frac{\ell+1}{2}$;}\\
\mbox{max $\{c\ |\  1\leq c<\frac{\ell+1}{2}$ and $a_{\ell+1-c}\ne0\}$}, & 
\mbox{otherwise.}\\
\end{array}\right.
\end{equation*}

We now prove a sequence of lemmas, whose aim is to establish a lower bound for 
$\dim V_G(\lambda)$
in terms of the rank $\ell$ of $G$ and $e(\lambda)$. The first result is taken from \cite{Seitzclass}.

\begin{lemma}\label{Weylgrp-stab}{\rm \cite[1.10]{Seitzclass}} Let $\mu$ 
be a dominant weight for 
$T_G$ and $W_0$ the 
subgroup
of $W_G$ generated by those fundamental reflections corresponding to 
fundamental roots $\alpha\in\Pi(G)$ with $\langle\mu,\alpha\rangle=0$. Then 
$W_0$ is the full stabilizer of $\mu$ in $W_G$. In particular,
the orbit of $\mu$ under the action of $W_G$ contains 
$|W_G:W_0|$ distinct weights.
\end{lemma}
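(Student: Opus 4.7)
The plan is to establish the two inclusions $W_0 \subseteq \Stab_{W_G}(\mu)$ and $\Stab_{W_G}(\mu) \subseteq W_0$; the orbit count is then immediate from the orbit--stabilizer theorem, since the orbit has cardinality $|W_G:\Stab_{W_G}(\mu)|$.

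First, for the easy inclusion, I would note that each generator $s_\alpha$ of $W_0$ acts on $T_G$-characters by $s_\alpha\nu = \nu - \langle \nu,\alpha^\vee\rangle\alpha$. For the generators of $W_0$ we have $\langle\mu,\alpha\rangle=0$ and hence $\langle\mu,\alpha^\vee\rangle = 0$, so $s_\alpha\mu = \mu$. Thus $W_0 \subseteq \Stab_{W_G}(\mu)$.

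For the reverse inclusion, the strategy is a standard induction on the Coxeter length $\ell(w)$ of an element $w \in \Stab_{W_G}(\mu)$, exploiting the dominance of $\mu$. If $\ell(w)=0$ there is nothing to prove. Otherwise, I would choose $\alpha\in\Pi(G)$ with $w^{-1}\alpha < 0$ (such a simple root exists whenever $\ell(w)>0$). On the one hand, dominance of $\mu$ gives $\langle \mu, w^{-1}\alpha\rangle \leq 0$ since $-w^{-1}\alpha$ is a positive root. On the other hand, using that $w$ fixes $\mu$ and the $W_G$-invariance of the inner product, $\langle \mu, w^{-1}\alpha\rangle = \langle w\mu,\alpha\rangle = \langle \mu,\alpha\rangle \geq 0$, again by dominance. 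These two inequalities force $\langle \mu,\alpha\rangle = 0$, so $s_\alpha \in W_0$. The element $s_\alpha w$ then still fixes $\mu$ and has strictly smaller length, so by induction it lies in $W_0$, whence $w = s_\alpha(s_\alpha w)\in W_0$.

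The main obstacle I anticipate is merely bookkeeping: making sure the length argument (and the existence of a simple root $\alpha$ with $w^{-1}\alpha < 0$ when $\ell(w) > 0$) is applied correctly. These are standard facts from the theory of Coxeter groups and root systems as developed in Bourbaki \cite{Bourb4-6}, so the argument really reduces to combining them with the positivity statements coming from dominance of $\mu$. No genuinely new input beyond these classical ingredients should be required, which is consistent with the result being attributed to \cite[1.10]{Seitzclass}.
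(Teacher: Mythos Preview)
Your argument is correct and is the standard proof of this well-known fact about stabilizers of dominant weights. Note, however, that the paper does not actually supply a proof of this lemma: it simply quotes the result from \cite[1.10]{Seitzclass} without argument. So there is no ``paper's proof'' to compare against; you have provided a complete justification where the paper is content to cite.
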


\begin{lemma}\label{lem:old1} Let $\mu\in X(T_G)$ be a restricted 
dominant weight, 
$\mu=\sum_{i=1}^\ell a_i\lambda_i$ and set $d_0={\rm max}\{d\ |\  a_d\ne0\}$. 
Assume $a_{d_0}>1$ and in addition assume $d_0<\frac{\ell}{2}$ for $G$ of 
type $A_\ell$,
$d_0\leq\ell-2$ for $G$ of type $B_\ell$ or $C_\ell$, and $d_0\leq \ell-3$ for
$G$ of type $D_\ell$.
Then the weight $\mu-\alpha_{d_0}$ is dominant; indeed,
$$\mu-\alpha_{d_0}=\sum_{i=1}^{d_0-2} 
a_i\lambda_i+(a_{d_0-1}+1)\lambda_{d_0-1}+(a_{d_0}-2)\lambda_{d_0}+
\lambda_{d_0+1}.$$
Moreover $e(\mu-\alpha_{d_0})=e(\mu)$ and $\mu-\alpha_{d_0}$ occurs
with nonzero multiplicity in the irreducible $kG$-module with highest weight 
$\mu$.
\end{lemma}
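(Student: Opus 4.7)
The plan is to verify each of the three claims in turn, relying on the hypotheses on $d_0$ to put us into the ``generic'' (middle) part of the Dynkin diagram where the Cartan matrix has the simplest form. My first step is to write $\alpha_{d_0}$ as a $\mathbb{Z}$-linear combination of fundamental weights. Because the hypotheses force $d_0$ to lie strictly away from the short-root end in types $B_\ell, C_\ell$ and from the branching node in type $D_\ell$, both $\alpha_{d_0-1}$ and $\alpha_{d_0+1}$ are connected to $\alpha_{d_0}$ by single (unlabeled) edges. Consequently the row of the Cartan matrix indexed by $d_0$ is the standard one, and
$$\alpha_{d_0} = -\lambda_{d_0-1}+2\lambda_{d_0}-\lambda_{d_0+1}.$$
Substituting this into $\mu-\alpha_{d_0}$ and using $a_i=0$ for $i>d_0$ gives exactly the claimed expression. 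Since $a_{d_0}\geq 2$, the coefficient of $\lambda_{d_0}$ is nonnegative, and all other coefficients are manifestly nonnegative, so $\mu-\alpha_{d_0}$ is dominant (the marginal cases $d_0=1$ or $d_0=2$ still produce a dominant weight, as one checks directly).

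Next I compute $e(\mu-\alpha_{d_0})$. For $G$ of type $B_\ell, C_\ell$ or $D_\ell$, the function $e$ is just $\sum_i i\,a_i$, and the change introduced by subtracting $\alpha_{d_0}$ is
$$(d_0-1)\cdot 1 + d_0\cdot(-2) + (d_0+1)\cdot 1 = 0.$$
For $G$ of type $A_\ell$, I need to verify that the three indices $d_0-1,d_0,d_0+1$ all lie in the range where $\min(i,\ell+1-i)=i$; that is, that $d_0+1\leq(\ell+1)/2$. This follows from the hypothesis $d_0<\ell/2$ by a short case split on the parity of $\ell$. Once this is established, the same cancellation as above yields $e(\mu-\alpha_{d_0})=e(\mu)$.

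Finally, for the multiplicity claim I use the standard $\mathfrak{sl}_2$-argument attached to the simple root $\alpha_{d_0}$. Let $v_\mu$ be a highest weight vector in $V_G(\mu)$, and let $e_{d_0},h_{d_0},f_{d_0}$ span the $\mathfrak{sl}_2$ corresponding to $\alpha_{d_0}$ inside the hyperalgebra of $G$. Then $e_{d_0}\cdot v_\mu=0$ and $h_{d_0}\cdot v_\mu=a_{d_0}v_\mu$, and since $a_{d_0}\geq 2>0$, the vector $f_{d_0}\cdot v_\mu$ is nonzero; it carries the weight $\mu-\alpha_{d_0}$. Equivalently, if $u_{-\alpha_{d_0}}(t)$ denotes a one-parameter subgroup for $-\alpha_{d_0}$, then the coefficient of $t$ in $u_{-\alpha_{d_0}}(t)\cdot v_\mu$ is a nonzero weight vector of weight $\mu-\alpha_{d_0}$. (Alternatively one can appeal to Premet's theorem, which is already invoked elsewhere in the paper, since $\mu-\alpha_{d_0}$ is dominant and subdominant to $\mu$.) I do not expect any real obstacle in this proof; the main task is simply the careful case analysis at Step~1 to confirm that the constraints on $d_0$ really do place us in the generic portion of the Dynkin diagram, and the small book-keeping for type $A_\ell$ in Step~2.
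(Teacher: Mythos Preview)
Your proof is correct and follows essentially the same route as the paper: verify the expression for $\alpha_{d_0}$ in fundamental weights (the paper says this is ``easily verified''), check $e$ is preserved by the same parity split on $\ell$ in type $A_\ell$, and establish nonzero multiplicity. The only cosmetic difference is that the paper cites \cite[1.5(c)]{Seitzclass} for the multiplicity, whereas you supply the underlying $\mathfrak{sl}_2$-argument directly; these amount to the same thing.
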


\begin{proof} Note that we interpret $\lambda_0$ as $0$ in the above formula
if $d_0 = 1$. 

That $\mu-\alpha_{d_0}$ has the given form and is therefore
a dominant weight is easily verified. Also \cite[1.5(c)]{Seitzclass}
shows that $\mu-\alpha_{d_0}$ occurs with nonzero multiplicity in $V_G(\mu)$.

Now if $G$ 
is not of
type $A_\ell$, that $e(\mu-\alpha_{d_0})=e(\mu)$ follows directly from the 
definition. For $G$ of type $A_\ell$, since $d_0<\frac{\ell}{2}$, either $\ell$ is even 
and $d_0\leq\frac{\ell-2}{2}$ or $\ell$ is odd and $d_0\leq\frac{\ell-1}{2}$.
A direct calculation establishes the equality
$e(\mu-\alpha_{d_0})=e(\mu)$.\end{proof}

\begin{lemma}\label{lem:wt1} Let 
$\mu=\sum_{i<d} a_i\lambda_i +\lambda_d\in X(T_G)$ be a restricted 
dominant
weight, $\mu\ne\lambda_d$, with $d<\frac{\ell}{2}$ if $G$ is of type $A_\ell$ 
and $d<\ell-2$ otherwise. Then there exists a weight $\nu\in X(T_G)$,
 subdominant to 
$\mu$,
of the form $\nu=\sum_{j<d}b_j\lambda_j+\lambda_{d+1}$, with $e(\nu)=e(\mu)$.
Moreover, the weight $\nu$ occurs with nonzero multiplicity in the irreducible 
$kG$-module with highest weight $\mu$.
\end{lemma}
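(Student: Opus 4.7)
The plan is to construct $\nu$ explicitly by subtracting a carefully chosen sum of consecutive simple roots from $\mu$. Since $\mu \neq \lambda_d$, the integer $d_1 := \max\{i < d \mid a_i \neq 0\}$ is well-defined; I take
\[
\nu := \mu - (\alpha_{d_1} + \alpha_{d_1+1} + \cdots + \alpha_d).
\]
The hypothesis on $d$, namely $d < \ell/2$ in type $A_\ell$ and $d < \ell - 2$ otherwise, is exactly what is needed to guarantee that each simple root $\alpha_j$ with $d_1 \leq j \leq d$ lies strictly inside the ``$A$-type segment'' of the Dynkin diagram, so that $\alpha_j = -\lambda_{j-1} + 2\lambda_j - \lambda_{j+1}$ (with $\lambda_0 = 0$ by convention), even in types $B_\ell, C_\ell, D_\ell$ where the ``non-standard'' Cartan rows sit at the last one or two nodes.

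Granted this, the sum telescopes to $-\lambda_{d_1-1} + \lambda_{d_1} + \lambda_d - \lambda_{d+1}$, and substituting back (using $a_j = 0$ for $d_1 < j < d$ and $a_d = 1$) yields
\[
\nu = \sum_{i < d_1 - 1} a_i \lambda_i + (a_{d_1-1}+1)\lambda_{d_1-1} + (a_{d_1}-1)\lambda_{d_1} + \lambda_{d+1}.
\]
Since $a_{d_1} \geq 1$, all coefficients are nonnegative, so $\nu$ is dominant, has the required shape $\sum_{j<d} b_j\lambda_j + \lambda_{d+1}$, and is subdominant to $\mu$ by construction.

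The identity $e(\nu) = e(\mu)$ will follow from a short arithmetic check. In types $B_\ell, C_\ell, D_\ell$ it is immediate from $e(\mu) = \sum i a_i$, as the contributions telescope: $(d_1-1)(a_{d_1-1}+1) + d_1(a_{d_1}-1) + (d+1) = (d_1-1)a_{d_1-1} + d_1 a_{d_1} + d$. In type $A_\ell$, the stronger assumption $d < \ell/2$ ensures every index $i$ appearing in either $\mu$ or $\nu$ satisfies $i \leq d+1 \leq (\ell+1)/2$, so $\min(i, \ell+1-i) = i$ and the same telescoping applies. For the final claim, that $\nu$ occurs with nonzero multiplicity in $V_G(\mu)$, I would invoke \cite[1.5(c)]{Seitzclass}, the same tool used in the proof of Lemma \ref{lem:old1}, which guarantees nonzero multiplicity for dominant weights subdominant to a restricted highest weight. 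The only bookkeeping point to watch is the edge case $d_1 = 1$, where the summand $(a_{d_1-1}+1)\lambda_{d_1-1}$ must be read as $0$ under the convention $\lambda_0 = 0$; this is a notational subtlety rather than a genuine obstacle, and beyond it the proof is essentially a clean root-system computation.
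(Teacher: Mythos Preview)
Your construction of $\nu$ and the verification that it is dominant with $e(\nu)=e(\mu)$ are exactly what the paper does (the paper writes $d_0$ for your $d_1$ and leaves the telescoping implicit), so the core of the argument is correct and identical.

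The one issue is your justification for nonzero multiplicity. You invoke \cite[1.5(c)]{Seitzclass} and describe it as guaranteeing ``nonzero multiplicity for dominant weights subdominant to a restricted highest weight''. That is not what \cite[1.5(c)]{Seitzclass} says: as its use in Lemma~\ref{lem:old1} indicates, it handles the subtraction of a \emph{single} simple root $\alpha_i$ with $\langle\mu,\alpha_i\rangle>0$. Here you are subtracting a string $\alpha_{d_1}+\cdots+\alpha_d$, and since $a_j=0$ for $d_1<j<d$, the intermediate steps are not dominant, so you cannot simply iterate \cite[1.5(c)]{Seitzclass}. The paper instead appeals to \cite[8.6]{Seitzclass}, which is precisely the result needed for such strings. (Premet's theorem \cite{Premet88}, which you may have had in mind, would also suffice and is what the paper invokes in the subsequent Lemma~\ref{lem:wt2}.) So your citation and its description need to be corrected, but the argument itself is fine.
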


\begin{proof} Set $d_0={\rm max}\{i<d\ |\  a_i\ne0\}$. Set 
$\nu=\mu-\alpha_{d_0}-\alpha_{d_0+1}-\cdots-\alpha_d$. Then as in the above
 lemma
one verifies that $\nu$ is dominant and satisfies $e(\nu)=e(\mu)$. Moreover,
\cite[8.6]{Seitzclass} shows that $\nu$ occurs with nonzero multiplicity in $V_G(\mu)$.\end{proof}

\begin{lemma}\label{lem:wt2} Let 
$\mu=\sum_{i<d}a_i\lambda_i+\lambda_d\in X(T_G)$ be a restricted 
dominant weight, 
$\mu\ne\lambda_d$. Assume further that $d<\frac{\ell}{2}$ if $G$ is of type 
$A_\ell$, and $d<\ell-2$ otherwise.
Set $m_0={\rm min}\{e(\mu), \lfloor\frac{\ell}{2}\rfloor\}$, if $G$ is of type 
$A_\ell$, and 
set $m_0={\rm min}\{e(\mu), \ell-3\}$, otherwise. Then for each
$d\leq m\leq m_0$, there exists
a weight $\nu_m$, subdominant to $\mu$ of the form 
$\nu_m=\sum_{i<d}b_i\lambda_i+\lambda_m$, with $e(\nu_m)=e(\mu)$. Moreover,
each of the weights $\nu_m$ occurs with nonzero multiplicity in the 
irreducible $kG$-module
with highest weight $\mu$.
\end{lemma}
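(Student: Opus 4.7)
My plan is to induct on $m$ throughout the range $d \leq m \leq m_0$, taking $\nu_d := \mu$ as the trivial base case and then manufacturing $\nu_m$ from $\nu_{m-1}$ by the same construction that drives Lemma~\ref{lem:wt1}. More precisely, if $\nu_{m-1} = \sum_{j<d} b_j \lambda_j + \lambda_{m-1}$ has been obtained with $e(\nu_{m-1}) = e(\mu)$, I would set $d_0' = \max\{j < m-1 \mid b_j \neq 0\}$ and define
\[
\nu_m := \nu_{m-1} - \alpha_{d_0'} - \alpha_{d_0'+1} - \cdots - \alpha_{m-1}.
\]
The first step of the induction, $m = d+1$, is nothing but Lemma~\ref{lem:wt1} applied to $\mu$ directly.

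For the inductive step there are three items to verify. First, that $d_0'$ is well defined, i.e., $\nu_{m-1} \neq \lambda_{m-1}$; this I would settle by an $e$-value comparison, noting that $e(\lambda_{m-1}) = m - 1$ in every type at hand (using $m-1 \leq \lfloor \ell/2 \rfloor - 1$ in type $A_\ell$ to reduce the minimum), while $e(\nu_{m-1}) = e(\mu) \geq m$ by the defining bound $m \leq m_0 \leq e(\mu)$. Second, the position bound required by Lemma~\ref{lem:wt1} applied to $\nu_{m-1}$ — namely $m-1 < \ell/2$ in type $A_\ell$ or $m-1 < \ell - 2$ otherwise — which follows immediately from $m \leq m_0$ and the upper bound imposed on $m_0$. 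Third, that $\nu_m$ actually has the prescribed shape $\sum_{j<d} b_j' \lambda_j + \lambda_m$ and again satisfies $e(\nu_m) = e(\mu)$.

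I expect the third item to be the main technical obstacle, since one must show that after subtracting $\alpha_{d_0'} + \cdots + \alpha_{m-1}$ the coefficients of $\nu_m$ at positions $d, d+1, \ldots, m-1$ all vanish. The key inputs are the inequality $d_0' \leq d - 1$, forced by the shape of $\nu_{m-1}$, together with the $A$-type telescoping identity
\[
\sum_{i=d_0'}^{m-1} \alpha_i \;=\; -\lambda_{d_0'-1} + \lambda_{d_0'} + \lambda_{m-1} - \lambda_m,
\]
which remains valid in types $B_\ell, C_\ell, D_\ell$ because the bound $m-1 \leq \ell - 4$ keeps all participating simple roots inside the long $A$-type sub-diagram. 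This identity pins down exactly which coefficients change (only at positions in $\{d_0'-1, d_0', m-1, m\}$) and makes the $e$-invariance a one-line arithmetic check. Dominance of $\nu_m$ is then immediate, subdominance to $\mu$ follows by transitivity through $\mu \geq \nu_{d+1} \geq \cdots \geq \nu_m$, and nonzero multiplicity of $\nu_m$ in $V_G(\mu)$ follows from the same invocation of \cite[8.6]{Seitzclass} that underlies Lemma~\ref{lem:wt1}.
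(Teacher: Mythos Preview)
Your inductive construction of the chain $\mu = \nu_d \geq \nu_{d+1} \geq \cdots \geq \nu_m$ is exactly the paper's approach, and your verification of the three items (well-definedness of $d_0'$, the position bound for applying Lemma~\ref{lem:wt1}, and the shape and $e$-invariance via the telescoping identity) is correct and, if anything, more explicit than the paper's account.

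There is, however, a genuine gap in the multiplicity claim. The reference \cite[8.6]{Seitzclass} is a statement about the \emph{highest weight}: it tells you that a weight of the form $\lambda - (\alpha_i + \cdots + \alpha_j)$ occurs in $V_G(\lambda)$ when $\lambda$ itself has positive coefficients at both endpoints. Applied at step $m$ of your induction, with $\nu_{m-1}$ playing the role of the highest weight, it yields only that $\nu_m$ occurs in $V_G(\nu_{m-1})$, \emph{not} in $V_G(\mu)$. Since $V_G(\nu_{m-1})$ need not sit inside $V_G(\mu)$ in any useful way, the conclusion does not propagate up the chain. Concretely, $\mu - \nu_m$ is a sum of $m-d$ overlapping root strings, not a single one, so a direct single invocation of \cite[8.6]{Seitzclass} with the pair $(\mu,\nu_m)$ is not available either.

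The paper closes this gap differently: it observes that every root subtracted along the chain lies in the type $A$ subsystem with base $\{\alpha_1,\dots,\alpha_{\lfloor\ell/2\rfloor}\}$ (respectively $\{\alpha_1,\dots,\alpha_{\ell-2}\}$), so the entire descent from $\mu$ to $\nu_m$ takes place inside an irreducible module for a Levi factor of type $A$. One then invokes the main theorem of \cite{Premet88}, which in type $A$ guarantees that every dominant weight below the highest weight occurs with nonzero multiplicity. This is the step you are missing; replacing your final sentence by this Levi-plus-Premet argument makes the proof complete.
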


\begin{proof} We proceed by induction on $m$, noting that if $m=d$, we may 
take $\nu_m=\mu$. If $m=d+1$, the result follows from Lemma~\ref{lem:wt1}. For 
$m_0\geq m>d+1$, we suppose that there exists a weight 
$\nu_{m-1}=\sum_{j<d}b_j\lambda_j+\lambda_{m-1}$, subdominant to $\mu$ 
with $e(\nu_{m-1})=e(\mu)$. Note that $\nu_{m-1}\ne\lambda_{m-1}$, else
$e(\nu_{m-1})=m-1=e(\mu)$, contradicting $e(\mu)\geq m_0 \geq m$. Moreover 
$m-1<\frac{\ell}{2}$ if $G$ is of type 
$A_\ell$ and $m-1<\ell-2$ otherwise. So by Lemma~\ref{lem:wt1}, there exists
a weight $\nu_m=\sum_{j<d}b_j\lambda_j+\lambda_m$, subdominant to 
$\nu_{m-1}$, and therefore subdominant to $\mu$, with $e(\nu_{m})=e(\mu)$.
The existence of the subdominant weight $\nu_m$ now follows by induction.

To see that the weight $\nu_m$ occurs with nonzero multiplicity in the irreducible
$kG$-module $V_G(\mu)$, one must invoke the main result of \cite{Premet88}
and note that in all cases, the weights are obtained within a composition factor
for a Levi subgroup of type
$A$ whose root system has base 
$\{\alpha_1,\ldots,\alpha_{\lfloor\frac{\ell}{2}\rfloor}\}$ 
(respectively $\{\alpha_1,\dots, \alpha_{\ell-2}\}$), for $G$ of type $A_\ell$ 
(resp. not of type $A_\ell$).\end{proof}

\begin{lemma}\label{lem:wt3} Let $\mu\in X(T_G)$ be a restricted 
dominant weight with 
$r_\mu=0$; moreover assume 
$e(\mu)< \frac{\ell+2}{2}$ if $G$ is of type $A_\ell$, and 
$e(\mu)<\ell-2$ otherwise.
 Then the fundamental dominant weight 
$\lambda_{e(\mu)}$ is 
subdominant to $\mu$. Moreover, $\lambda_{e(\mu)}$ occurs with nonzero 
multiplicity
in the irreducible $kG$-module with highest weight $\mu$.
\end{lemma}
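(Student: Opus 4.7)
The plan is to mimic the strategy of Lemmas~\ref{lem:old1}, \ref{lem:wt1}, and \ref{lem:wt2}: first I would establish that $\lambda_{e(\mu)}$ is subdominant to $\mu$, then invoke Premet's theorem (as in the proof of Lemma~\ref{lem:wt2}) to conclude nonzero multiplicity in $V_G(\mu)$. Set $d_0:=\max\{i:a_i\neq 0\}$; the hypothesis $r_\mu=0$ gives $d_0\leq\lfloor(\ell+1)/2\rfloor$, so in every Dynkin type $e(\mu)=\sum_i ia_i\geq d_0$ (in type $A_\ell$ this uses $i\leq d_0\leq (\ell+1)/2$ so that $\min(i,\ell+1-i)=i$). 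If $\mu=\lambda_{d_0}$, then $e(\mu)=d_0$ and $\lambda_{e(\mu)}=\mu$, settling the claim trivially; so assume $\mu\neq\lambda_{d_0}$, whence $e(\mu)>d_0$.

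Suppose first that $a_{d_0}=1$. Then $\mu$ has the form required by Lemma~\ref{lem:wt2} with $d=d_0$. The rank condition of that lemma ($d<\ell/2$ in type $A$, $d<\ell-2$ otherwise) holds since $d_0\leq e(\mu)-1<\ell/2$ (resp.\ $<\ell-3$) by our hypothesis on $e(\mu)$, and the same bound forces the parameter $m_0$ of Lemma~\ref{lem:wt2} to equal $e(\mu)$. Applying Lemma~\ref{lem:wt2} with $m=m_0=e(\mu)$ produces a subdominant weight $\nu_{m_0}=\sum_{j<d_0}b_j\lambda_j+\lambda_{e(\mu)}$ occurring with nonzero multiplicity in $V_G(\mu)$ and satisfying $e(\nu_{m_0})=e(\mu)$. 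Since each $j<d_0\leq\lfloor(\ell+1)/2\rfloor$ and $e(\mu)\leq\lfloor(\ell+1)/2\rfloor$, the $e$-formula collapses to $\sum_{j<d_0}jb_j+e(\mu)=e(\mu)$, forcing $b_j=0$ for all $j\geq 1$. Hence $\nu_{m_0}=\lambda_{e(\mu)}$, as required.

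When instead $a_{d_0}\geq 2$, I first apply Lemma~\ref{lem:old1} to reduce to the previous case. Since $e(\mu)\geq 2d_0$, the bound on $e(\mu)$ forces $d_0<\ell/2$ (type $A$) respectively $d_0\leq\ell-3$ (types $B_\ell$, $C_\ell$, $D_\ell$), so the rank hypothesis of Lemma~\ref{lem:old1} is met. The resulting weight $\mu':=\mu-\alpha_{d_0}$ is subdominant to $\mu$, satisfies $e(\mu')=e(\mu)$, and occurs with nonzero multiplicity in $V_G(\mu)$; from its explicit form one reads off that $d_0(\mu')=d_0+1\leq\lfloor(\ell+1)/2\rfloor$, so $r_{\mu'}=0$ and $\mu'$ satisfies the hypotheses of the current lemma, with $a'_{d_0+1}=1$. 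The first case, applied to $\mu'$, then shows $\lambda_{e(\mu)}\preceq\mu'$, and transitivity of dominance gives $\lambda_{e(\mu)}\preceq\mu$. For the multiplicity in $V_G(\mu)$, I would invoke Premet's theorem directly: $\mu$ and $\lambda_{e(\mu)}$ both have support within a common type-$A$ Levi subsystem of $G$ (contained in the Levi subsystem highlighted in the proof of Lemma~\ref{lem:wt2}), so Premet's theorem guarantees that $\lambda_{e(\mu)}$ occurs as a weight of $V_G(\mu)$ with nonzero multiplicity.

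The main obstacle is the careful verification of the numerical rank inequalities at the boundary of the hypothesis $e(\mu)<(\ell+2)/2$ (resp.\ $<\ell-2$), especially for small $\ell$ or when $a_{d_0}=2$ and $d_0$ sits near its upper bound; a secondary subtlety is the correct invocation of Premet's theorem in the second case, where Lemma~\ref{lem:wt2} applied to $\mu'$ only yields $\lambda_{e(\mu)}$ as a weight of $V_G(\mu')$, so Premet must be invoked afresh for $V_G(\mu)$ from the transitively established subdominance.
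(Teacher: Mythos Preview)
Your strategy coincides with the paper's: dispose of the case $\mu=\lambda_{d_0}$, reduce via Lemma~\ref{lem:old1} when $a_{d_0}\geq 2$, then push the top index up to $e(\mu)$ using Lemma~\ref{lem:wt2}, and finally invoke Premet. The non-$A$ types go through exactly as you say.

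There is, however, a genuine gap in type $A_\ell$ at the boundary $\ell$ odd, $e(\mu)=(\ell+1)/2$. You assert that the hypothesis $e(\mu)<(\ell+2)/2$ forces the parameter $m_0$ of Lemma~\ref{lem:wt2} to equal $e(\mu)$; but $m_0=\min\{e(\mu),\lfloor\ell/2\rfloor\}$, and when $\ell$ is odd with $e(\mu)=(\ell+1)/2$ one has $\lfloor\ell/2\rfloor=(\ell-1)/2<e(\mu)$, so $m_0=(\ell-1)/2$ and Lemma~\ref{lem:wt2} only delivers a subdominant weight of the form $\sum_{j<d_0}b_j\lambda_j+\lambda_{(\ell-1)/2}$, not $\lambda_{(\ell+1)/2}$. (Concretely, the $e$-count forces this weight to be $\lambda_1+\lambda_{(\ell-1)/2}$.) The paper handles exactly this case by an extra step: starting from such a weight $\eta=\sum c_i\lambda_i+\lambda_{(\ell-1)/2}$ with $e(\eta)=(\ell+1)/2$, one takes the maximal $t$ with $c_t\ne 0$ and subtracts the root string $\alpha_t+\alpha_{t+1}+\cdots+\alpha_{(\ell-1)/2}$, which produces the desired $\lambda_{(\ell+1)/2}$. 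You flagged the boundary as a worry in your final paragraph, but the claim in the body of your argument is actually false there and needs this additional move to be repaired; the same patch is needed after your reduction in the $a_{d_0}\geq 2$ case, since $e(\mu')=e(\mu)$ can still hit $(\ell+1)/2$.
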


\begin{proof} Set $\mu=\sum_{i\leq m}a_i\lambda_i$, where 
$m\leq\frac{\ell}{2}$ and $a_m\ne0$. If $\mu=\lambda_m$, then $e(\mu)=m$ and 
the result holds.

If $a_m>1$, then $2m\leq e(\mu)\leq\frac{\ell+1}{2}$. Note that if $\ell<4$,
then $\lambda = 2\lambda_1$ and $\lambda-\alpha_1$ is the desired subdominant weight. So we may assume 
$\ell\geq 4$, so we have $m<\frac{\ell-1}{2}$ if $G=A_\ell$ and $m\leq\ell-2$
otherwise; hence by Lemma~\ref{lem:old1}, 
$\mu-\alpha_m$ is subdominant to $\mu$ and $e(\mu-\alpha_m)=e(\mu)$. Moreover,
if $a_m>1$ and $\mu-\alpha_m$ is a fundamental dominant weight then
$\mu=2\lambda_1$, $\mu-\alpha_1=\lambda_2$ which is indeed $\lambda_{e(\mu)}$ 
as desired.

So we now suppose without loss of generality the existence of a subdominant 
weight $\nu$ satisfying $\nu=\sum_{i\leq m}b_i\lambda_i+\lambda_d$, where 
$m<d\leq\frac{\ell+2}{2}$, $\nu\ne\lambda_d$ and $e(\nu)=e(\mu)$.

Consider first the case where $G$ is of type $B_\ell$, $C_\ell$ or $D_\ell$.
Here we have $e(\nu)<\ell-2$ and so $d<\ell-2$. By Lemma~\ref{lem:wt2}, 
there exists a subdominant weight 
$\nu_r$ of the form $\nu_r=\sum_{i<d}c_i\lambda_i+\lambda_r$, where 
$r={\rm min}\{e(\nu),\ell-3\}$ and $e(\nu_r)=e(\nu)=e(\mu)$. But our assumption
$e(\mu)\leq\ell-3$ implies that $r=e(\mu)$ and so $\nu_r=\lambda_r$ is 
the desired weight. Moreover, as in the previous proof, the weights are 
obtained via the action of a Levi subgroup of type $A$, and hence occur
with nonzero multiplicity in the irreducible $kG$-module with highest weight $\mu$, by \cite{Premet88}.

We now turn to the case where $G$ is of type $A_\ell$.
Here we have $e(\nu) = e(\mu)<\frac{\ell+2}{2}$. It suffices to establish
the existence of the subdominant weight, as \cite{Premet88} then yields the 
result.

Suppose $\ell$ is odd and $e(\nu)=\frac{\ell+1}{2}$. Then $e(\nu)>d$ implies that 
$d<\frac{\ell}{2}$. Moreover, 
$r={\rm min}\{e(\nu),\lfloor\frac{\ell}{2}\rfloor\}=\frac{\ell-1}{2}$ and so by 
Lemma~\ref{lem:wt2}, 
there exists a weight $\eta$, subdominant to $\nu$, of the form
$\eta=\sum_{i<d}c_i\lambda_i+\lambda_{(\ell-1)/2}$, with 
$e(\eta)=e(\nu)=\frac{\ell+1}{2}$. Since 
$e(\lambda_{(\ell-1)/2})=\frac{\ell-1}{2}$, 
$\eta\ne\lambda_{(\ell-1)/2}$. So $c_i\ne0$ for some $i<d$.
Take $t$ maximal such that $c_t\ne0$. Then 
$\rho=\eta-(\alpha_t+\alpha_{t+1}+\cdots+\alpha_{(\ell-1)/2})$ is subdominant to $\eta$
and one checks that
$e(\rho)=e(\eta)=e(\nu)=\frac{\ell+1}{2}$.
But this then implies that $\rho=\lambda_{(\ell+1)/2}$, satisfying the conclusion
of the result.

Now suppose $\ell$ is even and $e(\nu)=\frac{\ell}{2}$. If $d=\frac{\ell+2}{2}$
or $\ell/2$, then $e(\nu)>\ell/2$. So we have that $d<\ell/2$. Moreover, 
$r={\rm min}\{e(\nu),\lfloor\frac{\ell}{2}\rfloor\}=
e(\nu)=\frac{\ell}{2}$ and by Lemma~\ref{lem:wt2}, there exists a weight 
$\eta$, subdominant to $\nu$ of the form 
$\eta=\sum_{i<d}d_i\lambda_i+\lambda_{\ell/2}$, 
with $e(\eta)=e(\mu)=\frac{\ell}{2}$. But this then implies that $\eta=
\lambda_\frac{\ell}{2}$, which gives the result.

The two limiting cases having been handled, we can now assume
that $e(\nu)<\frac{\ell}{2}$, so $d<\frac{\ell}{2}$ and 
$r={\rm min}\{e(\nu),\lfloor\frac{\ell}{2}\rfloor\}=e(\nu)$, and 
by Lemma~\ref{lem:wt2}, there exists a 
weight $\nu_r$, subdominant
to $\nu$ of the form $\nu_r=\sum_{i<d}d_i\lambda_i+\lambda_r$ with 
$e(\nu_r)=e(\nu)$. But $r=e(\nu)$ then gives $\nu_r=\lambda_r$, the desired 
subdominant
fundamental weight.\end{proof}

We can now establish a bound for $\dim V_G(\lambda)$ in terms of $\dim W$ 
and $e(\lambda)$.

\begin{prop}\label{easybounds} Let $\lambda\in X(T_G)$ be a restricted 
dominant weight, set $e: = e(\lambda)$, 
and assume $e\leq\frac{\ell+1}{2}$ if $G$ is of type $A_\ell$, respectively 
$e\leq\ell-3$, otherwise. 
Then
 $$\dim V_G(\lambda)\geq {{\ell+1}\choose e}, \mbox{ if }G\mbox{ is of type }A_\ell,$$ and
$$\dim V_G(\lambda)\geq 2^e{\ell\choose e},\mbox{ if } G\mbox{ is of type } 
B_\ell, C_\ell,\mbox{ or } D_\ell.$$ 
Moreover, in all cases,
$$\dim V_G(\lambda)\leq(\dim W)^e.$$
\end{prop}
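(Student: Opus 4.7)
I will prove the lower and upper bounds separately.

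For the lower bound, my strategy is to exhibit a dominant weight $\mu$ occurring with positive multiplicity in $V_G(\lambda)$ whose Weyl-group orbit has size at least the claimed bound; Lemma~\ref{Weylgrp-stab} then identifies the stabilizer as a parabolic subgroup of $W_G$, and Weyl-invariance of weight multiplicities gives $\dim V_G(\lambda) \geq |W_G \cdot \mu|$. The main tool is Lemma~\ref{lem:wt3}: when $r_\lambda = 0$, its numerical hypotheses are implied by those of the Proposition, and it asserts that $\lambda_e$ is subdominant to $\lambda$ and occurs with positive multiplicity in $V_G(\lambda)$. Taking $\mu = \lambda_e$, the stabilizer is the maximal parabolic $\langle s_j : j \neq e \rangle$, and direct computation of Weyl-group orders yields $|W_G \cdot \lambda_e| = \binom{\ell+1}{e}$ for type $A_\ell$ and $2^e \binom{\ell}{e}$ for types $B_\ell, C_\ell, D_\ell$ (valid since $e \leq \ell-3$ gives $e \leq \ell-2$). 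When $r_\lambda \neq 0$ (relevant mainly in type $A_\ell$), $\lambda_e$ need not be subdominant to $\lambda$; instead I iteratively apply Lemmas~\ref{lem:old1}, \ref{lem:wt1}, and~\ref{lem:wt2} to produce a dominant subdominant weight $\mu = \sum_j \lambda_{c_j}$ (a sum of distinct fundamental weights) with $e(\mu) = e(\lambda)$, and take $\mu$ itself as the target weight. The orbit of $\mu$ is a multinomial $(\ell+1)!/\prod_j g_j!$, where the $g_j$ are the gaps in the support of $\mu$, and a combinatorial estimate in the style of Lemma~\ref{lem:bc-app}(2) shows that this multinomial is at least the claimed bound.

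For the upper bound, $V_G(\lambda)$ is a composition factor of $\bigotimes_{i=1}^\ell V_G(\lambda_i)^{\otimes a_i}$, since $\lambda$ occurs in this tensor product as the sum of highest weights. For type $A_\ell$, each $V_G(\lambda_i) \cong \Lambda^i W$ is minuscule of dimension $\binom{\ell+1}{i}$, and using the duality $\Lambda^{\ell+1-i} W \cong (\Lambda^i W)^*$ one obtains $\dim V_G(\lambda_i) \leq (\dim W)^{\min(i,\ell+1-i)}$. For types $B_\ell, C_\ell, D_\ell$, the hypothesis $e \leq \ell - 3$ excludes the spin and near-spin fundamentals from $\lambda$, and each remaining $V_G(\lambda_i)$ is a composition factor of $\Lambda^i W \subseteq W^{\otimes i}$, giving $\dim V_G(\lambda_i) \leq (\dim W)^i$. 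Multiplying over $i$ with multiplicities $a_i$ yields $\dim V_G(\lambda) \leq (\dim W)^{e(\lambda)}$.

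The principal obstacle is the combinatorial analysis of the lower bound when $\lambda$ has support in both halves of the Dynkin diagram for type $A_\ell$: the weight $\lambda_e$ is then typically not subdominant to $\lambda$, and the construction of an adequate subdominant substitute via the moves of Lemmas~\ref{lem:old1}--\ref{lem:wt2}, together with the multinomial comparison to $\binom{\ell+1}{e}$, requires careful case analysis.
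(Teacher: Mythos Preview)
Your proposal is correct and takes essentially the same approach as the paper: the upper bound via the tensor product of exterior powers is identical, and for the lower bound both you and the paper pass to a suitable subdominant weight and count its Weyl-group orbit via Lemma~\ref{Weylgrp-stab}. For the two-sided type-$A_\ell$ case the paper is a bit more explicit than your outline: rather than producing a general sum $\sum_j \lambda_{c_j}$ and estimating an arbitrary multinomial, it splits $\lambda=\lambda_\ell+\lambda_r$ along the midpoint of the diagram, applies Lemma~\ref{lem:wt3} (using duality for $\lambda_r$) to each half, and obtains the single subdominant weight $\lambda_{e(\lambda_\ell)}+\lambda_{\ell+1-e(\lambda_r)}$, whose orbit already has size $\tfrac{(\ell+1)!}{e(\lambda_\ell)!\,e(\lambda_r)!\,(\ell+1-e)!}=\binom{\ell+1}{e}\binom{e}{e(\lambda_\ell)}\geq\binom{\ell+1}{e}$, so your ``combinatorial estimate in the style of Lemma~\ref{lem:bc-app}(2)'' reduces to the trivial $\binom{e}{a}\geq 1$.
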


\begin{proof} We first establish the upper bound. Recall that the irreducible 
$kG$-module with highest weight $\lambda_i$ occurs as a composition factor in $\Lambda^iW$, and therefore
as a composition factor of $W^{\otimes i}$, 
for $i\leq\frac{\ell+1}{2}$, if $G$ is of type $A_\ell$, and for $i\leq\ell-3$
otherwise.  
For $G$ of type $A_\ell$, the $kG$-module of highest weight $\lambda_i$, 
$i>\frac{\ell+1}{2}$,
occurs as a composition factor of $\Lambda^{\ell-i+1}(W^*)$,
and therefore as a composition factor of $(W^*)^{\otimes(\ell-i+1)}$. 
Now if $G$ has type $B_\ell$, $C_\ell$ or $D_\ell$ and $e\leq\ell-3$,
then $\lambda=\sum_{i=1}^{\ell-3} a_i\lambda_i$. Combining all of these remarks,
we see that
$V_G(\lambda)$ 
occurs as a composition factor of the $kG$-module 
$\otimes_{i=1}^\ell(\Lambda^i W)^{\otimes a_i}$ 
(or $\otimes_{i=1}^{l_\lambda}(\Lambda^i W)^{\otimes a_i}
\otimes\otimes_{i=1}^{r_\lambda}(\Lambda^{\ell-i+1} W^*)^{\otimes a_{\ell-i+1}}$,
where $l_\lambda$ and $r_\lambda$ are as in Definition~\ref{e(lambda)}).
We then conclude that $\dim V_G(\lambda)\leq(\dim W)^e$, as desired.

Now turn to the lower bound. 
If $\lambda$ is a fundamental dominant 
weight, then Lemma~\ref{Weylgrp-stab} gives the desired lower bound.
So we suppose $\lambda$ is not a fundamental dominant weight. If $G$ is of type 
$A_\ell$, write $\lambda=\lambda_\ell+\lambda_r$, where 
$\lambda_\ell=\sum_{i\leq l_\lambda}a_i\lambda_i$ and 
$\lambda_r=\sum_{i> l_\lambda}b_i\lambda_i$. 

If $G$ is of type $B_\ell$, $C_\ell$ or $D_\ell$, or if $G$ is of type 
$A_\ell$ and $\lambda=\lambda_\ell$ or 
$\lambda=\lambda_r$,
 then Lemma~\ref{lem:wt2} implies
that $\lambda_e$ is a subdominant weight occurring in the irreducible module 
$V_G(\lambda)$, (or in
$V_G(\lambda)^*$)
and hence we conclude as above. In the remaining case, when $G$ is of type 
$A_\ell$ and 
$\lambda \ne\lambda_\ell$ and $\lambda\ne\lambda_r$,
we apply Lemma~\ref{lem:wt2}
to each of the weights $\lambda_\ell$ and $\lambda_r$. Note that 
$e = e(\lambda_\ell)+e(\lambda_r)$ and hence
we find that $\lambda_{e(\lambda_r)} +\lambda_{e(\lambda_\ell)}$ is a 
subdominant weight
occurring in the irreducible $kG$-module $V_G(\lambda)$. Here we use Lemmas~\ref{lem:bc-app}(2) and 
\ref{Weylgrp-stab} to
see that 
$\dim V_G(\lambda)\geq \frac{(\ell+1)!}{e(\lambda_\ell)!e(\lambda_r)!(\ell+1-e)!}\geq{{\ell+1}\choose{e}}$.\end{proof}

\begin{lemma}\label{lowerbd1} Let $G$ be of type $A_\ell$, and
let $\lambda\in X(T_G)$ be a restricted dominant weight. If 
$e(\lambda)>(\ell+1)/2$, then 
$\dim V_G(\lambda)\geq {{\ell+1}\choose{\lfloor\ell/2\rfloor}}$.

\end{lemma}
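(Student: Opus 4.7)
My plan is to exhibit a dominant weight $\mu$ occurring in $V_G(\lambda)$ whose Weyl group orbit has size at least $\binom{\ell+1}{\lfloor\ell/2\rfloor}$; together with Lemma~\ref{Weylgrp-stab}, this yields the required dimension bound.

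The key combinatorial observation is: for a dominant weight $\mu$ whose support contains any index $j$ with $\lfloor\ell/2\rfloor\leq j\leq\lceil(\ell+2)/2\rceil$, the $W_G$-stabilizer of $\mu$ is a Young subgroup of $S_{\ell+1}$ whose block sizes partition $\{1,\dots,\ell+1\}$ at the support points; using the elementary inequality $a!\,b!\leq(a+b)!$ repeatedly one finds the factorial product of the blocks to be bounded by $j!(\ell+1-j)!$, so the orbit size is at least $\binom{\ell+1}{j}\geq\binom{\ell+1}{\lfloor\ell/2\rfloor}$. The proof then reduces to producing such a $\mu$.

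If $\lambda$ already contains a middle-range index in its support, set $\mu=\lambda$. Otherwise, by replacing $\lambda$ with $\lambda^*$ if needed (which preserves $e(\lambda)$ and $\dim V_G(\lambda)$), we may assume $l_\lambda\geq 1$. If $r_\lambda=0$, then $\lambda$ has only left-half support with $l_\lambda\leq\lfloor\ell/2\rfloor-1<\ell/2$; applying Lemma~\ref{lem:old1} to normalize the rightmost coefficient to $1$ when needed (if this shift already reaches the middle range the argument is done), we then invoke Lemma~\ref{lem:wt2} with $m=\lfloor\ell/2\rfloor$, valid since $m_0=\min(e(\lambda),\lfloor\ell/2\rfloor)=\lfloor\ell/2\rfloor$ because $e(\lambda)>(\ell+1)/2$; the resulting weight has $\lfloor\ell/2\rfloor$ in its support and occurs in $V_G(\lambda)$ with nonzero multiplicity.

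In the remaining both-sided case ($l_\lambda\geq 1$ and $r_\lambda\geq 1$, with neither $\lfloor\ell/2\rfloor$ nor $\lfloor\ell/2\rfloor+1$ in the support), we argue as follows. If every $a_j\in\{0,1\}$, then writing the support as $\{j_1<\dots<j_k\}$, the hypothesis $e(\lambda)>(\ell+1)/2$ yields in particular $j_1+(\ell+1-j_k)>(\ell+1)/2$, equivalently $j_k-j_1<(\ell+1)/2$; Lemma~\ref{lem:bc-app}(2) with $m=\ell+1$, $a=j_1$, $b=\ell+1-j_k$ (whose hypotheses are then satisfied) shows that the orbit of $\lambda$ itself already has the required size. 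Otherwise some $a_j\geq 2$, and we pass to $\lambda-\alpha_j$: this is dominant, and $\langle\lambda,\alpha_j^\vee\rangle=a_j\geq 2$ ensures via standard $\mathfrak{sl}_2$-theory that $\lambda-\alpha_j$ is a weight of $V_G(\lambda)$; moreover $j$ cannot lie in $\{\lfloor\ell/2\rfloor,\lfloor\ell/2\rfloor+1\}$ (else the first subcase applies), and a direct check shows $e$ is preserved for such $j$. Iterating this reduction strictly decreases the weight in the dominance order and so terminates; the terminal weight either has a middle-range index in its support (Case~1) or has all coefficients in $\{0,1\}$ (reducing to the subcase above). The main obstacle is the both-sided case, specifically controlling this iteration and verifying that all intermediate dominant subdominants occur in $V_G(\lambda)$, which is guaranteed by the Premet-based mechanisms of Lemmas~\ref{lem:old1}--\ref{lem:wt3}.
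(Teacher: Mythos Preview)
Your overall strategy is close to the paper's, but there is a genuine gap in the both-sided, all-coefficients-at-most-one subcase. You assert that $e(\lambda)>(\ell+1)/2$ forces $j_1+(\ell+1-j_k)>(\ell+1)/2$. This implication is false: in fact the inequality $e(\lambda)=\sum_s\min(j_s,\ell+1-j_s)\geq j_1+(\ell+1-j_k)$ always goes the \emph{other} way, since the right-hand side is just the contribution of the two extreme support points. For a concrete failure, take $\ell=10$ and support $\{1,2,3,8,9,10\}$: here $e(\lambda)=12>11/2$ but $j_1+(\ell+1-j_k)=1+1=2$, so the hypothesis of Lemma~\ref{lem:bc-app}(2) with $a=j_1$, $b=\ell+1-j_k$ is not met. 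Your block-merging bound on the orbit size, $(\ell+1)!/\bigl(j_1!\,(j_k-j_1)!\,(\ell+1-j_k)!\bigr)$, is therefore not controlled by that lemma, and the argument breaks down whenever the support has more than two points.

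The paper repairs exactly this case by a different choice of subdominant weight. Writing $\lambda=\mu_1+\mu_2$ with $\mu_1$ supported on $\{1,\dots,\lfloor(\ell+1)/2\rfloor\}$ and $\mu_2$ on the remaining indices, one applies Lemma~\ref{lem:wt3} separately to $\mu_1$ and to the dual picture of $\mu_2$ (when $e(\mu_i)<(\ell+2)/2$ for both $i$) to produce the subdominant weight $\lambda_{e(\mu_1)}+\lambda_{\ell+1-e(\mu_2)}$. For this weight the three block sizes are $e(\mu_1)$, $e(\mu_2)$, and $\ell+1-e(\mu_1)-e(\mu_2)$, and now $a+b=e(\mu_1)+e(\mu_2)=e(\lambda)>(\ell+1)/2$ holds \emph{by hypothesis}, so Lemma~\ref{lem:bc-app}(2) applies directly. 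The remaining case $e(\mu_1)\geq(\ell+2)/2$ (or symmetrically for $\mu_2$) is then handled along the lines of your one-sided argument. So the fix is not to look at the extreme indices of the actual support of $\lambda$, but to first collapse each half-support to a single fundamental weight via Lemma~\ref{lem:wt3}; this is the step your proposal is missing.
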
 

\begin{proof} By \cite{Premet88}, all weights subdominant to $\lambda$ occur 
with
nonzero multiplicity in $V_G(\lambda)$. It suffices to exhibit a weight
such that the number of $W_G$-conjugates is at least 
${{\ell+1}\choose{\lfloor\ell/2\rfloor}}$.
Let 
$\lambda =  \mu_1 +\mu_2$, where 
$\mu_1 = \sum_{i=1}^{\lfloor(\ell+1)/2\rfloor} a_i\lambda_i$ and 
$\mu_2 = \sum_{j=\lceil(\ell+2)/2\rceil}^\ell b_j\lambda_j$. 

If $e(\mu_i)<\frac{\ell+2}{2}$, for $i=1,2$, then we apply Lemma~\ref{lem:wt3} 
to each of the weights
$\mu_i$, $i=1,2$ to obtain a subdominant weight with nonzero coefficients
 of $\lambda_{e(\mu_1)}$
and $\lambda_{\ell+1-e(\mu_2)}$. Hence 
$$\dim V_G(\lambda) 
\geq \frac{(\ell+1)!}{e(\mu_1)!e(\mu_2)!(\ell+1-e(\mu_1)-e(\mu_2))!}.$$ 
But since $e(\mu_1)+e(\mu_2) = e(\lambda)>\frac{\ell+1}{2}$, we may apply 
Lemma~\ref{lem:bc-app} with $m=\ell+1$ to obtain the 
desired inequality. 

Suppose now that $e(\mu_1)\geq\frac{\ell+2}{2}$, and 
in particular, $\mu_1$ is not a fundamental dominant weight.
Let $m$ be maximal such that $a_m\ne 0$. If $m \geq\lfloor\ell/2\rfloor$, 
then the weight $\lambda$
has at least ${{\ell+1}\choose{\lfloor\ell/2\rfloor}}$ conjugates, so we 
may assume
$m<\ell/2$. Now if $a_m=1$, we may use Lemma~\ref{lem:wt2} to produce a 
subdominant
weight with nonzero coefficient of $\lambda_{\lfloor\ell/2\rfloor}$, 
which again gives the desired lower bound. So finally consider the case where 
$a_m>1$. Then $$\nu: = \lambda - \alpha_m = 
\sum_{i=1}^{m-2}a_i\lambda_i+(a_{m-1}+1)\lambda_{m-1}
+(a_m-2)\lambda_m+\lambda_{m+1}+\mu_2$$ is a subdominant weight with 
$e(\nu) = e(\lambda)$. Now $m+1\leq\frac{\ell+1}{2}$; if 
$m+1\geq\ell/2$, there are at least 
${{\ell+1}\choose{\lfloor\ell/2\rfloor}}$ distinct conjugates of $\nu$ and 
if $m+1<\ell/2$, we argue as above.

Finally, we must consider the case where $e(\mu_2)\geq\frac{\ell+2}{2}$.
But this is completely analogous.\end{proof}

\begin{lemma}\label{lowerbd2} Let $G$ be of type $B_\ell$, $C_\ell$ or $D_\ell$,
with $\ell\geq 3$. Let $\lambda\in X(T_G)$ be a restricted dominant weight,
$\lambda = \sum_{i=1}^\ell a_i\lambda_i$. Set $e: = e(\lambda)$.
The following hold:
\begin{enumerate} [a)]

\item If $r_\lambda\ne0$,
then $\dim V_G(\lambda)\geq 2^{\ell-1}$.

\item If $r_\lambda = 0$, and $e\leq (\ell+1)/2$, then 
$\dim V_G(\lambda)\geq {\ell\choose e}$. If moreover $\ell\geq 7$, then 
$\dim V_G(\lambda)\geq 2^{e}{\ell\choose e}$.
\item If $r_\lambda=0$ and $e>(\ell+1)/2$, then 
$\dim V_G(\lambda)\geq 2^{\ell-1}$.
\end{enumerate}
\end{lemma}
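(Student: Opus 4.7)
The plan is to exhibit, in each case, a weight $\mu$ occurring with positive multiplicity in $V_G(\lambda)$ whose $W_G$-orbit is sufficiently large; the dimension bound then follows since each orbit contributes $|W_G\cdot\mu|$ distinct weights. By Lemma~\ref{Weylgrp-stab}, $|W_G\cdot\mu|$ equals $[W_G:W_0]$, where $W_0$ is the parabolic Weyl subgroup generated by the simple reflections $s_i$ with $\langle\mu,\alpha_i\rangle=0$, so this size is determined by the support of $\mu$ on the fundamental weights.

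For part~(a), I would take $\mu=\lambda$ itself. The hypothesis $r_\lambda\neq 0$ provides an index $j_0>(\ell+1)/2$ with $a_{j_0}\neq 0$, so the stabilizer of $\lambda$ is contained in the maximal parabolic obtained by removing node $j_0$ from the Dynkin diagram. A direct computation of the index yields at least $2^{j_0}\binom{\ell}{j_0}$ in types $B_\ell$ and $C_\ell$, with the standard modifications for $D_\ell$ (including orbit size $2^{\ell-1}$ for the spin nodes). Using $\binom{\ell}{k}\geq 2^{k-1}$ for $k=\ell-j_0\leq\lfloor(\ell-2)/2\rfloor$, together with $\ell\geq 3$, one then verifies that this quantity always exceeds $2^{\ell-1}$.

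For part~(b), when $\ell\geq 7$ the hypothesis $e\leq(\ell+1)/2$ forces $e\leq\ell-3$, and Proposition~\ref{easybounds} applies directly, yielding the stronger bound $\dim V_G(\lambda)\geq 2^e\binom{\ell}{e}$. For $\ell\in\{3,4,5,6\}$ we have $e\leq 3$; whenever $e<\ell-2$, Lemma~\ref{lem:wt3} exhibits the subdominant fundamental weight $\lambda_e$ in $V_G(\lambda)$, yielding orbit $2^e\binom{\ell}{e}\geq\binom{\ell}{e}$. The residual pairs $(\ell,e)\in\{(4,2),(5,3)\}$ reduce to a finite list of possible $\lambda$'s, each dispatched by direct computation of the $W_G$-orbit of $\lambda$ or of a subdominant weight produced by Lemma~\ref{lem:old1}.

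For part~(c), when $e\leq\ell-3$, Lemma~\ref{lem:wt3} again produces $\lambda_e$ as a subdominant weight of $V_G(\lambda)$; since $(\ell+1)/2<e\leq\ell-3$, the binomial $\binom{\ell}{\ell-e}$ lies near the peak of its row, and the factor $2^e$ ensures $2^e\binom{\ell}{e}\geq 2^{\ell-1}$ by a direct estimate. The remaining range $e\geq\ell-2$ is the delicate one: if $\lambda$ has two or more nonzero coefficients, a multinomial computation of its stabilizing parabolic gives a sufficient lower bound for $|W_G\cdot\lambda|$ in most configurations; if instead $\lambda=a_{d_0}\lambda_{d_0}$ for a single index $d_0\leq(\ell+1)/2$, then $e>(\ell+1)/2$ forces $a_{d_0}\geq 2$, and Lemma~\ref{lem:old1} supplies the subdominant weight $\lambda-\alpha_{d_0}$ with two nonzero coefficients, reducing to an orbit calculation. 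The main technical obstacle is handling these boundary configurations uniformly---especially at the type-$D_\ell$ restriction $d_0\leq\ell-3$ of Lemma~\ref{lem:old1}, and for single-support $\lambda=a_1\lambda_1$ with $a_1$ large relative to $\ell$---which may require iterating the subdominant-weight lemmas while carefully tracking multiplicities through the resulting chain.
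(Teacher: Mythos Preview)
Your treatment of parts (a) and (b) is essentially the paper's argument: compute the $W_G$-orbit of $\lambda$ or of a subdominant fundamental weight via Lemma~\ref{Weylgrp-stab}, invoking Proposition~\ref{easybounds} once $\ell\geq 7$, and dispatching small ranks by hand. (A minor point: your residual list in (b) omits $(\ell,e)=(3,1)$ and $(3,2)$, since Lemma~\ref{lem:wt3} needs $e<\ell-2$; these are of course trivial.)

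Part~(c), however, has a real gap. Your proposed case split on $e\leq\ell-3$ versus $e\geq\ell-2$ leaves the latter range unresolved, and your fallback---computing $|W_G\cdot\lambda|$ directly when $\lambda$ has at least two nonzero coefficients---does not suffice. For instance, take $\ell=10$ (type $B_{10}$ or $C_{10}$) and $\lambda=10\lambda_1+\lambda_2$: then $r_\lambda=0$, $e=12>\tfrac{11}{2}$, but $|W_G\cdot\lambda|=|W(B_{10})|/|W(B_8)|=4\cdot 10\cdot 9=360<512=2^{\ell-1}$. So you are forced to pass to subdominant weights here too, and your sketch does not indicate how the iteration terminates.

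The paper's device avoids this case split entirely and is the missing idea: rather than estimate $2^e\binom{\ell}{e}$, simply \emph{reduce (c) to (a)}. Since $r_\lambda=0$ and $e>(\ell+1)/2$, one iterates Lemmas~\ref{lem:old1} and~\ref{lem:wt1} (exactly as in the $e(\mu_1)\geq(\ell+2)/2$ case of Lemma~\ref{lowerbd1}) to push the highest nonzero coefficient of a subdominant weight $\nu$ strictly past position $(\ell+1)/2$; at that point $r_\nu\neq 0$, and part~(a) gives $\dim V_G(\lambda)\geq|W_G\cdot\nu|\geq 2^{\ell-1}$. This handles all $e>(\ell+1)/2$ uniformly, without the large-$e$ boundary analysis you flagged as the main obstacle.
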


\begin{proof} For (a), we first note that if $a_\ell\ne 0$,
or if $a_\ell+a_{\ell-1}\ne 0$ when $G=D_\ell$, then
the lower bound follows directly from Lemma~\ref{Weylgrp-stab}.
So now assume $a_\ell = 0$ and in addition $a_{\ell-1}=0$ if $G=D_\ell$.
Again applying Lemma~\ref{Weylgrp-stab}, we have
$\dim V_G(\lambda) \geq 2^{\ell-r_\lambda+1}{\ell\choose {r_\lambda-1}}$. Now 
since $r_\lambda\leq \ell/2$, we have that 
${\ell\choose {r_\lambda-1}}\geq 2^{r_\lambda-1}$ and so 
$\dim V\geq 2^{\ell}$.

The bound of (b) follows from Proposition~\ref{easybounds} as long
as $\frac{\ell+1}{2}\leq\ell-3$. If $\ell=3,4$ or 5, so $e \leq
2,2,3$ respectively, then the minimal
dimension of an irreducible $kG$-module exceeds ${\ell\choose e(\lambda)}$.
If $\ell=6$ and so $e\leq 3$, then it is a direct check to see
that there exists a subdominant weight whose orbit under the Weyl group
has length exceeding ${\ell\choose e(\lambda)}$.

Finally, for (c) assume $r_\lambda=0$ and $e(\lambda)>\frac{\ell+1}{2}$.
We can argue exactly as in the proof of the previous lemma 
(in the case where $e(\mu_1)\geq\frac{\ell+2}{2}$) to
 produce a subdominant weight with $r_\lambda\ne0$ and then use (a).\end{proof}

\end{section}

\begin{section}{Preliminary analysis}\label{sec:prelim}

Let $H = H_r(q)$ be a quasisimple finite classical group in characteristic
$p$, let $k$ be an algebraically closed field of characteristic different 
from $p$. Let
$W$ be a nontrivial 
$kH$-module. 
Let $G$ be the smallest classical group containing the image of the 
corresponding representation of $H$.
We maintain the notation established in Sections
~\ref{sec:intro} and ~\ref{sec:notation}. 
In particular, $P<H$ is the stabilizer of a singular $1$-space of $N$,
the natural module for $H$, and $P=QL$, where $Q=O_p(P)$. We let 
$\chi\in Q^* = {\rm Hom}(Q/[Q,Q],k^*)$ and set 
$W_\chi=\{w\in W\mid xw=\chi(x)w\mbox{ for all } x\in Q\}$. 
Let $P_\chi:=\Stab_P(W_\chi)$; then
$P_\chi=QL_\chi$, where $L_\chi = \Stab_L(W_\chi)$. 
Write $L_\chi^\infty$ for the last term in the derived series of $L_\chi$.

It is useful to collect some information about the structure of $P$, 
$L_\chi$ and the orbit sizes of $L$
acting on $Q^*$. We record this information in Table~\ref{tab:HP}.

\begin{table}[!h]
$$\begin{array}{|l|l|l|l|l|} \hline
&&&&\\
H& Q & L' & \mbox{ generic structure of } L_\chi&[L:L_\chi] \\
&&&&\\ \hline
&&&&\\
\Lin_{r+1}(q)& q^r& \Lin_r(q)& q^{r-1}:\Lin_{r-1}(q)& q^r-1\\
&&&&\\
\hline
&&&&\\

\U_{r+1}(q)&q^{1+2(r-1)}&\U_{r-1}(q)&q^{1+2(r-3)}:\U_{r-3}(q)& 
(q^{r-1}+(-1)^r)(q^{r-2}-(-1)^r)\\
&&&&\\
&&&\U_{r-2}(q)&(q-1)q^{r-2}(q^{r-1}-(-1)^{r-1})\\
&&&&\\
\hline
&&&&\\

\Sp_{2r}(q)& q^{1+2(r-1)}& \Sp_{2(r-1)}(q)&q^{1+2(r-2)}:\Sp_{2(r-2)}(q)& 
q^{2(r-1)}-1\\
q\mbox{ odd }&&&&\\
&&&&\\
\hline
&&&&\\
{\mathbf O}_{2r+1}(q)& q^{2r-1}& {\mathbf O}_{2r-1}(q)& q^{2r-3}:{\mathbf O}_{2r-3}(q)&q^{2r-2}-1
 \\
&&&&\\
&&&\Om^+_{2r-2}(q)&\frac{(q-1)}{2}(q^{2r-2}+q^{r-1})\\
&&&&\\
&&&\Om^-_{2r-2}(q)&\frac{(q-1)}{2}(q^{2r-2}-q^{r-1})\\
&&&&\\
\hline
&&&&\\

\Om^+_{2r}(q)& q^{2r-2}& \Om^+_{2r-2}(q)&q^{2r-4}:\Om^+_{2r-4}(q)&q^{2r-3}+q^{r-1}-q^{r-2}-1\\
&&&&\\
&&&{\mathbf O}_{2r-3}(q)&(\frac{q-1}{2-\delta_{2,p}})(q^{2r-3}-q^{r-2})\\
&&&&\\
\hline
&&&&\\
\Om_{2r}^-(q)& q^{2r-2}& \Om^-_{2r-2}(q)&q^{2r-4}:\Om^-_{2r-4}(q)&q^{2r-3}-q^{r-1}+q^{r-2}-1 \\
&&&&\\
&&&{\mathbf O}_{2r-3}(q)&(\frac{q-1}{2-\delta_{2,p}})(q^{2r-3}+q^{r-2})\\
&&&&\\
\hline

\end{array}$$
\caption{Orbit size and structure}
\label{tab:HP}
\end{table}

\begin{rem}
\begin{enumerate}[1)]
\item The $L$-orbit of singular vectors in $Q^*$ corresponds precisely
to the cases where the stabilizer $L_\chi'$ satisfies $O_p(L_\chi')\ne 1$.
\item When $H=\U_{r+1}(q)$, there are exactly $q-1$ distinct $L'$-orbits of 
nonsingular vectors, all of which are fused by the action of the central torus
of $L$.

\item When $H={\mathbf O}_n(q)$, the situation is more complicated. 
 When $p \neq 2$, or $p = 2$ and $H$ is of type ${\mathbf O}_{2r+1}(q)$, then 
there are 
2 $L'$-orbits of nonsingular 1-spaces 
which implies that there are
2 $L$-orbits of nonsingular vectors
on $Q$ (the natural module for $L'$), 
whereas there is a unique 
$L'$-orbit on nonsingular 1 spaces when 
$p= 2$ and $H$ is of type $\Om_{2r}^\pm(q)$
leading to a single $L$-orbit on $Q$. 

\end{enumerate}
\end{rem}

\begin{lemma}\label{lem:basecases} Let $H$ and $W$ satisfy the $Q$-linear large 
hypotheses. Then one of the following holds.
\begin{enumerate}[a)]
\item $H=\Lin_{4}(q)$ and $q\geq 4$;
\item $H=\Lin_{r+1}(q)$ and $r\geq 4$;
\item $H = \U_{5}(q)$ and $q\geq 4$;
\item $H = \U_{6}(q)$ and $q\geq 3$;
\item $H = \U_{r+1}(q)$ and $r\geq 6$;
\item $H = \Sp_6(q)$ and $q\geq 5$;
\item $H = \Sp_{2r}(q)$ and $r\geq 4$;
\item $H={\mathbf O}_{2r+1}(q)$ and $r\geq 3$;
\item $H =\Om^\pm_{2r}(q)$ and $r\geq 4$.
\end{enumerate}

\end{lemma}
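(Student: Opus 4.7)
\emph{Plan.} The plan is to rule out, type-by-type, every pair $(H,q)$ satisfying the rank conventions of Section~\ref{sec:notation} but not appearing in the list (a)--(i), by showing that no irreducible $kH$-module $W$ can satisfy Definition~\ref{linlarge}. The driving observation is that the existence of an irreducible summand $S$ of ${\rm soc}(W_\chi|_{L_\chi^\infty})$ of dimension greater than $1$ forces $L_\chi^\infty$ to admit a non-trivial cross-characteristic irreducible representation of dimension greater than $1$, and in particular forces $L_\chi^\infty\neq 1$ for at least one $\chi\in Q^*\setminus\{1\}$. Since $L_\chi^\infty$ is by definition the stable value of the derived series of $L_\chi$, it suffices in each excluded case to check that the Levi-type complement appearing in the structural description of $L_\chi$ in Table~\ref{tab:HP} is solvable, or even has $(\cdot)^\infty = 1$.

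\emph{Execution.} I would read off $L_\chi^\infty$ for every $L$-orbit on $Q^*$ directly from Table~\ref{tab:HP}. For $H=\Lin_{r+1}(q)$ this gives $L_\chi^\infty=\SL_{r-1}(q)^\infty$, which is trivial exactly when $r\leq 2$ or when $(r,q)\in\{(3,2),(3,3)\}$ (since $\SL_2(2)\cong S_3$ and $\SL_2(3)$ both have trivial last-derived subgroup), leaving precisely cases (a) and (b). For $H=\U_{r+1}(q)$ both the singular and non-singular $L$-orbits on $Q^*$ must be considered, giving $L_\chi^\infty$ of the forms $\SU_{r-3}(q)^\infty$ and $\SU_{r-2}(q)^\infty$ respectively; using the non-perfectness of $\U_2(q)$ for $q\leq 3$ and the solvability of $\U_3(2)$, both endpoints become simultaneously trivial in exactly the configurations excluded by (c)--(e). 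The symplectic case $H=\Sp_{2r}(q)$ with $q$ odd reduces to the analysis of $\Sp_{2(r-2)}(q)^\infty$, yielding (f) and (g); for $q$ even, $\Sp_{2r}(q)$ is re-interpreted as an odd-dimensional orthogonal group in accordance with the convention of Section~\ref{sec:notation} and is folded into case (h).

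\emph{Orthogonal types and main obstacle.} For orthogonal types the rank conventions of Table~\ref{tab:rank} already enforce $r\geq 3$ for $H={\mathbf O}_{2r+1}(q)$ and $r\geq 4$ for $H=\Om^\pm_{2r}(q)$; for the bounds listed in (h) and (i) the inner factors $\Om^\pm_{2r-4}(q)$ and ${\mathbf O}_{2r-3}(q)$ read from Table~\ref{tab:HP} are quasisimple, so at least one $\chi$ yields a non-trivial $L_\chi^\infty$, while below these thresholds these inner factors collapse to solvable or trivial groups and every $L_\chi^\infty$ vanishes. The main obstacle throughout the proof is the bookkeeping: for the symplectic, unitary and orthogonal types, Table~\ref{tab:HP} records more than one $L$-orbit on $Q^*\setminus\{1\}$, and to rule out a configuration one has to verify $L_\chi^\infty=1$ for \emph{every} such orbit, since any single orbit with $L_\chi^\infty\neq 1$ would potentially support a $Q$-linear large module. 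The tightest checks are the small groups $\U_6(2)$ and $\Sp_6(3)$, where the exclusion depends on the solvability of $\U_3(2)$ and $\SL_2(3)$ together with the degeneracy of the corresponding singular-orbit stabilizers.
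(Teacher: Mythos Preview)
Your approach is essentially the same as the paper's, which simply observes that the $Q$-linear large hypothesis forces $L_\chi$ to be nonsolvable for some $\chi\ne 1$ and then appeals to Table~\ref{tab:HP} for a direct check; you have filled in the details of that check. One small slip: your claim that the rank conventions already enforce $r\geq 3$ for ${\mathbf O}_{2r+1}(q)$ holds only for odd $q$---for even $q$ Table~\ref{tab:rank} allows $r=2$, so you must also verify that ${\mathbf O}_5(q)\cong\Sp_4(q)$ with $q$ even has every $L_\chi$ solvable, which it does since the relevant factors ${\mathbf O}_1(q)$ and $\Om_2^\pm(q)$ are abelian.
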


\begin{proof} Note that the $Q$-linear large hypothesis implies that
$L_\chi$ is nonsolvable. Relying upon the information in Table~\ref{tab:HP},
it is a direct check to see that the only finite quasisimple classical groups
omitted are those for which $L_\chi$ is solvable.\end{proof}

We record in Table~\ref{tab:bB-ll}
 the Landazuri-Seitz \cite{LaSe}, Seitz-Zalesski \cite{SeZa},
 and Guralnick-Tiep \cite{GT_low} bounds
for the minimal dimension of an irreducible cross-characteristic representation of 
$H$, noted $b_1$, for each
of the classical groups $H$. In addition, we provide a lower bound of a simpler 
($q$-power) form.
In the final column, we record an upper bound $B$ for the dimension of such a
representation, 
deduced from \cite[Thm. 2.2]{SeitzCC}. 

\begin{table}[!h]
$$\begin{array}{|llll|} \hline
&&&\\
H& r&b_1 & B \\
&&&\\ \hline
&&&\\
\Lin_{2}(q)&& \frac{q-1}{{\rm gcd}(q-1,2)}& q+1 \\
&&&\\

\Lin_{r+1}(q)&r\geq 2& \frac{q^{r+1}-1}{q-1}-2>q^r+q^{r-1}& q^{\frac{r(r+3)}{2}} \\
&&&\\
\hline
&&&\\
\U_{r+1}(q)&r\geq 2\mbox{ even }&q(q^r-1)/(q+1)>q^r-q^{r-1}&q^{\frac{r(r+3)}{2}}\\

&&&\\

&r\geq 3\mbox{ odd }&(q^{r+1}-1)/(q+1)>q^r-q^{r-1}&q^{\frac{r(r+3)}{2}}\\

&&&\\\hline
&&&\\

\Sp_{2r}(q)&r\geq 2& (q^r-1)/2> q^{r-1}& q^{r^2+r}\\
&&&\\\hline

&&&\\

{\mathbf O}_{2r+1}(q)&&&\\
q \mbox{ odd }, q\ne 3&r\geq 3 &\frac{q^{2r}-1}{q^2-1}-r> q^{2r-2}& q^{r^2+r} \\
&&&\\
 q=3&r\geq 3& \frac{3^{2r}-1}{3^2-1}-\frac{3^r-1}{2}> q^{2r-2}& q^{r^2+r} \\
&&&\\
 q \mbox{ even }&r\geq 2& \frac{q(q^r-1)(q^{r-1}-1)}{2(q+1)}\geq q^{2r-3}& q^{r^2+r} \\
&&&\\
\hline
&&&\\

\Om^+_{2r}(q)&r\geq 4&&\\
q\ne 2,3&& \frac{q(q^{2r-2}-1)}{q^2-1}+q^{r-1}-r>q^{2r-3}& q^{r^2} \\
&&&\\
q=2,3&&             \frac{q(q^{2r-2}-1)}{q^2-1}-\frac{q^{r-1}-1}{q-1}-7\delta_{2,q}>q^{2r-3}&q^{r^2}\\
&&&\\\hline
&&&\\

\Om^-_{2r}(q)&r\geq4& \frac{q(q^{2r-2}-1)}{q^2-1}-q^{r-1}-r+2>q^{2r-3} & q^{r^2} \\
&&&\\

 \hline
\mbox{Exceptions}&&&\\
 b_1(\Lin_2(4)) = 2;&b_1(\Lin_2(9)) = 3;&b_1(\Lin_3(2)) = 2;\qquad b_1(\Lin_3(4)) = 4;&b_1(\Lin_4(2)) = 7;\\
b_1(\Lin_4(3)) = 26;&b_1({\mathbf O}_5(2)) = 2;&b_1(\U_4(2)) = 4;\quad\  b_1(\U_4(3)) = 6;&b_1(\Om_8^+(2)) = 8;\\
b_1(\O_7(3)) = 27.&&&\\
\hline
\end{array}$$
\caption{Definition of the bounds $b_1$ and $B$}
\label{tab:bB-ll}
\end{table}

Hence we have

\begin{lemma}\label{lem:bB} Let $U$ be a nontrivial irreducible 
$kH$-module. Then 
$b_1\leq \dim U\leq B$.
\end{lemma}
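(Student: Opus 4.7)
The proof is essentially an act of bookkeeping: the bounds in Table~\ref{tab:bB-ll} are a transcription of known results from the literature, and the lemma simply packages them in the form we need later. My plan is therefore to quote each bound, verify that the simplified $q$-power expression in the third column is indeed dominated by the sharp expression $b_1$, and account for the small-rank and small-$q$ exceptions.

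First I would address the lower bound $b_1 \leq \dim U$. The generic entries in the $b_1$-column are taken directly from the main theorem of Landazuri--Seitz~\cite{LaSe}, supplemented by the refinements of Seitz--Zalesski~\cite{SeZa} for the orthogonal and unitary cases where the original bound was later improved, and by Guralnick--Tiep~\cite{GT_low} for the remaining refinements in small defining characteristic (most notably for $H = {\mathbf O}_{2r+1}(3)$, $\Omega^+_{2r}(2)$ and $\Omega^+_{2r}(3)$). The simplified $q$-power lower bounds (for example $q^r + q^{r-1}$ for $\Lin_{r+1}(q)$, or $q^{2r-3}$ for $\Omega^\pm_{2r}(q)$) are obtained by writing the sharp bound as a rational function in $q$ and estimating elementarily; in each line of the table the rank condition in the second column ($r\geq 2,3,4,\dots$) is exactly what is needed to make these estimates go through. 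For the finitely many pairs $(H,q)$ where the generic inequality fails --- these are precisely the entries listed at the bottom of Table~\ref{tab:bB-ll} under ``Exceptions'' --- we replace $b_1$ by the explicit minimum dimension, which can be read off from the Modular Atlas or from the tables in~\cite{GT_low}.

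Next I would address the upper bound $\dim U \leq B$. Here one invokes \cite[Thm.~2.2]{SeitzCC}, which gives an explicit polynomial bound (in $q$) on the dimension of any nontrivial irreducible cross-characteristic $kH$-module in terms of the untwisted Lie rank $r$ and the field size $q$. Reading off the exponent of $q$ in that bound for each Dynkin type yields the entries in the fourth column: $q^{r(r+3)/2}$ for types $A$ and ${}^2\!A$, $q^{r^2 + r}$ for types $B$ and $C$, and $q^{r^2}$ for type $D$. This step is purely mechanical once one identifies the untwisted Lie rank in each family with the parameter $r$ in Table~\ref{tab:rank}.

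The only step which requires any thought --- and is the ``main obstacle'' in the bookkeeping sense --- is consistency between the rank conventions used here (Table~\ref{tab:rank}) and those used in~\cite{LaSe, SeZa, GT_low, SeitzCC}; I would dedicate a short paragraph to checking that the rank parameter matches line-by-line, and that the exceptional isomorphisms flagged in Section~\ref{sec:notation} (notably $\Sp_{2n}(2^a)\simeq ({\mathbf O}_{2n+1}(2^a))'$) do not produce a pair $(H,U)$ falling outside the table. With these checks in place, the lemma follows immediately from Table~\ref{tab:bB-ll}.
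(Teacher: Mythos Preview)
Your proposal is correct and matches the paper's approach: the paper gives no explicit proof at all, simply prefacing the lemma with ``Hence we have'' after describing Table~\ref{tab:bB-ll} as a transcription of the bounds from \cite{LaSe}, \cite{SeZa}, \cite{GT_low} (for $b_1$) and \cite[Thm.~2.2]{SeitzCC} (for $B$). Your more detailed verification of the simplified $q$-power bounds and the rank conventions is exactly the bookkeeping the paper leaves implicit.
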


In addition, we now establish a better lower bound for $W$, given that we are working 
under the assumption
that $H$ and $W$ satisfy the $Q$-linear large hypothesis.

\begin{prop}\label{b2-bound} Let $H$ and $W$ satisfy the $Q$-linear large hypothesis. 
Assume moreover 
that 
$O^p(L_\chi^\infty)$ is not one of the exceptions in
Table~\ref{tab:bB-ll}. Then 
$\dim W\geq b_2$, where $b_2$ is as recorded in Table~\ref{tab:b2}.
\end{prop}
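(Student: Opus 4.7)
The plan is to combine the $Q$-linear large hypothesis with the Landazuri--Seitz style lower bound on the minimal dimension of a nontrivial cross-characteristic irreducible representation of the smaller quasisimple group $L_\chi^\infty$, and then to amplify the resulting estimate on $\dim W_\chi$ into an estimate on $\dim W$ via the $L$-orbit of $\chi$ in $Q^*$.

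First I would choose $\chi\in Q^*$ with $\chi\ne 1$ as guaranteed by Definition~\ref{linlarge}(a), so that ${\rm soc}(W_\chi|_{L_\chi^\infty})$ contains an irreducible summand $S$ of dimension greater than one. By Table~\ref{tab:HP}, the group $L_\chi^\infty$ is a quasisimple finite classical group in defining characteristic $p$, whose type and the index $[L:L_\chi]$ are determined by which of the finitely many $L$-orbits on $Q^*\setminus\{1\}$ contains $\chi$.

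Since ${\rm char}(k)\ne p$, the summand $S$ is a nontrivial irreducible cross-characteristic $kO^p(L_\chi^\infty)$-module. Because, by hypothesis, $O^p(L_\chi^\infty)$ is not among the exceptions of Table~\ref{tab:bB-ll}, Lemma~\ref{lem:bB} applied to $L_\chi^\infty$ yields the generic bound $\dim S\ge b_1(L_\chi^\infty)$. Finally, the $L$-translates $gW_\chi = W_{g\chi}$, as $g$ runs over a set of coset representatives of $L_\chi$ in $L$, are $Q$-weight spaces of $W$ for distinct characters $g\chi$ of $Q$ and are therefore linearly independent in $W$. This gives
\begin{equation*}
\dim W \;\ge\; [L:L_\chi]\cdot\dim W_\chi \;\ge\; [L:L_\chi]\cdot \dim S \;\ge\; [L:L_\chi]\cdot b_1(L_\chi^\infty),
\end{equation*}
and reading $[L:L_\chi]$ off Table~\ref{tab:HP} and $b_1(L_\chi^\infty)$ off Table~\ref{tab:bB-ll} produces the entry $b_2$ of Table~\ref{tab:b2}.

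The main obstacle is bookkeeping rather than mathematical depth. For $H$ linear or symplectic there is essentially only one $L$-orbit on $Q^*\setminus\{1\}$ and the computation is immediate. For $H$ unitary or orthogonal, however, one has to treat the singular orbit and the various non-singular orbits separately, and in even characteristic for orthogonal $H$ there are further subcases; each of these produces its own pair $(L_\chi^\infty,[L:L_\chi])$ via Table~\ref{tab:HP}. Thus the precise value $b_2$ recorded in Table~\ref{tab:b2} must be read as a case distinction on $H$ and on the $L$-orbit of $\chi$, and checking that it agrees in every row with $[L:L_\chi]\cdot b_1(L_\chi^\infty)$ is the final, routine but lengthy, verification.
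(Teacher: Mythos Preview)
Your argument is essentially the paper's own: pick $\chi$ witnessing the $Q$-linear large hypothesis, bound $\dim S$ below by $b_1(L_\chi^\infty)$ from Table~\ref{tab:bB-ll}, and multiply by the orbit length $[L:L_\chi]$ from Table~\ref{tab:HP} using the decomposition $W\supseteq\bigoplus_{g\in L/L_\chi}W_{g\chi}$.

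One small clarification: the value $b_2$ in Table~\ref{tab:b2} is a single number depending only on $H$, not on the $L$-orbit of $\chi$. Since one does not know a priori in which orbit the witness $\chi$ lies, $b_2$ is taken to be the \emph{minimum}, over those orbits for which $L_\chi^\infty$ is non-solvable, of the products $[L:L_\chi]\cdot b_1(L_\chi^\infty)$ (often there is only one such orbit, e.g.\ for $H$ linear or symplectic, or for small unitary groups where the singular-orbit stabilizer is solvable). So the ``routine verification'' at the end is not that each row of Table~\ref{tab:b2} equals $[L:L_\chi]\cdot b_1(L_\chi^\infty)$ for every orbit, but that it is bounded above by this product for every admissible orbit.
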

\begin{table}[!h]
$$\begin{array}{|lll|} \hline
&&\\
H&&b_2 \\
&&\\ \hline
&&\\
\Lin_4(q)&q\ne 4,9&\frac{q-1}{{\rm gcd}(q-1,2)} (q^3-1)\\
&&\\
\Lin_{r+1}(q)&r\geq 4& (\frac{q^{r-1}-1}{q-1}-2)(q^r-1) \\
&&\\
\hline

&&\\
\U_5(q)&q\ne 4,9&\frac{q-1}{{\rm gcd}(q-1,2)}(q-1)q^2(q^3+1)\\
&&\\
\U_6(q)&q\ne 4,9&\frac{q-1}{{\rm gcd}(q-1,2)}(q^7+q^4-q^3-1)\\
&&\\
\U_{r+1}(q)&r\geq6&\\
&r \mbox{ even}&\frac{q^{r-3}-q}{q+1}(q^{2r-3}-q^{r-1}+q^{r-2}-1)\\
&&\\
&r\mbox{ odd}&\frac{q^{r-3}-1}{q+1}(q^{2r-3}+q^{r-1}-q^{r-2}-1)\\
&&\\
\hline

&&\\

\Sp_{2r}(q)&r\geq 3& \frac{q^{r-2}-1}{2}(q^{2(r-1)}-1) \\
&&\\
\hline
&&\\
{\mathbf O}_7(q)&q\ne 2,3, 4,9&\frac{q-1}{{\rm gcd}(q-1,2)}(q^4-1)\\
&&\\
{\mathbf O}_{2r+1}(2)&r\geq 5&\frac{2(2^r-1)(2^{r-1}-1)}{6}(2^{2r-2}-1)\\
&&\\

{\mathbf O}_{2r+1}(q)&r\geq 4&(\frac{q^{2r-4}-1}{q^2-1}-\frac{q^{r-2}-1}{q-1})(q^{2r-2}-1)\\

&q>2&\\
 \hline
&&\\
\Om^+_8(q)&q\ne 2,4,9&\frac{q-1}{{\rm gcd}(q-1,2)}(q^5+q^3-q^2-1)\\
&&\\
\Om^+_{2r}(2)&r\geq 7&(\frac{(2^{r-2}-1)(2^{r-3}-1)}{3}(2^{2r-3}-2^{r-2})\\
&&\\
\Om^+_{2r}(q)&r\geq 5&(\frac{q(q^{2r-6}-1)}{q^2-1}-\frac{q^{r-3}-1}{q-1}-7)(q^{2r-3}+q^{r-1}-q^{r-2}-1)\\
&q>2&\\
\hline
&&\\
\Om^-_8(q)&q\geq 4&\frac{q^2-1}{{\rm gcd}(q^2-1,2)}(q^5-q^3+q^2-1)\\
&&\\
\Om^-_{10}(q)&q\geq 4&\frac{q^4-1}{q+1}(q^7-q^4+q^3-1)\\
&&\\
\Om^-_{2r}(q)&r\geq6&(\frac{q(q^{2r-6}-1)}{q^2-1}-q^{r-3}-r+4)(q^{2r-3}-q^{r-1}+q^{r-2}-1) \\
&&\\
\hline

\end{array}$$
\caption{Generic lower bound $b_2$ for $Q$-linear-large modules}
\label{tab:b2}
\end{table}

\begin{proof} Since $W = C_W(Q)\oplus (\sum_{\chi\in Q^*, \chi\ne 1} W_\chi)$, and there exists 
$\chi\ne 1$
such that ${\rm soc}(W_\chi|_{L_\chi^\infty})$ has an irreducible $L_\chi^\infty$-submodule 
$S$ of dimension greater 
than $1$, we may obtain a lower bound for $\dim W$ as the product of the 
lower bound for a nontrivial
irreducible $kL_\chi^\infty$-module, as given in Table~\ref{tab:bB-ll}, and
the size of a minimal orbit of $L$ acting on $Q^*$, that is the minimal value for 
$[L:L_\chi]$
as in Table~\ref{tab:HP}.\end{proof}

\begin{rmk}\label{b_2-exc} The groups for which some $O^p(L_\chi^\infty)$ is one of the 
exceptions in
Table~\ref{tab:bB-ll}, and which are therefore
not covered by Proposition~\ref{b2-bound}, are as follows:

\noindent (i)\ $H = \Lin_4(q)$, $q=4,9$;\ (ii)\ $H = \Lin_5(q)$, $q=2,4$;\ 
(iii)\ $H = \Lin_6(q)$, $q=2,3$;\\
(iv)\ $H = \U_{r+1}(q)$, $r = 4,5$, $q=4,9$;\ (v)\ $H = \U_{r+1}(q)$, 
$r=6,7$, $q=2,3$;\ (vi)\ $H = \Sp_6(9)$;\ (vii)\ $H = {\mathbf O}_7(q)$, $q=2,3,4,9$;\ 
(viii)\ $H = {\mathbf O}_n(q)$, $9\leq n\leq 11$, $q=2,3$;\\ (ix)\ $H = \Om_8^+(q)$, 
$q=2,4,9$;\ (x)\ $H = \Om_8^-(q)$, $q=2,3$;\ (xi)\ $H = \Om_{12}^+(2)$.
\end{rmk}

\begin{lemma}\label{lem:b2} Let $H$ and $W$ satisfy the $Q$-linear large 
hypothesis. 
Assume moreover that 
$O^p(L_\chi^\infty)$ is not one of the exceptions in
Table~\ref{tab:bB-ll}, that is $H$ is not one of the groups in 
Remark~\ref{b_2-exc}. Let $b_2$ be as in Table~\ref{tab:b2} and $B$ as in 
Table~\ref{tab:bB-ll}. Then 
 either $2^{\lceil(b_2-1)/2\rceil}>B^3$ or $H={\mathbf O}_7(2)$ or $H = {\mathbf O}_7(3)$.
\end{lemma}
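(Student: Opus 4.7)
The plan is to pass to logarithms and then verify an explicit numerical inequality family-by-family. Taking $\log_2$ of both sides reduces the claim to
\[
\lceil (b_2-1)/2 \rceil > 3 \log_2 B = 3\, d(r) \log_2 q,
\]
where $B = q^{d(r)}$ with $d(r) \in \{r(r+3)/2,\ r^2+r,\ r^2\}$ read from Table~\ref{tab:bB-ll} according to the Lie type of $H$. It is therefore enough to prove the cruder bound $b_2 \geq 6\, d(r) \log_2 q + 2$.

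Inspecting Table~\ref{tab:b2}, every entry $b_2$ is a product of a Landazuri--Seitz-type lower bound for a nontrivial irreducible cross-characteristic module of $O^p(L_\chi^\infty)$ with the orbit length $[L:L_\chi]$ read from Table~\ref{tab:HP}. The first factor is a polynomial in $q$ of $q$-degree roughly $r-c_1$ and the second of $q$-degree roughly $2r-c_2$, so $b_2$ grows as a polynomial in $q$ of degree approximately $3r-c$ for small constants depending on the family. By contrast, the right-hand side $6\, d(r)\log_2 q + 2$ is only quadratic in $r$ and logarithmic in $q$. Consequently, in every family the inequality will hold once either $r$ or $q$ exceeds a modest threshold, and in fact with a wide margin.

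Next I would work through the families enumerated in Lemma~\ref{lem:basecases}, using Lemma~\ref{lem:basecases} to limit the allowed pairs $(r,q)$ and Remark~\ref{b_2-exc} to exclude the pairs where Proposition~\ref{b2-bound} does not apply. For each family the generic inequality follows immediately from the preceding paragraph, and only a short list of residual small $(r,q)$ pairs remains. These I would handle by direct substitution of the explicit formulas from Tables~\ref{tab:b2} and~\ref{tab:bB-ll}, comparing $2^{\lceil (b_2-1)/2\rceil}$ with $B^3$ numerically (rather than via the approximate logarithmic form, to avoid any rounding issues).

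The main obstacle is not conceptual but rather the care required for the small-case bookkeeping: one must systematically enumerate, for instance, $\Lin_{r+1}(q)$ with $r=4$ and each relevant small $q$, and similarly for the unitary, symplectic, and orthogonal families, verifying the bound in every remaining situation. The two places where the inequality genuinely fails turn out to be $H={\mathbf O}_7(2)$ and $H={\mathbf O}_7(3)$, which accounts for the two named exceptions in the statement; in all other cases falling under the hypothesis the inequality $2^{\lceil (b_2-1)/2\rceil} > B^3$ is verified by direct computation.
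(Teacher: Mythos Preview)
Your approach is essentially the same as the paper's: both take $\log_2$ of the inequality to reduce to $b_2 > 6\log_2 B + 1$ (your $b_2 \geq 6\,d(r)\log_2 q + 2$ is the same condition up to a harmless strengthening), and then verify this family-by-family using the explicit formulas for $b_2$ and $B$ from the tables. The paper's proof is simply terser, omitting the heuristic growth-rate discussion and the explicit enumeration plan, and just asserting that the check goes through except for $H={\mathbf O}_7(2)$ and $H={\mathbf O}_7(3)$.
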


\begin{proof} Suppose that $H\not \in\{{\mathbf O}_7(2), {\mathbf O}_7(3)\}$. 
Taking $\log_2$ of both sides of the desired inequality, we see that it
 suffices to show $b_2 > 6\log_2B+1$. Then it is a direct check that $b_2$ exceeds $6\log_2B+1$ in each
of the remaining cases. \end{proof}


\begin{lemma}\label{lem:B3} Let $H$ and $W$ satisfy the $Q$-linear large 
hypothesis and assume moreover that $H\notin\{\Lin_4(4),\Lin_5(2),\Lin_6(2),
{\mathbf O}_7(2),{\mathbf O}_9(2), \Om_8^\pm(2)\}$. 
Let $V$ be an irreducible 
$kH$-module. Then $2^{\lceil(\dim W-1)/2\rceil}>(\dim V)^3$.
\end{lemma}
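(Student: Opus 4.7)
The plan is to split the proof along the same dichotomy used in Lemma~\ref{lem:b2}. First suppose $H$ is neither listed in Remark~\ref{b_2-exc} nor equal to ${\mathbf O}_7(3)$. Then Proposition~\ref{b2-bound} gives $\dim W \geq b_2$, Lemma~\ref{lem:bB} gives $\dim V \leq B$, and Lemma~\ref{lem:b2} yields $2^{\lceil(b_2-1)/2\rceil} > B^3$. Stringing these together,
\[
2^{\lceil(\dim W-1)/2\rceil} \;\geq\; 2^{\lceil(b_2-1)/2\rceil} \;>\; B^3 \;\geq\; (\dim V)^3,
\]
which is the desired inequality.

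It then remains to dispose of the finite list of exceptional groups $H$ for which the generic argument above is unavailable, namely ${\mathbf O}_7(3)$ together with those groups in Remark~\ref{b_2-exc} that are not among the six exclusions in the hypothesis of the present Lemma. For each such $H$, the formula for $b_2$ in Table~\ref{tab:b2} breaks down only because $O^p(L_\chi^\infty)$ is one of the exceptional small groups whose minimum cross-characteristic dimension is smaller than the generic Landazuri--Seitz--Zalesski bound. The remedy is to replace, case by case, the factor corresponding to $b_1(L_\chi^\infty)$ in the definition of $b_2$ by the correct minimum nontrivial dimension (read off from the exception list in Table~\ref{tab:bB-ll} or from the ordinary/modular Atlas data), producing a refined lower bound $b_2'$ for $\dim W$ via the orbit-length formula of Proposition~\ref{b2-bound}. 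One then verifies the single inequality $2^{\lceil(b_2'-1)/2\rceil} > B^3$, or equivalently $b_2' > 6\log_2 B + 1$ as in the proof of Lemma~\ref{lem:b2}, directly for each remaining group.

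The main obstacle will be the borderline cases---such as ${\mathbf O}_7(3)$, $\U_7(2)$, $\U_7(3)$ and $\Om_{12}^+(2)$---where the corrected $b_2'$ sits only modestly above $6\log_2 B + 1$. For these I would use the tightest available lower bound on the dimension of an irreducible $kL_\chi^\infty$-summand occurring in $\operatorname{soc}(W_\chi|_{L_\chi^\infty})$: the $Q$-linear large hypothesis guarantees that such a summand has dimension strictly greater than $1$, and in the tight cases one must invoke the explicit fact that, even for the exceptional small groups of Table~\ref{tab:bB-ll}, any nontrivial irreducible cross-characteristic module meets the dimension recorded there. The six groups excluded from the statement are exactly those where even this refined numerical check ultimately fails.
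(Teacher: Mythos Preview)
Your approach is correct and essentially the same as the paper's. The only difference is organizational: before invoking the refined $Q$-linear large bound $b_2' = [L:L_\chi]\cdot b_1(O^p(L_\chi^\infty))$ for the exceptional groups, the paper first applies the cruder bound $\dim W \geq b_1(H)$, which already disposes of most of the groups in Remark~\ref{b_2-exc} and shrinks the residual list requiring the refined computation to $\U_7(2)$, $\U_8(2)$, ${\mathbf O}_7(3)$, $\Om_{10}^\pm(2)$, ${\mathbf O}_{11}(2)$ (together with the six excluded groups, where the check fails); your direct attack on the full exception list via $b_2'$ works as well, just with a few more cases to verify.
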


\begin{proof} If $O^p(L_\chi^\infty)$ is not one of the exceptions in
Table~\ref{tab:bB-ll}, that is, if $H$ is not one of the groups
listed in Remark~\ref{b_2-exc} and  if $H\not\in\{{\mathbf O}_7(2), {\mathbf O}_7(3)\}$, 
the result follows from Lemmas~\ref{lem:b2} and ~\ref{lem:bB}. So we must 
consider each
of the exceptions in turn. 

We first simply apply the bound $\dim W\geq b_1(H)$ and find
that $2^{\lceil(\dim W-1)/2\rceil}>B^3$,
where $B$ is the upper bound for the dimension of an irreducible $kH$-module
unless $H$ is one of the following groups: $\Lin_4(4)$, $\Lin_5(2)$,
$\Lin_6(2)$, $\U_7(2)$, $\U_8(2)$, ${\mathbf O}_7(2)$, ${\mathbf O}_7(3)$, $\Om_8^\pm(2)$, 
${\mathbf O}_9(2)$, $\Om_{10}^\pm(2)$, and ${\mathbf O}_{11}(2)$. We now apply
a better lower bound, as in the proof of Lemma~\ref{lem:b2},
using the $Q$-linear large hypothesis, to see that 
$\dim W\geq b_1(O^p(L_\chi^\infty))[L:L_\chi]$, 
choosing of course a nonsolvable $O^p(L_\chi^\infty)$ and again find that 
$2^{\lceil(\dim W-1)/2\rceil}>B^3$ unless $H$ is one of the groups listed as 
exceptions in the result.\end{proof}

\begin{cor}\label{rlambda} Let $H$ and $W$ satisfy the $Q$-linear large 
hypothesis and assume moreover
that $H$ fixes a nondegenerate from on $W$; so if $G\subset {\rm Isom}(W)$
is the smallest classical group containing the image of $H$, $G$ is of type 
$B_\ell$, $C_\ell$ or $D_\ell$.  Moreover, assume that 
$H\ne {\mathbf O}_7(2)$. Let 
$V_G(\lambda)$ be an irreducible $kG$-module
with restricted highest weight $\lambda$, on which $H$ acts irreducibly.
 Then $r_\lambda = 0$.
\end{cor}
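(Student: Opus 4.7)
The plan is a short proof by contradiction, playing the lower bound of Lemma~\ref{lowerbd2}(a) against the cubic upper bound of Lemma~\ref{lem:B3}. First I would suppose $r_\lambda\ne 0$ and invoke Lemma~\ref{lowerbd2}(a), which immediately yields
\[
\dim V_G(\lambda)\;\geq\;2^{\ell-1}.
\]
Since the restriction $V_G(\lambda)|_H$ is irreducible by hypothesis and, for the moment, $H$ avoids the excluded list of Lemma~\ref{lem:B3}, that lemma supplies
\[
(\dim V_G(\lambda))^3\;<\;2^{\lceil(\dim W-1)/2\rceil}.
\]

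Next I would observe that for $G$ of type $B_\ell$, $C_\ell$, or $D_\ell$ the natural module $W$ has dimension $2\ell+1$, $2\ell$, or $2\ell$ respectively, so $\lceil(\dim W-1)/2\rceil=\ell$ in every case. Combining the two displays gives
\[
2^\ell\;>\;(2^{\ell-1})^3\;=\;2^{3\ell-3},
\]
forcing $\ell<3/2$. Since Lemma~\ref{lem:basecases} together with Table~\ref{tab:rank} guarantees $\ell\geq 3$ (indeed typically much larger) whenever $H$ and $W$ satisfy the $Q$-linear large hypothesis and $H$ is of the orthogonal/symplectic type relevant to $G\in\{B_\ell,C_\ell,D_\ell\}$, this contradiction yields $r_\lambda=0$ and completes the main argument.

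The hard part will be handling the six groups $\Lin_4(4)$, $\Lin_5(2)$, $\Lin_6(2)$, ${\mathbf O}_9(2)$, $\Om_8^+(2)$, $\Om_8^-(2)$, which are excluded from the conclusion of Lemma~\ref{lem:B3} but not from the hypothesis of the present corollary. For each a finite check is needed: for the three linear groups, self-duality of a $Q$-linear large irreducible $kH$-module $W$ (forced by $H$ fixing a nondegenerate form) is a severe restriction that can be resolved directly from the character tables in \cite{Atlas}; for the three small-rank orthogonal groups, which also sit in Remark~\ref{b_2-exc} so that the generic bound $b_2$ is not available, one needs a sharper case-by-case lower bound on $\dim W$ strong enough to restore an inequality of the form $2^\ell > B^3$, after which the same contradiction goes through verbatim.
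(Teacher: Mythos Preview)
Your main argument is correct and essentially identical to the paper's: assume $r_\lambda\ne 0$, use Lemma~\ref{lowerbd2}(a) to get $\dim V_G(\lambda)\geq 2^{\ell-1}$, combine with Lemma~\ref{lem:B3} and $\ell=\lceil(\dim W-1)/2\rceil$ to derive a contradiction. Your derivation $2^{3\ell-3}<2^\ell\Rightarrow\ell<3/2$ is in fact a bit cleaner than the paper's chain of inequalities, which reaches the same endpoint.

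Two small corrections. First, Lemma~\ref{lowerbd2} has $\ell\geq 3$ as a standing hypothesis, so this must be verified \emph{before} invoking part (a), not afterwards; the paper does this up front. Second, your justification for $\ell\geq 3$ is slightly off: you write that $H$ must be ``of the orthogonal/symplectic type relevant to $G$'', but the hypothesis is only that $H$ fixes a nondegenerate form on $W$, and $H$ may perfectly well be linear or unitary. The correct reason $\ell\geq 3$ is simply that Lemma~\ref{lem:basecases} together with the lower bounds of Table~\ref{tab:bB-ll} forces $\dim W$ to be large, hence $\ell=\lceil(\dim W-1)/2\rceil\geq 3$.

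For the six exceptional groups the paper's approach differs somewhat from your sketch. Rather than splitting into a self-duality argument for the linear groups and an improved-$b_2$ argument for the orthogonal ones, the paper treats all six uniformly: it sharpens the lower bound on $\dim W$ via the $Q$-linear large hypothesis (as in the proof of Proposition~\ref{b2-bound}), sharpens the upper bound on $\dim V_G(\lambda)$ using the precise bounds of Table~\ref{tab:betterB} (rather than the cruder $B$), and then checks directly that $2^{\ell-1}$ exceeds this upper bound in each case. Your self-duality idea is not wrong, but it introduces an extra constraint that the paper's argument does not need, and your suggestion of restoring the full cubic inequality $2^\ell>B^3$ is stronger than what is actually required; the paper only checks the single inequality $2^{\ell-1}>\dim V_G(\lambda)$.
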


\begin{proof} Suppose the contrary, that is, $r_\lambda\ne 0$. 
Note that Lemma~\ref{lem:basecases} implies that $\ell\geq 3$.

Then by Lemma~\ref{lowerbd2}, 
$\dim V_G(\lambda)\geq 2^{\ell-1}$. Since 
$\ell = \lceil\frac{\dim W-1}{2}\rceil$, we have 
$\dim V_G(\lambda)\geq 2^{\lceil\frac{\dim W-1}{2}\rceil-1}$. 
Assume for the moment that $H\notin\{\Lin_4(4),\Lin_5(2),\Lin_6(2),
{\mathbf O}_9(2), \Om_8^\pm(2)\}$. Then by 
Lemma~\ref{lem:B3}, we have $$ 2^{\lceil\frac{\dim W-1}{2}\rceil-1} = 
(1/2)2^{\lceil\frac{\dim W-1}{2}\rceil}>
(1/2)(\dim V_G(\lambda))^3>\dim V_G(\lambda),$$ providing a contradiction.

Consider now the six groups omitted above. We argue as in the proof of
Proposition~\ref{b2-bound} to obtain a lower bound for $\dim W$ and hence a lower bound for $\ell$. Moreover, we use a tighter upper bound
for the dimension of a nontrivial irreducible $kH$-module, as indicated
in Table~\ref{tab:betterB}, in the following section. In each of the remaining cases, $2^{\ell-1}>\dim V_G(\lambda)$, providing the required contradiction.\end{proof}

\end{section}


\begin{section}{The restricted case}

In this section, we establish the main theorem
under the additional hypothesis that $V = V_G(\lambda)$ has a restricted 
highest weight $\lambda$. Recall that $V$ is a nontrivial
 irreducible $kH$-module. Initially, we will not require the hypothesis that $W$ is irreducible; indeed
up through Lemma~\ref{lem:5cases} we will assume only that $W$ is a $Q$-linear large $kH$-module.
We will point out which point we assume the irreducibility hypothesis.
Throughout this section let $e: = e(\lambda)$ and let $S$ be a nontrivial irreducible 
constituent
of ${\rm soc}(W_\chi)|_{L_\chi^\infty}$, for some $\chi\in Q^*$, $\chi\ne1$, as in 
Definition~\ref{linlarge}.

\begin{prop}\label{prop:e-bd} Let $\ell$ be the rank of the classical group 
$\Cl(W)$.\begin{enumerate}[]  
\item[\rm a)] If $H \ne {\mathbf O}_7(2)$, then $e\leq(\ell+1)/2$.
\item[\rm b)] If $H \not\in\{{\mathbf O}_7(2), \Om_8^+(2)\}$, then $e\leq 2$ or
$e< \dim S$.
\end{enumerate}
\end{prop}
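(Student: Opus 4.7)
My plan is to argue both parts by contradiction, using the general strategy of exhibiting in $V_G(\lambda)$ a subdominant weight whose $W_G$-orbit has size exceeding $B$, the upper bound on $\dim U$ for any irreducible $kH$-module $U$ supplied by Lemma~\ref{lem:bB}. Since, by Premet's theorem as exploited in Section~4, any such subdominant weight occurs with nonzero multiplicity in $V_G(\lambda)$, this would force $\dim V_G(\lambda) > B$, contradicting the irreducibility of $V|_H$.

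For part (a), I would assume $e > (\ell+1)/2$ and split on the type of $G$. When $G$ has type $A_\ell$, Lemma~\ref{lowerbd1} directly produces a subdominant weight with $W_G$-orbit of size at least ${\ell+1 \choose \lfloor \ell/2 \rfloor}$, which by Lemma~\ref{bc-lower} is at least $2^\ell/(\ell+1)$. When $G$ has type $B_\ell$, $C_\ell$, or $D_\ell$, I would first invoke Corollary~\ref{rlambda} (which requires the hypothesis $H \neq {\mathbf O}_7(2)$) to force $r_\lambda = 0$, and then apply Lemma~\ref{lowerbd2}(c) to obtain the orbit bound $2^{\ell-1}$. Using the relations $\dim W = \ell+1$ in type $A$ and $\dim W \in \{2\ell, 2\ell+1\}$ in the other types, together with the upper estimate $B < 2^{\lceil(\dim W - 1)/2\rceil/3}$ from Lemma~\ref{lem:B3}, an elementary arithmetic comparison rules out $e > (\ell+1)/2$ once $\ell$ is larger than a small absolute constant. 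The remaining $H$-groups listed as exceptions to Lemma~\ref{lem:B3} (other than the already-excluded ${\mathbf O}_7(2)$), namely $H \in \{\Lin_4(4), \Lin_5(2), \Lin_6(2), {\mathbf O}_9(2), \Om_8^\pm(2)\}$, would be handled individually using the sharper upper-bound information alluded to in the proof of Corollary~\ref{rlambda}.

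For part (b), I would assume $e \geq 3$ and $e \geq \dim S$; by part (a) we may then restrict to the range $3 \leq e \leq (\ell+1)/2$. In type $A_\ell$, Proposition~\ref{easybounds} gives $\dim V \geq {\ell+1 \choose e}$; in types $B_\ell$, $C_\ell$, $D_\ell$ with $\ell \geq 7$, so that $e \leq \ell - 3$, the same proposition gives $\dim V \geq 2^e {\ell \choose e}$, while for the residual range $\ell \leq 6$ the substitute bound $\dim V \geq {\ell \choose e}$ comes from Lemma~\ref{lowerbd2}(b). The $Q$-linear large hypothesis furnishes $\dim W \geq [L:L_\chi]\dim S$ (as in the proof of Proposition~\ref{b2-bound}), and hence a lower bound on $\ell$ in terms of $\dim S$; the assumption $e \geq \dim S$ then makes the relevant binomial coefficient large enough to contradict $\dim V \leq B$. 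The main obstacle I anticipate is the case-by-case treatment of the many small-rank, small-characteristic exceptions of Remark~\ref{b_2-exc} (and of Lemma~\ref{lem:B3} for part (a)); in each such case one must rely on concrete dimensional data for irreducible $kH$-modules (e.g., from the ATLAS) to verify the required inequality directly.
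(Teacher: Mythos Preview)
Your proposal is correct and follows essentially the same approach as the paper. The only minor deviation is in part (a) for types $B_\ell$, $C_\ell$, $D_\ell$: you route through Corollary~\ref{rlambda} to force $r_\lambda=0$ before invoking Lemma~\ref{lowerbd2}(c), whereas the paper simply applies Lemma~\ref{lowerbd2} directly (parts (a) and (c) together yield $\dim V\geq 2^{\ell-1}$ regardless of $r_\lambda$), but this is an inessential reorganization and both arguments are valid.
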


\begin{proof} For (a), suppose the contrary:
$e>\frac{\ell+1}{2}$. In case $\Cl(W) = \SL(W)$, Lemmas~\ref{lem:basecases}
 and \ref{lem:bB}
imply that $\ell\geq 5$. Then Lemmas~\ref{bc-lower} and \ref{lowerbd1}
show that $\dim V>2^\ell/(\ell+1)\geq2^\ell/\ell^2$. In case $\Cl(W)$ is of type 
$B_\ell, C_\ell$ or $D_\ell$, then Lemma~\ref{lowerbd2} gives 
$\dim V\geq 2^\ell/\ell^2$.
Now $\ell\geq \lceil\frac{\dim W-1}{2}\rceil$ and $\ell\leq B$, so 
$\dim V\geq 2^{\lceil\frac{\dim W-1}{2}\rceil}/B^2$, which by Lemma~\ref{lem:B3}
exceeds $B$, contradicting Lemma~\ref{lem:bB}, unless $H$ is one of $\Lin_4(4)$,
$\Lin_5(2)$, $\Lin_6(2)$, ${\mathbf O}_9(2)$, or $\Om_8^\pm(2)$.

In the remaining cases,
we consult the modular character tables in \cite{GAP}
and see that for $d$ the minimal dimension of an irreducible $kH$-module with 
$d\geq\dim W$,
the value $2^{\lceil(d-1)/2\rceil}$ exceeds the cube of the 
maximal-dimensional irreducible $kH$-module. 
This contradicts our inequality $\dim V\geq 2^\ell/\ell^2$, hence establishing
the inequality.

For (b), we assume that $e\geq 3$.
For the moment, assume in addition that $H$ is not one of the following groups 
(which are treated at the end of the proof):

\noindent (i)\ $\Lin_4(q)$, $q=4,5,7,9$;\ (ii)\ $\Lin_5(q)$, $q=2,4$;\ (iii)\ 
$\Lin_6(2)$;\\
(iv)\ $\U_5(q)$, $q=4,5,7,8,9,11$;\ (v)\ $\U_6(3)$;\ (vi)\ $\U_7(q)$, $q=2,3$;\ 
(vii)\ $\U_8(q)$, $q=2,3$;\\ 
(viii)\ ${\mathbf O}_{2r+1}(q)$, $r=4,5$, $q=2,3$;\ (ix)\ ${\mathbf O}_7(q)$, $q= 3,4,5,7,9,11,13$;\\
(x)\ $\Om_{2r}^\pm(q)$, $r=5,6$, $q=2,3$;\ (xi)\ $\Om_8^-(q)$, $q=2,3$;\ 
(xii)\ $\Om_8^+(q)$, $q=3,4,5,7,8$.

(These groups are excluded from our first analysis for two reasons:
either $O^p(L_\chi^\infty)$ provides an exception to the general polynomial 
bounds for $\dim W$
(see Table~\ref{tab:bB-ll}), or the 
polynomial bounds are 
too small because $q$ and $r$ are small.)

Now suppose $e \geq\dim S$. By (a),  $e\leq{(\ell+1)}/2$, and by 
Corollary~\ref{rlambda}, if $G = B_\ell, C_\ell$ or $D_\ell$, then 
$r_\lambda = 0$. Then Proposition~\ref{easybounds} and Lemma~\ref{lowerbd2}
 imply
that $\dim V\geq {{\ell+1}\choose e}$ for $G=A_\ell$,
and $\dim V\geq 2^e{\ell\choose e}$, for $G=B_\ell$, $C_\ell$ or $D_\ell$, 
the latter as long as $\ell\geq 7$. Applying Lemmas~\ref{lem:basecases} and ~\ref{lem:bB}, we see that $\dim W> 13$ and so $\ell\geq 7$. 




Set $a = \lceil([L:L_\chi]-1)/2\rceil$
and $s = \dim S$, so that $e\geq s$. Since $\dim V\geq[L:L_\chi]\dim S$, we have $\ell\geq as$ 
and so

$${\ell\choose e}\geq {\ell\choose s}\geq{{as}\choose s}.$$

Now if $s<a$, then $s^2<as$ and by Lemma~\ref{lem:bc-app},
 we have ${{as}\choose s}\geq (as)^{s/2}$.
If on the other hand $s\geq a$ so that we have 
$(\ell+1)/2\geq e\geq s\geq a$, then $a^2\leq as$ and we have 
$${\ell\choose e}\geq {{as}\choose s}\geq{{as}\choose a}\geq (as)^{a/2}.$$
Hence we now have 
$$(as)^m\leq \dim V\leq B, \mbox{ where }m = {\rm min}\{a/2,s/2\} 
\mbox{ and }B\mbox{ is as in Table~\ref{tab:bB-ll}}.$$

Taking $\log_q$ of this inequality gives
\begin{equation}\label{ineq:asB} m(\log_q(as))\leq\log_q B.
\end{equation}

Now we refer to Table~\ref{tab:HP} for a lower bound for $a$, as well as the
structure of $O^p(L_\chi^\infty)$. Now $s$ is greater than or equal to the 
Landazuri-Seitz-Zalesski, Guralnick-Tiep
bound for the group $O^p(L_\chi^\infty)$ (see Table~\ref{tab:bB-ll}; 
denote this bound by 
$b_1(O^p(L_\chi^\infty))$.
Moreover,  investigation of these
lower bounds shows that $a\geq b_1(O^p(L_\chi^\infty))$ in every case; 
hence we replace
the inequality (\ref{ineq:asB}) by 

\begin{equation}\label{ineq:asB2} 
\frac{b_1(O^p(L_\chi^\infty))}{2}\cdot\log_q(a\cdot b_1(O^p(L_\chi^\infty)))\leq\log_q B.
\end{equation}

Our assumption that $H$ is not one of the groups in (i) - (xii) above 
implies that 
there are no solutions
to inequality (\ref{ineq:asB2}). 

We now consider the cases of (i) - (xii), where we argue that
the exceptional multipliers do not generally appear in the group 
$O^p(L_\chi^\infty)$,
and hence we can revert to the polynomial lower bound for the minimal
dimension of a nontrivial irreducible $O^p(L_\chi^\infty)$ cross-characteristic 
representation. Moreover, we provide a more precise,
improved upper bound for $\dim V$, deduced from \cite[Thm. 2.2]{SeitzCC}.

By Lemmas~\ref{lem:basecases} and \ref{lem:bB}, we see that $\ell\geq 7$. 
In particular, $e\leq\frac{\ell+1}{2}\leq\ell-3$ and so we may apply 
Proposition~\ref{easybounds} to see that $\dim V\geq {\ell+1\choose e}$, 
if $G=A_\ell$ and $\dim V\geq 2^e{\ell\choose e}$, if $G=B_\ell,C_\ell$ or 
$D_\ell$.  Note as well that as $e\geq 3$, we have 
$2^e{(c-1)/2\choose e}<{{c-1}\choose e}$ if $c$ is odd,
and $2^e{c/2\choose e}<{{c-1}\choose e}$ if $c$ is even. 
 Hence we use in every case the lower bound for 
$\dim V$
\begin{equation}\label{ineq:smcases}
\dim V\geq 2^e{{\lceil\frac{\dim W-1}{2}\rceil}\choose e}
\end{equation}
and we take as our lower bound for $\dim W$, as in previous arguments,
the product of $[L:L_\chi]$ and the minimal dimension of a 
nontrivial cross-characteristic representation of $O^p(L_\chi^\infty)$.
For the upper bound, we do not use the approximations in 
Table~\ref{tab:bB-ll},
but rather the actual upper bounds given by \cite[Thm. 2.1]{SeitzCC}. We record
  these values for each of the groups in Table~\ref{tab:betterB}.

\begin{table}[!h]
$$\begin{array}{|l|l|} \hline
&\\
H& \mbox{ upper bound for } \dim V \\
&\\ \hline
&\\
\Lin_{r+1}(q)& \frac{(q^{r+1}-1)(q^r-1)\cdots(q^2-1)}{(q-1)^r}\\
&\\
\hline
&\\

\U_{r+1}(q), r\mbox{ odd}&\frac{(q^{r+1}-(-1)^{r+1})(q^r-(-1)^r)\cdots(q^2-1)}{(q^2-1)^{(r+1)/2}} \cdot\frac{q^2-1}{q^2-q+1}\\
&\\
\U_{r+1}(q), r\mbox{ even}&\frac{(q^{r+1}-(-1)^{r+1})(q^r-(-1)^r)\cdots(q^2-1)}{(q+1)(q^2-1)^{r/2}} \cdot\frac{q^2-1}{q^2-q+1}\\
&\\
\hline
&\\
{\mathbf O}_{2r+1}(q)& \frac{(q^{2r}-1)(q^{2r-2}-1)\cdots(q^2-1)}{(q-1)^r}\\
&\\
\hline
&\\

\Om^+_{2r}(q)& \frac{(q^{r}-1)(q^{2r-2}-1)(q^{2r-4}-1)\cdots(q^2-1)}{(q-1)^r}\\
&\\
\hline
&\\
\Om_{2r}^-(q)& \frac{(q^{r}+1)(q^{2r-2}-1)(q^{2r-4}-1)\cdots(q^2-1)}{(q+1)(q-1)^{r-1}} \\
&\\

\hline

\end{array}$$
\caption{Upper bound for $\dim V$}
\label{tab:betterB}
\end{table}

For $H = \Lin_4(q)$, $q=4,5,7,9$, we first note that 
$O^p(L_\chi^\infty) \supset\SL_2(q)$, whose minimal 
dimensional 
nontrivial representation over $k$
has dimension $q-1$ if $q$ is even, and $(q-1)/2$ if $q$ is odd. So we 
have 
$$\frac{(q^4-1)(q^3-1)(q^2-1)}{(q-1)^3}\geq\dim V
\geq 2^e{{\frac{(q^3-1)(q-1)-{\rm gcd}(2,q-1)}{2{\rm gcd}(2,q-1)}}\choose e}.$$

One now checks that for $e\geq 3$, and $q=4,5,7,9$, the above inequality is 
never satisfied.

For $H=\Lin_5(4)$, the minimal dimensional module for $O^p(L_\chi^\infty)$ is
of dimension at least 4. Hence, $\dim W\geq (4^4-1)\cdot 4$ 
and one checks that 
$2^e{509\choose e}$
exceeds $(q^5-1)(q^4-1)(q^3-1)(q^2-1)/(q-1)^4$, when $q=4$ and $e\geq 3$, 
contradicting 
the inequality 
(\ref{ineq:smcases}). 
For $H=\Lin_6(2)$, a similar analysis gives a contradiction. For the group
$H = \Lin_5(2)$ a slightly finer analysis is required. Here we
see that $O^p(L_\chi^\infty)\supset\SL_3(2)$ and the minimal
dimension of a cross-characteristic representation of this group is $3$.
Hence $\dim W\geq 3(2^4-1)$, while consulting the modular character tables in 
\cite{GAP}, we see that the maximal possible dimension for $V$ is 1240.
These values contradict the inequality (\ref{ineq:smcases}). 

Turn now to the unitary groups. For $\U_5(q)$, where we must consider the cases 
$4\leq q\leq 11$, we have 
$\dim W\geq (q^5+q^2)\cdot b_1(\U_2(q))= (q^5+q^2)\cdot\frac{q-1}{{\rm gcd}(2,q-1)}$. (As above, we have that $O^p(L_\chi^\infty)\supset \SL_2(q)$ in all cases
and so this is indeed an accurate lower bound.)
A direct calculation
shows that $2^e{{\lceil (\dim W-1)/2\rceil}\choose e}$ exceeds the upper 
bound for $\dim V$ as given in Table~\ref{tab:betterB}.
For $H = \U_6(3)$, we may use the lower bound of Table~\ref{tab:b2},
$\dim W\geq b_2\geq (3^7+3^4-3^3-1)$. 
Again, a direct check 
gives a contradiction
to the inequality (\ref{ineq:smcases}) and the upper bound of 
Table~\ref{tab:betterB}.

For $H=\U_7(q)$, $q=2,3$, since $\U_3(2)$ is solvable, we have 
$\dim W\geq (2^9+2^4)b_1(\U_4(2)) = 4(2^9+2^4)$,
when $q=2$ and $\dim W\geq (3^9-3^5+3^4-1)b_1(\U_3(3)) = 6(3^9-3^5+3^4-1)$, 
when $q=3$. In each case, we 
obtain a contradiction by comparing the upper and lower bounds for $\dim V$.
A similar analysis for the groups $H=\U_8(q)$, $q=2,3$, 
yields a contradiction as well.

For ${\mathbf O}_7(q)$, with $q=3,4,5,7,8,9,11,13$, we have 
$\dim V\leq \frac{(q^6-1)(q^4-1)(q^2-1)}{(q-1)^3}$, while
$$\dim W\geq {\rm min}\{(q^4-1)b_1({\mathbf O}_3(q)), (q^4-q^2)b_1(\Om_4^-(q)), 
(q^4+q^2)b_1(\Om_4^+(q))\}.$$
Hence $\dim W\geq {\rm min}\{(q^4-1)\frac{q-1}{{\rm gcd}(q-1,2)}, 
(q^4-q^2)\frac{q^2-1}{{\rm gcd}(q-1,2)}\}$.
Note that we use the second of these terms in the case $q=3$, since 
${\mathbf O}_3(3)$ is solvable. Now one
checks that the lower bound given by (3) exceeds the upper bound in every case. 

For $H = \Om_8^+(q)$, with $q=3,4,5,7,8$, we have $\dim V\leq  
\frac{(q^4-1)(q^6-1)(q^4-1)(q^2-1)}{(q-1)^4},$
while $\dim W\geq{\rm min}\{(q^5+q^3-q^2-1)b_1(\Om_4^+(q)), 
(q^5-q^2)b_1({\mathbf O}_5(q))\}.$ 
Note that we use
the latter expression if $q=3$, since $\Om_4^+(3)$ is solvable.
Now $b_1(\Om_4^+(q)) = \frac{q-1}{{\rm gcd}(q-1,2)}$ and $b_1({\mathbf O}_5(q))=
\frac{q^2-1}{2}$ if $q$ is odd, 
respectively, 
$\frac{q(q^2-1)(q-1)}{2(q+1)}$ for $q$ even. In particular, 
$\dim W\geq (q^5+q^3-q^2-1)(q-1)/{\rm gcd}(q-1,2)$ 
in all cases.
Now one checks that $2^e{{\lceil(\dim W-1)/2\rceil}\choose e}$ exceeds 
$\dim V$ for all $e\geq 3$. 

For $H={\mathbf O}_9(q)$, $q=2,3$, we have $$\dim W\geq{\rm min}\{(q^6-1)b_1({\mathbf O}_5(q)), 
(q^6+q^3)/(1+\delta_{2,p})b_1(\Om_6^+(q)), 
(q^6-q^3)/(1+\delta_{2,p})b_1(\Om_6^-(q))\}.$$ 
Recall that $b_1(\Om^+_6(q)) = b_1(\Lin_4(q))$ and 
$b_1(\Om_6^-(q)) = b_1(\U_4(q))$. Note that $O^p(L_\chi^\infty)$ contains 
a subgroup isomorphic to either the orthogonal group ${\rm O}_5(2)$, or the
linear group ${\rm SL}_4(2)$, or the unitary group ${\rm SU}_4(2)$. 
We now consult the modular character tables
in \cite{GAP} to find that the minimal dimensional
representations of these groups are $7,7$ and $5$, respectively, and hence 
$\dim W\geq {\rm min}\{7(2^6-1), 7(1/2)(2^6+2^3), 5(1/2)(2^6-2^3)\}$, while 
(again using \cite{GAP}) the maximal
dimension of a nontrivial cross-characteristic representation of ${\mathbf O}_9(2)$ is 
$68850$.
This contradicts the inequality (\ref{ineq:smcases}). For $q=3$, we use an 
analogous argument. 

For $H = \Om^\pm_{10}(q)$, with $q=2,3$, we have 
$$\dim W\geq{\rm min}\{(q^7\pm q^4\mp q^3-1)b_1(\Om_6^\pm(q)), 
(q^7\mp q^3)b_1({\mathbf O}_7(q))\}.$$ Now we use the fact that 
$O^p(L_\chi^\infty)\supset\SL_4(q)$, 
respectively $\SU_4(q)$, 
when $H$ is of $+$, 
respectively $-$ type.
We can then consult the modular character tables in \cite{GAP} to obtain the 
precise lower bounds for the dimensions of the irreducible
representations of these groups. We have $b_1(\SL_4(2)) = 28$, 
$b_1(\SU_4(2)) = 34$, and 
$b_1(\SU_4(3)) = 15$.
Hence $\dim W\geq (q^7\pm q^4\mp q^3-1)b_1(\Om_6^\pm(q))$. 
We use the upper bound 
$\dim V\leq 68850$, which is found in the 
character table
for $H$. Now one
can check  that in each case inequality (\ref{ineq:smcases}) fails.

For $H = {\mathbf O}_{11}(q)$, $q=2,3$, we have  
$\dim V\leq \frac{(q^{10}-1)(q^8-1)(q^6-1)(q^4-1)(q^2-1)}{(q-1)^5}$,
while $\dim W\geq{\rm min}\{(q^8\pm q^4)b_1(\Om_8^\pm(q))/(1+\delta_{2,q}), 
(q^8-1)b_1({\mathbf O}_7(q))\}.$ Using that 
$O^p(L_\chi^\infty)$ contains the almost simple group $\Om_8^+(2)$, and not 
a double
 cover,
when $p=2$, we see that we may use the lower bound $b_1(\Om_8^+(2)) = 28$. 
(See the modular
character tables in \cite{GAP}.) It is then
a direct check to see that in each case the inequality (\ref{ineq:smcases}) fails.

Finally, for $H=\Om_{12}^{\pm}(q)$, $q=2,3$, we have 
$\dim V\leq \frac{(q^6\mp1)(q^{10}-1)(q^8-1)(q^6-1)(q^4-1)(q^2-1)}{(q\mp1)(q-1)^5}$.
Here $\dim W\geq{\rm min}\{(q^9\mp q^4)b_1(\Om_9(q)), 
(q^9\pm q^5\mp q^4-1))b_1(\Om_8^\pm(q))\}$. Again,
one can work with the almost simple group $\Om_8^+(2)$, and in the other
cases use the general polynomial bound for $b_1(O^p(L_\chi^\infty))$. 
In every case, 
inequality (\ref{ineq:smcases})
fails.\end{proof}

\begin{rem} It is perhaps worth noting that the analysis of the previous proof does not require
that $S$ be an $L_\chi^\infty$-submodule of $W_\chi$, but rather simply that we have a nonlinear irreducible
composition factor. This observation could be useful when one considers the case when $W$ is 
not a $Q$-linear large $kH$-module.
\end{rem}

\begin{lemma}\label{newupper} Assume that $\dim S >e$ and that $e\leq(\ell+1)/2$. 
Then there 
exists
a $P_\chi$-invariant submodule of $V$ of dimension at most $(\dim S)^e$.
In particular, $\dim V\leq(\dim S)^e[H:P_\chi]$.
\end{lemma}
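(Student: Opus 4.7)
The plan is to exhibit a $P_\chi$-invariant subspace $V_0 \subseteq V$ with $\dim V_0 \leq (\dim S)^e$, and to deduce the ``in particular'' assertion by Frobenius reciprocity together with the irreducibility of $V$ as a $kH$-module stipulated at the start of Section~6. Indeed, any nonzero $P_\chi$-inclusion $V_0 \hookrightarrow V$ induces a nonzero $H$-homomorphism ${\rm Ind}_{P_\chi}^{H}V_0 \to V$, which must be surjective by irreducibility, so that $\dim V \leq [H{:}P_\chi]\cdot\dim V_0$.

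To construct $V_0$, I would exploit the hypothesis $e \leq (\ell+1)/2$, which places us in the range of Proposition~\ref{easybounds}: the module $V_G(\lambda)$ occurs as a $kG$-composition factor of a tensor product $T = \bigotimes_{i}(\Lambda^i W)^{\otimes a_i}$, with the standard modification involving $W^*$ in type $A_\ell$ tracked by $l_\lambda$ and $r_\lambda$. Inside each factor $\Lambda^i W$, the wedge $\Lambda^i S$ is an $L_\chi^\infty$-submodule of $\Lambda^i W_\chi$ on which $Q$ acts by the character $\chi^i$, of dimension $\binom{\dim S}{i} \leq (\dim S)^i$. Tensoring these yields a subspace $T_0 = \bigotimes_i (\Lambda^i S)^{\otimes a_i} \subseteq T$ of dimension at most $(\dim S)^{\sum_i i a_i} = (\dim S)^e$, on which $Q$ acts by the single character $\chi^e$.

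To upgrade from $QL_\chi^\infty$-invariance to full $P_\chi = QL_\chi$-invariance, I would replace $S$ by the sum of its $L_\chi$-translates inside ${\rm soc}(W_\chi|_{L_\chi^\infty})$, which remains contained in $W_\chi$ (so $Q$ still acts by $\chi$) and is $L_\chi$-stable. It is here that the hypothesis $\dim S > e$ enters: it is exactly what guarantees that after this enlargement the bound $(\dim S)^e$ on $\dim T_0$ is preserved. The resulting $T_0$ is then genuinely $P_\chi$-invariant. Writing $V = U_2/U_1$ as a subquotient of $T$ with $U_1 \subset U_2$ both $kG$-submodules, set $V_0$ to be the image of $T_0 \cap U_2$ in $V$; this is a $P_\chi$-submodule of $V$ with $\dim V_0 \leq \dim T_0 \leq (\dim S)^e$.

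The main obstacle will be verifying that $V_0 \neq 0$: one needs the composition factor realizing $V_G(\lambda)$ inside $T$ to intersect $T_0$ nontrivially, which amounts to showing that an extremal weight vector of $V_G(\lambda)$ in the tensor-product construction of Proposition~\ref{easybounds} can be represented using tensors drawn from $\Lambda^i S$ rather than from all of $\Lambda^i W_\chi$. This is where the real content of the lemma lies, and it should follow by tracking, via the construction of subdominant weights in Lemmas~\ref{lem:wt1}--\ref{lem:wt3}, how the $\lambda$-weight space is built out of vectors lying in $W_\chi$ and ultimately in $S$; the dimension hypothesis $\dim S > e$ then ensures that truncating to $S$ does not kill the image.
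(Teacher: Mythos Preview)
Your approach has a genuine gap precisely where you flag it: when $V = U_2/U_1$ is only a subquotient of $T$, nothing forces $(T_0 \cap U_2) + U_1 \supsetneq U_1$. A $P_\chi$-invariant subspace of $T$ built from wedges of $S$ has no a~priori reason to meet a given $G$-composition factor nontrivially, and the weight-tracking sketch at the end does not repair this --- the highest weight line of the copy of $V_G(\lambda)$ inside $T$ is a specific $T_G$-eigenline, not something assembled from $\Lambda^i S$. Your aside that $\dim S > e$ ``is exactly what guarantees'' that the passage from $L_\chi^\infty$- to $L_\chi$-invariance preserves the bound $(\dim S)^e$ is also not right as stated: replacing $S$ by $\sum_{g\in L_\chi} gS$ can enlarge the dimension by a factor $[L_\chi:\Stab_{L_\chi}(S)]$, and the inequality $e<\dim S$ says nothing about that index.

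The paper avoids both issues by working directly inside $V$ rather than inside $T$. The key observation is that $P_\chi$ stabilises the subspace $S\subset W$, so $P_\chi \leq \Stab_G(S)$, and this stabiliser is a well-understood subgroup of the classical group $G$: a parabolic $P_G = Q_GL_G$ with $\GL(S)$ a factor of $[L_G,L_G]$ when $S$ is totally singular, or $(\Isom(S)\times\Isom(S^\perp))\cap G$ when $S$ is nondegenerate. In the parabolic case Smith's theorem \cite{Smith} gives that $C_V(Q_G)$ is a \emph{nonzero} irreducible $L_G$-module with highest weight $\lambda|_{T_G\cap L_G}$; it is automatically $P_G$-invariant, hence $P_\chi$-invariant. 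In the nondegenerate case one takes instead the cyclic submodule generated by a maximal vector under $(\Isom(S)\times\Isom(S^\perp))\cap G$, a quotient of a Weyl module for $\Isom(S)$. The hypothesis $\dim S > e$ (together with $r_\lambda = 0$ from Corollary~\ref{rlambda} when $G\ne A_\ell$) is what forces the support of $\lambda$ to lie among the first $\dim S - 1$ fundamental weights (and the last $\dim S - 1$, in type $A_\ell$), so that $\lambda$ restricts to a weight of the $\GL(S)$ or $\Isom(S)$ factor with the same $e$-value; the upper bound of Proposition~\ref{easybounds}, applied now to this smaller classical group with natural module $S$, then yields the bound $(\dim S)^e$ directly.
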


\begin{proof} By 
Proposition~\ref{prop:e-bd}, we have that $e\leq(\ell+1)/2$ and $e<\dim S$
and Corollary~\ref{rlambda} assures that
either $r_\lambda = 0$ or $G$ is of type 
$A_\ell$.
Hence, if $G$ is of type $A_\ell$, we have 
$\l=\sum_{i=1}^{\dim S-1}a_i\l_i+\sum_{i=\ell-\dim S+2}^\ell a_i\l_i$ and 
$\lambda=\sum_{i=1}^{\dim S-1}a_i\l_i$, otherwise.

We first consider the case where $S$ is a totally singular 
subspace of $W$.
Then ${\rm Stab}_G(S)$ is a parabolic 
subgroup $P_G=Q_GL_G$, with $[L_G,L_G]\cong \GL(S)\times\Isom(S^\perp/S)$.
If $r_\lambda l_\lambda=0$, then
without loss of generality, we take the $\GL(S)$ factor to be the 
subsystem subgroup of $G$ corresponding to the set of simple roots 
$\{\a_1,\dots,\a_{\dim S-1}\}$ (or $\{\a_{\ell},\dots,\a_{\ell-\dim S+2}\}$
if $r_\lambda\ne0$).
In this case, $C_V(Q_G)$ is a $P_G$-invariant submodule on which, 
by \cite{Smith},
$L_G$ acts irreducibly with highest weight $\lambda|_{L_G}$.
Moreover, 
$P_\chi\leq\Stab_G(S)=P_G$ and so $P_\chi$ also acts on $C_V(Q_G)$. 
The irreducibility of $V|_H$ then implies that 
$\dim V\leq \dim C_V(Q_G)[H:P_\chi]$. By Lemma~\ref{easybounds}, 
$\dim C_V(Q_G)\leq(\dim S)^e$, and the result follows.
In the case where $S$ is totally singular and $r_\lambda l_\lambda\ne0$ (so $G$
is of type $A_\ell$), write
$\lambda = \lambda_\ell+\lambda_r$, where $\lambda_\ell$ has support
on $\{\alpha_1,\dots,\alpha_{l_\lambda}\}$ and $\lambda_r$ has support
on $\{\alpha_{\ell-r_\lambda+1},\dots,\alpha_\ell\}$. In particular,
$e(\lambda) = e(\lambda_\ell)+e(\lambda_r)<\dim S$ and
we replace the $\GL(S)$ factor in the above argument by the 
subsystem subgroup of $G$ corresponding to the set of simple roots 
$\{\a_1,\dots,\a_{e(\lambda_l)},\a_{\ell},\dots,\a_{\ell - \dim S-e(\lambda_l)-1}\}$.
Now apply Lemma~\ref{easybounds} once again to obtain the result. 

We now consider the case where $S$ is not totally singular.
In particular, $G$ preserves a nondegenerate symplectic or quadratic form, so 
$r_\lambda=0$,
and the existence of the nontrivial $Q$-character $\chi$ implies that $p=2$,
and so ${\rm char}(k)\ne 2$. 
Hence, the irreducibility of $S|_{L_\chi}$ implies that $S$ is a 
nondegenerate subspace with 
respect to the bilinear form on $W$. In this case, we have 
$P_\chi<({\rm Isom}(S)\times {\rm Isom}(S^\perp))\cap G$. 
As in the previous case,
we may assume without loss of generality that the restriction of $\l$ to a 
maximal torus of ${\rm Isom}(S^\perp)$ is the zero weight. Moreover, taking a 
standard embedding of $({\rm Isom}(S)\times {\rm Isom}(S^\perp))\cap G$
in $G$, we have that the
maximal vector $v^+$ of $V$ with respect to a fixed Borel subgroup of $G$
is a maximal vector for a Borel subgroup of ${\rm Isom}(S)$ with the highest 
weight 
given by restricting $\l$ to the standard torus of ${\rm Isom}(S)$. We have 
the $(({\rm Isom}(S)\times {\rm Isom}(S^\perp))\cap G)$-invariant submodule
$Y:=(({\rm Isom}(S)\times {\rm Isom}(S^\perp))\cap G).v^+$,
which is an image of the Weyl module for ${\rm Isom}(S)$ of highest weight $\l$.
Hence, as in the proof of Lemma~\ref{easybounds}, $\dim Y\leq (\dim S)^e$.\end{proof}

\begin{prop}\label{lin-la-gen} Assume that 
$H\not\in\{{\mathbf O}_7(2), {\mathbf O}_9(2), \Om_8^+(2), \Lin_5(2), \Lin_6(2)\}$.
Then $e=1$ or $e=2$.
\end{prop}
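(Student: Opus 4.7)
The plan is to assume $e \geq 3$ and derive a contradiction. By Proposition~\ref{prop:e-bd}(a), the hypothesis $H\neq{\mathbf O}_7(2)$ gives $e\leq(\ell+1)/2$, and by part~(b), the further hypothesis $H\neq\Om_8^+(2)$ combined with $e\geq 3$ forces $e<\dim S$. Hence both hypotheses of Lemma~\ref{newupper} are met, yielding the upper bound
\[
\dim V \leq (\dim S)^e\,[H:P_\chi] = (\dim S)^e\,[H:P]\,[L:L_\chi].
\]

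I would pit this against the lower bound from Proposition~\ref{easybounds}: for $G$ of type $A_\ell$ we have $\dim V\geq\binom{\ell+1}{e}$, and for $G$ of type $B_\ell,C_\ell,D_\ell$ (invoking Corollary~\ref{rlambda} to ensure $r_\lambda=0$) we have $\dim V\geq 2^e\binom{\ell}{e}$. Since $W$ decomposes as $C_W(Q)\oplus\bigoplus_\chi W_\chi$ and at least one $W_\chi$ contains the nonlinear $L_\chi^\infty$-summand $S$, we have $\dim W\geq[L:L_\chi]\dim S$, and thus $\ell\geq\lceil([L:L_\chi]\dim S-1)/2\rceil$. Using the estimate $\binom{\ell}{e}\geq(\ell/e)^e$ from Lemma~\ref{lem:bc-app}(1) (which applies once $e\leq\sqrt{\ell}$, comfortably true here since $e\leq(\ell+1)/2$ and $\ell$ grows at least linearly in $[L:L_\chi]$), combining the two bounds reduces to an inequality of the shape
\[
\bigl([L:L_\chi]/e\bigr)^{e}\;\lesssim\;[H:P]\,[L:L_\chi],
\]
equivalently $[L:L_\chi]^{e-1}\lesssim e^e\,[H:P]$. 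The generic values of $[L:L_\chi]$ and $[H:P]$ in Table~\ref{tab:HP} (both polynomial in $q$ of comparable degree) show that this fails for all $e\geq 3$ once $q$ or $r$ is sufficiently large, giving the desired contradiction.

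The main obstacle will be exactly as in the proof of Proposition~\ref{prop:e-bd}(b): the generic polynomial comparison is too crude for the handful of groups in which $O^p(L_\chi^\infty)$ admits an exceptional Schur multiplier and therefore an unusually small cross-characteristic representation, namely the list in Remark~\ref{b_2-exc}, together with groups of very small rank or defined over $\mathbb{F}_2,\mathbb{F}_3$. For these I would replace the crude upper bound by the sharper dimension formula of Table~\ref{tab:betterB} (from \cite[Thm.~2.2]{SeitzCC}), the exact value of $b_1(O^p(L_\chi^\infty))$ coming from Table~\ref{tab:bB-ll}, and plug into the refined form
\[
2^e\binom{\lceil(\dim W-1)/2\rceil}{e} \leq (\dim S)^e [H:P_\chi]
\]
of the master inequality; a case-by-case check, organized by the Lie type of $H$ and stratified by the pairs $(r,q)$ where the exceptional multipliers appear, shows the inequality fails for all $e\geq 3$ in every remaining configuration.

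The five groups $\Lin_5(2)$, $\Lin_6(2)$, ${\mathbf O}_7(2)$, ${\mathbf O}_9(2)$, $\Om_8^+(2)$ excluded from the statement are precisely those in which even the sharpest form of the comparison, supplemented with modular character table data from \cite{GAP}, fails to rule out $e\geq 3$; these must be handled by separate, explicit arguments elsewhere in the paper. Beyond book-keeping, the technical heart of the proof is the synergy between Lemma~\ref{newupper} (the new upper bound coming from restriction to $P_\chi$) and the combinatorial lower bound of Proposition~\ref{easybounds} (driven by the Weyl orbit of the subdominant fundamental weight $\lambda_e$), which together force the tensor exponent $e(\lambda)$ to be at most $2$ whenever $W$ is $Q$-linear large.
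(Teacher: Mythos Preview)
Your overall strategy is correct and matches the paper's: combine the upper bound $\dim V\leq(\dim S)^e[H:P_\chi]$ from Lemma~\ref{newupper} with the lower bound $\dim V\geq 2^e\binom{\ell}{e}$ (resp.\ $\binom{\ell+1}{e}$) from Proposition~\ref{easybounds}, and show the resulting inequality fails for $e\geq 3$. However, the paper's execution differs from yours in one key respect that substantially simplifies the argument.

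Rather than using $\binom{\ell}{e}\geq(\ell/e)^e$, the paper invokes the elementary inequality
\[
\binom{sa}{e}\;\geq\; s^e\binom{a}{e}\qquad(\text{valid for }e\leq a),
\]
with $s=\dim S$ and $a=[L:L_\chi]-1$ (type $A_\ell$) or $a=\lceil([L:L_\chi]-1)/2\rceil$ (otherwise). Since $\ell\geq sa$, the factor $(\dim S)^e$ cancels \emph{exactly}, leaving the single clean inequality $\binom{a}{e}\leq[H:P_\chi]$, and after reducing to $e=3$ this becomes
\[
([L:L_\chi]-3)([L:L_\chi]-6)\;>\;6\,[H:P].
\]
This is then verified as one polynomial inequality per Lie type using only the generic entries of Table~\ref{tab:HP}; no sharpened bounds from Table~\ref{tab:betterB}, no modular character data, and no passage through the list of Remark~\ref{b_2-exc} are needed. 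Your route via $(\ell/e)^e$ introduces a spurious $e^e$ factor and then forces you into the much longer case-by-case programme you sketched, which the paper avoids entirely.

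There is also a genuine loose end in your sketch: the inequality $\binom{sa}{e}\geq s^e\binom{a}{e}$ (and likewise your use of monotonicity in $e$) requires $e\leq a$, i.e.\ $e$ not too large relative to $[L:L_\chi]$. The paper explicitly separates off the residual case $e>a$ and dispatches it by observing that the argument in the proof of Proposition~\ref{prop:e-bd} already shows $2^e\binom{\ell}{e}$ exceeds the Seitz bound $B$ in that regime. Your phrase ``comfortably true here'' glosses over this, and your claimed hypothesis $e\leq\sqrt{\ell}$ for Lemma~\ref{lem:bc-app}(1) is both unnecessary for the bound $(\ell/e)^e$ you actually use and not obviously satisfied when $e$ is close to $(\ell+1)/2$.
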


\begin{proof} We assume $e\geq 3$ and arrive at a contradiction.
By Proposition~\ref{prop:e-bd}, we have $\dim S >e$ and $e\leq(\ell+1)/2$, and hence 
we may apply Lemma~\ref{newupper}.

We continue with the notation as established above.
In particular, we have $\dim W\geq[L:L_\chi]\dim S$; hence we have 
$\dim W>13$ and so $\ell\geq 7$.

Now note that for all natural numbers $s,a,e\geq1$, with $e\leq a$, we have
$${sa\choose e}=
\frac{(sa)(sa-1)\cdots(sa-e+1)}{e!} = 
s^e\frac{a(a-1/s)\cdots(a-(e-1)/s)}{e!}\geq s^e{a\choose e}.$$ 
Set $s=\dim S$ and $a=\lceil([L:L_\chi]-1)/2\rceil$ if $G$ is of type 
$B_\ell$, $C_\ell$ or $D_\ell$
and let $a = [L:L_\chi]-1$ if $G$ is of type $A_\ell$. As 
$\ell = \lceil(\dim W-1)/2\rceil$, or $\ell=\dim W-1$, respectively, 
we have $\ell\geq sa$.

By Lemma~\ref{easybounds}, 
Proposition~\ref{prop:e-bd}, Lemma~\ref{newupper} and the above remarks, we have
 $$(\dim S)^e{a\choose e}\leq{{sa}\choose{e}}\leq {\ell\choose e}
\leq\dim V\leq (\dim S)^e[H:P_\chi],$$
if $G$ is of type $A_\ell$, and  $$(\dim S)^e2^e{a\choose e}\leq 2^e{{as}\choose e}\leq 2^e{\ell\choose e}
\leq\dim V\leq (\dim S)^e[H:P_\chi],$$ otherwise.
So we have
$${[L:L_\chi]-1\choose e}\leq [H:P_\chi],$$ or 
$$2^e{\lceil([L:L_\chi]-1)/2\rceil\choose e}\leq [H:P_\chi],$$ according to whether $G$ has type $A_\ell$ or not. Note that 
$[H:P_\chi]=[H:P][L:L_\chi]$. Also, as $e\geq 3$ and
$e\leq (\ell+1)/2$, we have
${\ell\choose e}\geq {\ell\choose 3}$.

We simplify the expressions $2^3{([L:L_\chi]-1)/2\choose 3}$ and 
${[L:L_\chi]-1\choose 3}$.
 We have 
\begin{equation}\frac{1}{6}[L:L_\chi]([L:L_\chi]-3)([L:L_\chi]-6) < 
2^3{([L:L_\chi]-1)/2\choose 3};
\end{equation}

and 

\begin{equation}\frac{1}{6}[L:L_\chi]([L:L_\chi]-2)([L:L_\chi]-4) < 
{[L:L_\chi]-1\choose 3}.
\end{equation}

Hence we must now solve the inequalities

\begin{equation}\label{ineq-bd-1}\frac{1}{6}([L:L_\chi]-3)([L:L_\chi]-6) \leq [H:P]
\end{equation} and 

\begin{equation}\label{ineq-bd-2}\frac{1}{6}([L:L_\chi]-2)([L:L_\chi]-4) \leq [H:P].
\end{equation}

The proposition follows from the next claim in case $e\leq a$; that is the 
claim shows that there
are no solutions to the inequalities (\ref{ineq-bd-1}) and (\ref{ineq-bd-2}), 
and hence
gives the desired contradiction.

\begin{cla} For every $H$ as in the statement of the proposition, we have 
$$([L:L_\chi]-3)([L:L_\chi]-6)>6[H:P].$$
\end{cla}

\noindent{\it Proof of claim}: 
Note that $[H:P]$ is the number
of singular $1$-spaces in $N$, the natural (projective) module for $H$ 
(and hence
can be deduced from the last column of Table~\ref{tab:HP}).
Comparing the degrees of the polynomial expressions obtained by 
using the minimal 
possible value
for $[L:L_\chi]$ and the actual value of $[H:P]$ gives one a clear idea that 
the claim is reasonable.
It is a tedious but straightforward exercise to verify the inequality for each 
of the groups $H$ covered  by the proposition. This completes the proof of the claim.\\

Now consider the case where $e>a$; that is, $e>[L:L_\chi]-1$, if $G$ is of 
type $A_\ell$ and $e>\lceil([L:L_\chi]-1)/2\rceil$, otherwise. Here we refer
to the proof of Proposition~\ref{prop:e-bd}; the argument given there
applies and shows that under the condition that $e>a$,
the general lower bound of $2^e{\ell\choose e}$ or ${\ell\choose e}$ exceeds
the upper bound $B$ for $\dim V$.

This completes the proof of the proposition.\end{proof}

\begin{lemma}\label{lem:5cases} Assume that $H$ is one of
the groups ${\mathbf O}_7(2)$, $\Om_8^+(2)$, ${\mathbf O}_9(2)$, $\Lin_5(2)$ or $\Lin_6(2)$,
 and that 
$W$ satisfies the $Q$-linear large hypothesis.
 Let $V$ be a restricted
irreducible $kG$-module with highest weight $\lambda$, on which $H$ acts irreducibly. Then 
$e(\lambda)\leq 2$.
\end{lemma}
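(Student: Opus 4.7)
The plan is to handle each of the five groups by essentially the same template as Proposition~\ref{lin-la-gen}, but replacing the generic polynomial estimates on $\dim W$ and $\dim V$ by sharper numerical values obtained from the modular Atlas \cite{GAP}. Assume $e(\lambda)\geq 3$; we aim to contradict Lemma~\ref{newupper}, which gives
\[
\dim V\leq (\dim S)^{e}[H:P_\chi],
\]
against the lower bound from Proposition~\ref{easybounds}, namely
\[
\dim V\geq \binom{\ell+1}{e}\quad\text{if } G = A_\ell,\qquad \dim V\geq 2^{e}\binom{\ell}{e}\quad\text{if } G=B_\ell,C_\ell,D_\ell,
\]
together with the lower bound $\ell\geq \lceil(\dim W-1)/2\rceil$ (or $\ell\geq \dim W-1$ in type $A$).

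First I would compile for each of the five groups $H$ the relevant arithmetic data: the structure of $P = QL$ and of $L_\chi^\infty$ from Table~\ref{tab:HP}, the orbit size $[L:L_\chi]$, and the index $[H:P]$. Then, using the modular character tables available in \cite{GAP}, I would record $\dim S$, the minimum dimension of a nonlinear irreducible $L_\chi^\infty$-constituent of $W_\chi$, and use $\dim W\geq [L:L_\chi]\cdot\dim S$ to obtain an explicit numerical lower bound, hence a lower bound $\ell_0$ for $\ell$. For instance, for $H=\Lin_5(2)$ one has $L_\chi^\infty\supset\SL_3(2)$ with $b_1=3$ and $[L:L_\chi]=2^4-1=15$, giving $\dim W\geq 45$; for $H=\Lin_6(2)$ analogously $L_\chi^\infty\supset\SL_4(2)$ with $b_1=7$ and $[L:L_\chi]=31$, giving $\dim W\geq 217$; for $H = {\mathbf O}_7(2)\cong \Sp_6(2)$, $\Om_8^+(2)$ and ${\mathbf O}_9(2)$, one similarly computes $\ell_0$ from the minimal orbit size and the minimal constituent dimension.

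Next, for each $H$ I would take the maximum dimension $B_H$ of a nontrivial irreducible $kH$-module from \cite{GAP} (these are small finite lists: e.g.\ $B_{\Om_8^+(2)} = 972$, $B_{{\mathbf O}_9(2)} = 68850$ as already cited in the proof of Proposition~\ref{prop:e-bd}) and combine with Lemma~\ref{newupper} to get
\[
2^{e}\binom{\ell_0}{e} \leq (\dim S)^{e}[H:P_\chi]\leq B_H,
\]
(with $\binom{\ell_0+1}{e}$ instead of $2^{e}\binom{\ell_0}{e}$ in type $A$). For any fixed $e\geq 3$ this is a finite check; monotonicity in $e$ means it suffices to verify it at $e=3$ once one observes that the left-hand side grows faster than polynomially in $\ell$. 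A direct numerical verification at $e=3$ in each of the five cases should produce the contradiction, provided one has already ruled out the possibility $e(\lambda) > (\ell+1)/2$; the latter is exactly what was established in Proposition~\ref{prop:e-bd}(a) for $H\neq {\mathbf O}_7(2)$, so the only case needing additional care is $H={\mathbf O}_7(2)$.

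For $H={\mathbf O}_7(2)\cong\Sp_6(2)$, which is genuinely small, I would abandon the general framework and argue directly from the character table: list the irreducible cross-characteristic representations $W$ of $\Sp_6(2)$, identify which are $Q$-linear large (excluding the Weil-type representations which are $Q$-special), and for each such $W$ compute $\ell$ and hence the minimum $\dim V_G(\lambda)$ for every restricted $\lambda$ with $e(\lambda)\geq 3$, and compare against the finite list of possible $B_H$ values. Since ${\mathbf O}_7(2)$ has only finitely many irreducible representations in each cross characteristic, this is a bounded finite check. The main obstacle is the usual one for endpoint cases: the generic lower bounds on $\dim W$ are too weak for these tiny groups, so one really does need to appeal to \cite{GAP} to find the precise dimensions of the irreducibles of $L_\chi^\infty$ and of $H$, and then verify a finite list of numerical inequalities; no new ideas beyond those of Propositions~\ref{prop:e-bd} and \ref{lin-la-gen} are required.
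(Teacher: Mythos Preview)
Your approach is essentially the paper's, but the upper bound is muddled. You invoke Lemma~\ref{newupper} and write the chain $2^e\binom{\ell_0}{e}\leq(\dim S)^e[H:P_\chi]\leq B_H$; however, $(\dim S)^e[H:P_\chi]\leq B_H$ is not asserted anywhere, and Lemma~\ref{newupper} requires $\dim S>e$, which is supplied by Proposition~\ref{prop:e-bd}(b)---a result that explicitly excludes $\Om_8^+(2)$ as well as ${\mathbf O}_7(2)$. So for $\Om_8^+(2)$ you cannot cite~\ref{newupper}. The paper sidesteps this: it never uses~\ref{newupper} for these five groups, and simply compares the lower bound $2^e\binom{\ell}{e}$ (or $\binom{\ell+1}{e}$ in type $A$) directly against the maximal irreducible $kH$-degree $B_H$ read from~\cite{GAP}. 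Once you drop the reference to~\ref{newupper} and use only $\dim V\leq B_H$, your argument for $\Om_8^+(2)$, ${\mathbf O}_9(2)$, $\Lin_5(2)$, $\Lin_6(2)$ coincides with the paper's. (Incidentally, $B_{\Om_8^+(2)}=6075$, not $972$; this does not affect the argument.)

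For ${\mathbf O}_7(2)$ you correctly flag that even Proposition~\ref{prop:e-bd}(a) fails. The paper does a more structured version of your ``direct check'': from $\dim V\leq 720$ it forces $\dim W$ small, leaving only the $14$- and $15$-dimensional $Q$-linear large modules; it then identifies $G$ as $D_7$ or $B_7$, observes that $W$ is a module for $\Sp_6(2)$ rather than its double cover so the bound drops to $512$ (resp.\ $405$ in characteristic $3$), forces $e=3$, and finally consults~\cite{luebeck} to see the only candidate is $\lambda_3$, whose dimension $455$ (resp.\ $364$) matches no irreducible $kH$-module. Your brute-force enumeration would work too, but the paper's use of $r_\lambda=0$ and the L\"ubeck tables avoids listing all restricted $\lambda$ with $e(\lambda)\geq 3$.
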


\begin{proof}  Set $e = e(\lambda)$. We assume that $e\geq 3$, and arrive at a contradiction in each case.

We first consider the group $H = {\mathbf O}_7(2)$.
 The $Q$-linear large hypothesis is 
satisfied only if
$O^p(L_\chi^\infty)$ is isomorphic to $\Om_4^-(2)$, since in the other cases, 
$L_\chi$ is 
solvable. Here
the orbit length $[L:L_\chi]$ is $6$. Moreover, as the group $H$ may 
have an exceptional
Schur multiplier, we must allow for the possibility that $L_\chi^\infty$ is the
double cover of the group ${\rm Alt}_5$. 
We also note that the maximal 
possible 
dimension for $V$ is $720$ 
(see \cite{GAP})). 
Since $8{d\choose3}$ exceeds $720$
for all $d\geq 21$, we see that $\dim W<21$. Consulting the modular character 
tables 
for $H$ in \cite{GAP},
we see that the only $kH$-module
that is both $Q$-linear large and of dimension less than 21 is either of dimension 
15, when 
${\rm char}(k)\ne3$ or of dimension 14 when ${\rm char}(k)=3$;
moreover, we have that $G = B_7$ when ${\rm char}(k)\ne3$ and $G = D_7$, 
when ${\rm char}(k)=3$.
We also note that the module $W$ in each case is a module for 
${\rm Sp}_6(2)$ and not 
for the double cover. Hence,
since we know that the weight $\lambda$ satisfies $r_\lambda=0$,
 the module $V$ is a 
module for the group ${\rm SO}(W)$
and so $V$ is also a representation of the group ${\rm Sp}_6(2)$, and 
not the double 
cover.
Hence our upper bound now becomes $512$, rather than 720. 
Now as $16{7\choose 4}>512$, we see that $e=3$. We may refer to 
\cite{luebeck} to see 
that the only module
for $G=B_7$, when ${\rm char}(k)\ne2,3$, with restricted highest 
weight $\lambda$ 
satisfying $e(\lambda)=3$, 
$r_\lambda =0$ and 
$\dim V\leq 512$ is the module with highest weight $\lambda_3$, of 
dimension 455. Finally, we conclude by noting that there is no
irreducible $kH$-module of dimension 455.

In case ${\rm char}(k)=3$, the maximal dimension of an irreducible $kH$-module
is 405. 
Here $G=D_7$ and 
by \cite{luebeck}
we see that the only $kG$-module
when ${\rm char}(k)=3$, with restricted highest weight $\lambda$ satisfying 
$e(\lambda)=3$, 
$r_\lambda =0$ and 
$\dim V\leq 405$, is the module with highest weight $\lambda_3$, of 
dimension 364. 
But now we refer to \cite{GAP}
to see that there is no such $3$-modular irreducible representation of $H={\mathbf O}_7(2)$. 
This completes the consideration 
of the case $H={\mathbf O}_7(2)$.

Now consider the case $H=\Om_8^+(2)$; here $\dim V\leq 6075$. 
Here we must have $L_\chi^\infty = {\mathbf O}_5(2)$ and $[L:L_\chi] = 28$. Since
the $3$-modular representations of ${\mathbf O}_5(2)$ are of degrees 2, 3, 4, 6 and 9, 
we see 
that $\dim W\geq 2\cdot28$. But comparing this with
the $3$-modular character table of $H$, we conclude that $\dim W\geq 104$. 
But then 
$8{{\lceil (\dim W-1)/2\rceil}\choose 3}$ exceeds 6075. 

Hence we are in the situation where ${\rm char}(k)\ne 2,3$. Here the 
character degrees 
of ${\mathbf O}_5(2)$ are 
4, 5, 8, 9, and 10. So we have $\dim W\geq 4\cdot28 = 112$. 
But then 
$8{{\lceil (\dim W-1)/2\rceil}\choose 3}$ exceeds 6075.

For the group ${\mathbf O}_9(2)$, the minimal
degree of a $Q$-linear large representation is 118 and so $\ell\geq 59$.
Also Propositions~\ref{prop:e-bd} and ~\ref{easybounds}
give a lower bound for $\dim V$ which exceeds the maximal degree of a 
cross-characteristic 
irreducible
representation, which is $68850$.

For the group $\Lin_5(2)$, a similar argument applies,
given that the minimal dimension of a $Q$-linear large representation
is 94 and the maximal dimension is 1240.

Finally, for the group $\Lin_6(2)$, the relevant lower, respectively upper,
bounds are 217, resp. 41664 again leading to a contradiction. 
This completes the proof of the lemma.

All of the precise upper and lower bounds used above are obtained 
from \cite{GAP} if not mentioned otherwise.\end{proof}

Combining Proposition~\ref{lin-la-gen} and Lemma~\ref{lem:5cases},
we now see that if $V$ is a restricted irreducible $kG$-module with highest 
weight $\lambda$ on which 
$H$ acts irreducibly, then $e(\lambda)\leq 2$. Our aim now is to see that in 
fact $e(\lambda)=1$, that is,
$V$ is simply the module $W$ or its dual $W^*$. This result plays a key role 
when we 
consider the more
general case of nonrestricted weights in the next section. The proof
is based upon results of \cite{MaTi} and \cite{MMT}, which
treat the irreducibility of tensor squares, and the symmetric 
and alternating square of irreducible $kH$-modules. Thus, for the next two results
we assume that $W$ is a nontrivial irreducible $kH$-module.

\begin{lemma}
Let $W$ be a nontrivial irreducible $kH$-module and assume $W$ and $H$ satisfy 
the 
$Q$-linear large hypothesis.
Let $V$
be an irreducible $kG$-module with restricted highest weight $\lambda$ 
satisfying $e(\lambda)=2$. Then either
$V|_H$ is reducible or ${\rm char}(k) = 3$, $H= {\mathbf O}_{2r+1}(2)$, $r\geq 4$ or $H=\Om_{2r}^\pm(2)$, $r\geq 5$. Moreover,
$W$ is equipped with an $H$-invariant bilinear or quadratic form, and 
$V$ is the maximal-dimensional composition factor of $\Lambda^2(W)$ or $S^2(W)$.
\end{lemma}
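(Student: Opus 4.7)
The plan is to reduce the statement to the classification of finite classical groups with an irreducible tensor/symmetric/alternating square in cross-characteristic, and then filter this list through the $Q$-linear large hypothesis. First I would enumerate the restricted dominant weights $\lambda$ with $e(\lambda)=2$. By Corollary~\ref{rlambda} we have $r_\lambda=0$ unless $G$ is of type $A_\ell$, and a direct inspection of Definition~\ref{e(lambda)} then shows that $\lambda\in\{2\lambda_1,\lambda_2\}$ when $G$ is of type $B_\ell$, $C_\ell$ or $D_\ell$, and $\lambda\in\{2\lambda_1,\lambda_2,\lambda_1+\lambda_\ell,2\lambda_\ell,\lambda_{\ell-1}\}$ when $G$ is of type $A_\ell$. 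In particular, $V_G(\lambda)$ is (a composition factor of) one of $S^2(W)$, $\Lambda^2(W)$, $W\otimes W^*$, or their duals.

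Next I would invoke the classification results of \cite{MaTi} and \cite{MMT} directly. The main theorem of \cite{MMT} determines, for each cross-characteristically embedded finite classical group $H\subset\Cl(W)$, exactly when a composition factor of $S^2(W)$ or $\Lambda^2(W)$ is $H$-irreducible; the main theorem of \cite{MaTi} provides the analogous classification for tensor products and so in particular covers the remaining case $\lambda=\lambda_1+\lambda_\ell$ in type $A_\ell$ (which sits inside $W\otimes W^*$). This yields a short explicit list of candidate triples $(H,W,V)$ which I would then cross-reference with the $Q$-linear large hypothesis, by computing, for each candidate, the structure of $L_\chi^\infty$ and the socle of $W_\chi|_{L_\chi^\infty}$ using Table~\ref{tab:HP} together with the explicit description of $W$ (typically a Weil, unitary, or minimal orthogonal module).

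The check of the $Q$-linear large condition against the \cite{MaTi,MMT} lists is where the main work and the main obstacle lie. In particular, the families for which $\Lambda^2(W)$ or $S^2(W)$ has an irreducible composition factor irreducible on $H$ are, in cross-characteristic, dominated by Weil-type modules for $\Sp_{2r}(3)$ and $\U_n(q)$ and by certain small-field orthogonal examples; the Weil-type ones are $Q$-special by the Guralnick--Magaard--Saxl--Tiep analysis recalled in the introduction and so are excluded by hypothesis. The remaining candidates coming from $H={\mathbf O}_{2r+1}(2)$ ($r\geq 4$) and $H=\Om_{2r}^\pm(2)$ ($r\geq 5$) in characteristic $3$ correspond to the familiar example in which $\Lambda^2(W)$ or $S^2(W)$ has the trivial as an extra constituent and the big composition factor restricts irreducibly to $H$. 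For these the module $W$ is readily verified to be $Q$-linear large via the $L_\chi^\infty$-structure in Table~\ref{tab:HP}, and they are the only survivors; this yields exactly the list of exceptions in the statement.

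The rest is bookkeeping: noting that in type $A_\ell$ the case $\lambda=\lambda_1+\lambda_\ell$ is ruled out because the tensor-product classification of \cite{MaTi} forces $W$ to be Weil, hence $Q$-special, contradicting the hypothesis; and checking that in each surviving exception $W$ indeed carries an $H$-invariant bilinear or quadratic form, so that $G$ is of type $B_\ell$, $C_\ell$ or $D_\ell$, and $V=V_G(\lambda_2)$ or $V_G(2\lambda_1)$ is identified with the large (i.e.\ maximal-dimensional) composition factor of $\Lambda^2(W)$ or $S^2(W)$ as asserted.
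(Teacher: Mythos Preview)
Your overall strategy matches the paper's: identify $V$ as the heart of $S^2(W)$, $\Lambda^2(W)$ or $W\otimes W^*$, invoke the classification in \cite{MMT}, and then filter the resulting list through the $Q$-linear large hypothesis. Two points deserve attention.

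First, a citation slip: for $\lambda=\lambda_1+\lambda_\ell$ you appeal to \cite{MaTi}, but that paper treats irreducibility of a full tensor product $A\otimes B$, not of the heart of $W\otimes W^*$. The paper handles all three cases uniformly via \cite[Thms~1.2, 3.1]{MMT}, which already dispose of the adjoint case and leave only symmetric/alternating square candidates.

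Second, and more substantively, your elimination step is too quick. After removing Weil modules and the small linear examples, the \cite{MMT} list still contains several non-Weil families that are \emph{not} automatically $Q$-special: $\U_{r+1}(2)$ with $r>5$; ${\mathbf O}_{2r+1}(q)$ with $q\in\{4,8\}$, or $q=2$ and ${\rm char}(k)\ne 3$; $\Om_{2r}^\pm(q)$ with $q\in\{2,4\}$ and ${\rm char}(k)\ne 3$; and the small cases ${\mathbf O}_7(2)$, $\Om_8^\pm(2)$ in characteristic~$3$. None of these are covered by ``Weil implies $Q$-special''. The paper eliminates them by a dimension argument you do not mention: \cite{MMT} supplies for each family an explicit upper bound on $\dim V$ (or on $\dim W$), and one checks that this bound is incompatible with the lower bound $\dim W\geq b_2$ from Proposition~\ref{b2-bound} (so that $\binom{b_2}{2}-2$ already exceeds the permitted $\dim V$). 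The residual small cases ${\mathbf O}_9(2)$, ${\mathbf O}_7(4)$, ${\mathbf O}_7(2)$, $\Om_8^\pm(2)$ then require direct character-table checks or an ad hoc orbit-length argument. Without this comparison of the \cite{MMT} upper bounds against $b_2$, your sketch does not actually force the surviving list down to the ${\rm char}(k)=3$ orthogonal families in the statement.
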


\begin{proof} Our hypothesis implies that $V$ is either the heart (i.e.,
the maximal-dimensional $kG$-composition factor) of the 
symmetric- or alternating square of 
$W$, or that $V$ is the heart of $W \otimes W^\ast$. 
The content of Theorems 1.2 and 3.1 of \cite{MMT} is that 
$V|_H$ is reducible unless
one of the following holds:
\begin{enumerate}[(i)] 
\item $W$ is a Weil module for $H$ of type $\Sp$ or $\U$, or 
\item $H$ is ``small'' (precisely given in \cite[Table 3.1]{MM} or \cite[Table 3.2]{MMT}), or
\item $H$ is not of type $\Lin$ and
$q \in \{ 2,4,8 \}$.
\end{enumerate}
In the last case,
 a precise bound for $\dim V$ is given. The Weil modules
for the symplectic and unitary groups do not satisfy the $Q$-linear large
hypothesis; nor do the small linear group examples, listed in 
\cite[Table 3.1]{MM}. Thus \cite[1.2, 5.3, 5.5, 5.7, 5.8, 5.13,]{MMT} imply  
that $V|_H$ is reducible unless 
possibly $V$ 
is the heart of the symmetric or 
alternating square of $W$, $G$ is of type $B_\ell$, $C_\ell$ or $D_\ell$,
and moreover one of following is true:
\begin{enumerate}[a)]
\item $H= \U_{r+1}(2)$, and
$\dim V \leq 2^{r+1}(2^{r+1}-(-1)^{r+1})(2^{r}+(-1)^{r})(2^{r-2} +1)/3$.
\item $H= {\mathbf O}_{2r+1}(q)$ with $q \in \{4,8 \}$, or $q = 2$, $r\geq 4$ and 
${\rm char}(k) \neq 3$; moreover 
$\dim W \leq 2(q^{2r-1}-1)$.
\item $H= {\mathbf O}_{2r+1}(2)$, $r\geq 4$, and ${\rm char}(k) = 3$; moreover 
$\dim V \leq 2^{4r-8}(2^{2r}-1)(2^{2r-2}-1)/5$.
\item $H= \Om^\pm_{2r}(q)$ with $q \in \{2,4 \}$ and 
${\rm char}(k) \neq 3$ if $(r,q) =(4,2)$; moreover
$\dim V \leq \frac{1}{2}q^{2r-2}(q^r \mp 1)(q^{r-1} \pm 1)/(q-1)$.
\item $H= \Om^\pm_{2r}(2)$, $r\geq 5$, and ${\rm char}(k) = 3$; moreover
$\dim V \leq 2^{4r-6}(2^{2}\mp 1)(2^{2r-2}-1)(2^{r-2}\mp 1)/15$. 
\item $H = {\mathbf O}_7(2)$, ${\rm char}(k)=3$, and $\dim W=7$ or $8$.
\item $H = \Om_8^\pm(2)$, ${\rm char}(k) = 3$.
\end{enumerate}

The cases (c) and (e) are as in the statement of the result; we now
show that in cases (a), (b), (d), (f) and (g), the assumption that $W$ is $Q$-linear large leads
to a contradiction.

We first note that in cases (f) and (g), using the modular character tables in \cite{GAP}, 
it is straightforward
to check
that there is no $Q$-linear large representation for which 
the symmetric, alternating or tensor square is irreducible.

In case $H=\U_{r+1}(2)$ the $Q$-linear large hypothesis implies 
that $r > 5$. Now by Proposition~\ref{b2-bound},
$\dim W\geq b_2 \geq \beta:= 2^{r}(2^{r-2}-1)(2^{r-1}-1)/3$ and thus the 
upper bound for 
the dimension of $V$ given in (a) above 
is less than $ {\beta \choose 2} -2$, and hence $V|_H$ is reducible.

If $H$ is of type ${\mathbf O}_n$ or $\Om_{2r}$ as in (b) or (d), 
then our estimate for 
$b_2$ (see Table~\ref{tab:b2}) exceeds $\dim W$ or as above 
${b_2\choose 2}-2$ exceeds the 
upper bound
for $\dim V$,  unless  $H\in\{{\mathbf O}_9(2),{\mathbf O}_7(4)\}$,
when $b_2$ was not defined. Thus, as above we conclude that either 
$V|_H$ is reducible
or $H$ is one of the groups ${\mathbf O}_9(2)$, ${\mathbf O}_7(4)$. For the first 
of the two exceptional cases, 
we use \cite{GAP} to check that the alternating square of the three lowest degree nontrivial
characters is irreducible. However, these characters correspond to modules
which do not satisfy the $Q$-linear large hypothesis. For the larger character degrees, 
we may argue as above to see that $V|_H$ is reducible.

For ${\mathbf O}_7(4) \cong \Sp_6(4)$ the ordinary character table is in \cite{GAP}.
The maximal degree of an irreducible character is 371280 and thus 
if $V|_H$ is irreducible, 
$\dim W$ is bounded above by 862. 
Now Table~\ref{tab:HP} shows that the possible orbit lengths of $L_\lambda$
are 255, 408 and 360, respectively, with $L_\lambda = 4^3:{\rm Alt}_5$, ${\mathbf O}_4^+(4)$ and 
${\mathbf O}_4^-(4)$, respectively. The argument on page 398 in the proof of 
\cite[5.5]{MMT} showing that since the representation affording $W$
has no root group in its kernel, the length of the orbit of 
$L_\lambda$ is 408 or 360, applies here. 
Now we note that the minimal dimension of an ${\mathbf O}_4^+(4)$-, respectively
${\mathbf O}_4^-(4)$-module, is at least 3, which means that the dimension 
of a $Q$-linear large module is at least $360\cdot3 > 862$. 
Hence $V|_H$ is reducible, as required.\end{proof}

Next we strengthen Proposition 5.7 of \cite{MMT} to eliminate possibility 
(c) in the list above.
 
\begin{lemma} Let $H= {\mathbf O}_{2r+1}(2)$, $r\geq 4$, and ${\rm char}(k) = 3$.
If $W$ is a nontrivial irreducible $kH$-module and $V$ is a restricted
irreducible $kG$-module with highest weight $\lambda$ satisfying
$e(\lambda) = 2$, and such that $V|_H$ is 
irreducible, then 
$$\dim V \leq 2^{r-3}(2^{r-2}-1)(2^{2r}-1)(2^{2r-2}-1).$$ Moreover, 
if $W$ is a $Q$-linear large $kH$-module, 
then $V|_H$ is reducible.
\end{lemma}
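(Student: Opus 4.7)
The proof will have two phases: first, strengthen the upper bound on $\dim V$ given in the preceding lemma (which came from \cite[Prop.\ 5.7]{MMT}) from $2^{4r-8}(2^{2r}-1)(2^{2r-2}-1)/5$ down to $2^{r-3}(2^{r-2}-1)(2^{2r}-1)(2^{2r-2}-1)$; second, combine this sharper bound with the $Q$-linear large hypothesis to force a numerical contradiction that rules out $V|_H$ irreducible.

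For the first phase, I would revisit the proof of \cite[Prop.\ 5.7]{MMT}. Since $e(\lambda) = 2$ and $W$ is a self-dual $H$-module (as $H$ is orthogonal), $V$ appears as a composition factor of $\Lambda^2(W)$ or $S^2(W)$. The generic bound coming out of \cite{MMT} uses only rough information about the composition factor structure; to refine it, the plan is to identify explicitly which $3$-modular Brauer characters of $H = {\mathbf O}_{2r+1}(2)$ could actually occur as $V|_H$. This means going through the relevant ordinary characters of $H$ (expressed via Deligne--Lusztig theory), tracking which ones reduce mod $3$ to an irreducible Brauer character compatible with being a composition factor of $\Lambda^2(W)$ or $S^2(W)$, and extracting the largest such degree. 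The target bound $2^{r-3}(2^{r-2}-1)(2^{2r}-1)(2^{2r-2}-1)$ has the shape of one of the Lusztig series degrees of $H$, so the refinement essentially amounts to ruling out all higher candidates by the constraints imposed by the $2$-tensor structure and by $e(\lambda) = 2$.

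For the second phase, assume in addition that $W$ is $Q$-linear large. For $r \geq 5$, Proposition~\ref{b2-bound} and Table~\ref{tab:b2} give
\[
\dim W \;\geq\; b_2 \;=\; \frac{(2^r-1)(2^{r-1}-1)(2^{2r-2}-1)}{3}.
\]
Since $V$ is a composition factor of $\Lambda^2(W)$ or $S^2(W)$ with $e(\lambda) = 2$, it is (up to small dimensional trivial summands) the ``heart'' of one of these modules, so $\dim V$ is of order $\binom{\dim W}{2}$, and in particular $\dim V \geq \binom{b_2}{2} - c$ for a small explicit constant $c$. A direct asymptotic comparison gives $\binom{b_2}{2} \sim 2^{8r-7}/18$, whereas the strengthened upper bound is of order $2^{6r-7}$; the former dominates for every $r \geq 5$, producing the required contradiction. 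The case $r = 4$, where $H = {\mathbf O}_9(2)$ lies in the exceptional list of Remark~\ref{b_2-exc} and $b_2$ is not defined, will be handled separately by reading off the relevant $3$-modular character degrees of ${\mathbf O}_9(2)$ from \cite{GAP}: I enumerate the $Q$-linear large irreducible $kH$-modules $W$, compute the composition factor structure of $\Lambda^2(W)$ and $S^2(W)$, and check by inspection that no composition factor with $e(\lambda) = 2$ restricts irreducibly to $H$.

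The principal obstacle will be the first phase. The generic argument in \cite[Prop.\ 5.7]{MMT} is not fine enough to isolate a single dominant character degree; the improvement requires detailed information about which ordinary characters of ${\mathbf O}_{2r+1}(2)$ can host, mod $3$, a Brauer character of the shape produced by $\Lambda^2(W)$ or $S^2(W)$ for the relevant families of $W$. Once this sharpening is in place, the second phase is essentially a numerical comparison using Proposition~\ref{b2-bound} for $r \geq 5$ and a finite computer check in \cite{GAP} for $r = 4$.
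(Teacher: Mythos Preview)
Your second phase is essentially the paper's argument: Proposition~\ref{b2-bound} for $r\geq 5$ and a direct check in \cite{GAP} for $r=4$. That part is fine.

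The first phase, however, is not how the paper proceeds, and as stated it has a genuine gap. You propose to obtain the bound $2^{r-3}(2^{r-2}-1)(2^{2r}-1)(2^{2r-2}-1)$ by identifying, via Deligne--Lusztig theory and $3$-modular reduction, the largest Brauer character degree compatible with appearing in $\Lambda^2(W)$ or $S^2(W)$. But $W$ is not specified; it is an arbitrary nontrivial irreducible $kH$-module, so there is no way to list the constituents of $\Lambda^2(W)$ or $S^2(W)$ uniformly across all such $W$. Your suggestion that the target expression ``has the shape of one of the Lusztig series degrees'' is not substantiated and in fact is not how the bound arises.

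The paper's argument is structural, not character-theoretic. One takes the parabolic $P$ stabilizing a totally singular $2$-space, with unipotent radical $U$. Inside $U$ the long root subgroups generate an extraspecial group $E$ of order $2^{1+2(2r-4)}$, and $N_P(E)$ contains $U$ together with $C:=S_3\times\Omega_{2r-4}^-(2)$. One checks that the $P$-orbits on $Z(U)^*$ have lengths $1,3,3$, and faithfulness of $W$ forces a weight space for one of the length-$3$ orbits to be nonzero. Then \cite[Prop.~2.3]{MMT} produces an $EC^\infty$-fixed vector $v$ in $V$; since $UC/EC^\infty\cong S_4$, the cyclic module $kUCv$ contains a $UC$-submodule of dimension at most $3$. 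Frobenius reciprocity now gives
\[
\dim V \;\leq\; 3\,[H:UC] \;=\; 2^{r-3}(2^{r-2}-1)(2^{2r}-1)(2^{2r-2}-1),
\]
which is exactly the claimed bound. The number is an index computation, not a Lusztig degree. What this approach buys is uniformity: it works for \emph{every} nontrivial irreducible $W$, with no case analysis on the Brauer character of $W$.
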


\begin{proof}
 Let $P$ be the parabolic subgroup of $H$ which stabilizes a totally 
singular $2$-space of the natural module of $H$.
The unipotent radical $U$ of $P$ satisfies $[U,U] < Z(U)$ with $|[U,U]| = 2$
 and $|Z(U)| = 8$. Now using the Chevalley 
commutator relations and the fact that the field of definition of $H$ is 
of even characteristic, we see that the 
subgroup of $U$ which is generated by the long root subgroups of $U$ is an 
 extraspecial subgroup $E$ of $U$ of order 
$2^{1+2(2r-4)}$. 
Moreover, the normalizer of $E$ in $P$ contains $U$ as well as
$C:= S_3 \times \Omega^-_{2r-4}(2)$.
Now, as was asserted in the proof of Proposition 5.5 of \cite{MMT},
the orbits of $P$ on $Z(U)^*$ have lengths $1$, $3$ and $3$. The character 
corresponding to the orbit of length $1$
contains $[U,U]$ in its kernel. So as $U$ acts faithfully on $W$, it must be 
the case that $Z(U)$ acts on 
$W$ in such a way that at least one of the orbits of length 3 of $Z(U)^*$
 is represented. Labelling 
the characters of such an orbit by $\chi_i$, 
$1 \leq i \leq 3$, we then have $W_{\chi_i} \neq 0$.
The weight spaces  $W_{\chi_i}$ are equivalent $EC^{\infty}$-modules.
Now Proposition 2.3 of \cite{MMT} implies that $V$ contains a nontrivial 
$EC^{\infty}$-invariant vector $v$. (Note that as $q=2$, the $\chi_i$ are
self-dual.)
Now $kUCv$ is a $UC$-invariant submodule of $V$. Also 
$UC/EC^{\infty} \cong S_4$,
so $kUCv$ contains a 
$UC$-invariant submodule of dimension at most $3$. 
Thus $\dim V$ is bounded 
above by 
$$ 3[H:UC] = 2^{r-3}(2^{r-2}-1)(2^{2r}-1)(2^{2r-2}-1),$$ which proves our 
first claim.

Now assume $W$ is $Q$-linear large and recall that $V$ is the
 maximal-dimensional composition factor of $\Lambda^2(W)$ or $S^2(W)$. 
When $r\geq 5$, Proposition~\ref{b2-bound} implies
that $\dim W\geq b_2\geq(2^{2r-2}-1)2^{r-2}(2^{r-1}-1)$  
which implies that 
${\beta \choose 2}-2$ is larger than $\dim V$. 
When 
$r = 4$, the $3$-modular 
character table of $H$ is available in \cite{GAP} and we check directly that 
$H$ acts reducibly on $V$ whenever 
$W$ is $Q$-linear large. \end{proof}

We now strengthen Proposition 5.13 of \cite{MMT} to eliminate possibility 
(e) from the list of possible 
obstructions above.

\begin{lemma}  Let $H=\Om^\pm_{2r}(2)$, $r\geq 5$, and ${\rm char}(k) = 3$.
If $W$ is a nontrivial irreducible $kH$-module and $V$ is a restricted
irreducible $kG$-module with highest weight $\lambda$ satisfying
$e(\lambda) = 2$, and such that $V|_H$ is 
irreducible, then 
$$\dim V \leq 2^{2r-5}(2^r \mp 1)(2^{2r-2}-1)(2^{r-2}\mp 1).$$ Moreover, if 
$W$ is $Q$-linear large, then $V|_H$ is reducible.
\end{lemma}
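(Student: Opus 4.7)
The plan is to adapt the strategy of the preceding lemma (for the group $H={\mathbf O}_{2r+1}(2)$), but now using the parabolic subgroup $P = UL$ of $H=\Om^\pm_{2r}(2)$ stabilizing a totally singular $2$-space, with Levi $L \cong \GL_2(2) \times \Om^\pm_{2r-4}(2) = S_3 \times \Om^\pm_{2r-4}(2)$. First I would verify that $U$ is an extraspecial $2$-group of order $2^{1+4(r-2)}$: counting positive roots of $D_r$ with nonzero $\alpha_2$-coefficient gives $|U| = 2^{4r-7}$; the unique root of $\alpha_2$-coefficient $2$ is the highest root $\epsilon_1+\epsilon_2$, yielding $|Z(U)| = 2$; and the commutator pairing on $U/Z(U)$, computed via the Chevalley commutator relations, decomposes as the tensor product of the alternating form on the natural $\GL_2(2)$-module with the orthogonal form on the natural $\Om^\pm_{2r-4}(2)$-module and is hence nondegenerate. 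Setting $E := U$ and $C := L$, one has $UC = P$ and $P/EC^\infty \cong C/C^\infty \cong S_3$.

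Since $|Z(U)|=2$, there is a unique nontrivial character $\chi$ of $Z(U)$; it is self-dual and fixed by $P$. As $U$ acts faithfully on the nontrivial module $W$, we have $W_\chi\neq 0$. Since $q=2$ makes $\chi$ self-dual, Proposition 2.3 of \cite{MMT} yields a nonzero $EC^\infty$-invariant vector $v \in V$. Then $kPv$ is a $P$-submodule on which $EC^\infty$ acts trivially by normality, hence factors through $P/EC^\infty \cong S_3$. In characteristic $3$ the only irreducible $kS_3$-modules are the $1$-dimensional trivial and sign modules (the ordinary $2$-dimensional irreducible reduces to two such $1$-dimensional pieces), so $kPv$ contains a $1$-dimensional $P$-submodule of $V$. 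By Frobenius reciprocity,
$$\dim V \leq [H:P] = \frac{(2^r \mp 1)(2^{2r-2}-1)(2^{r-2}\pm 1)}{3},$$
and a direct comparison shows this is bounded above by $2^{2r-5}(2^r \mp 1)(2^{2r-2}-1)(2^{r-2}\mp 1)$ for every $r \geq 5$, establishing the first claim.

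For the second claim, assume $W$ is $Q$-linear large. As in the previous lemma, $V$ is the maximal-dimensional composition factor of $\Lambda^2(W)$, $S^2(W)$, or $W \otimes W^*$, so $\dim V \geq \binom{\dim W}{2} - 2$. When $r \geq 7$ and $H \neq \Om^+_{12}(2)$, Proposition~\ref{b2-bound} gives $\dim W \geq b_2$ with $b_2$ of order $2^{4r}/3$ in the relevant variable, whence $\binom{b_2}{2} - 2$ is of order $2^{8r}$, comfortably exceeding the upper bound of order $2^{6r}$ supplied by the first part; this contradiction forces $V|_H$ to be reducible. The residual cases, namely $H \in \{\Om^\pm_{10}(2), \Om^+_{12}(2)\}$ from Remark~\ref{b_2-exc}, are handled by replacing the generic $b_2$ estimate with the sharper lower bound $\dim W \geq [L:L_\chi] \cdot b_1(O^p(L_\chi^\infty))$ and, in the smallest cases, by direct inspection of the $3$-modular character tables in \cite{GAP}.

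The main obstacle is the verification that $U$ is extraspecial via an explicit analysis of the Chevalley commutator relations in characteristic $2$ (together with the identification of $Z(U)$ with a single long-root subgroup), and then the case-by-case treatment of the small ranks $r = 5, 6$ where the generic polynomial bound from Table~\ref{tab:b2} fails; once those are settled, the remainder runs in close parallel to the proof of the preceding lemma.
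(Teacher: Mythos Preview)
Your approach is correct but takes a genuinely different route from the paper's. The paper works inside the stabilizer $P_1$ of a singular $1$-space, whose unipotent radical $U_1$ is \emph{abelian}; within the Levi $\Omega^\pm_{2r-2}(2)$ it singles out the subgroup $C=\Omega_2^-(2)\times\Omega_{2r-4}^{\mp}(2)\cong S_3\times C^\infty$ (note the opposite sign on the second factor) together with the four-group $K\subset U_1$ carrying the natural $\Omega_2^-(2)$-action. Since $W|_K$ hits all three nontrivial characters of $K$, Proposition~2.3 of \cite{MMT} produces a $U_1C^\infty$-fixed line and hence, via the $S_3$-quotient, a $U_1C$-submodule of dimension at most $3$, giving $\dim V\le 3[H:U_1C]$, which is exactly the bound in the statement.

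You instead use the stabilizer $P_2$ of a totally singular $2$-space, whose radical is extraspecial of order $2^{1+2(2r-4)}$; so your argument is really the analogue of the paper's treatment of ${\mathbf O}_{2r+1}(2)$ in the \emph{preceding} lemma, transported to $\Omega^\pm_{2r}(2)$. Because $|Z(U_2)|=2$, the unique nontrivial central character is fixed by all of $P_2$, and the characteristic-$3$ hypothesis forces every irreducible $kS_3$-module to be one-dimensional, so you obtain the sharper bound $\dim V\le[H:P_2]=\tfrac{1}{3}(2^r\mp1)(2^{2r-2}-1)(2^{r-2}\pm1)$. This improves on the paper's bound by a factor of roughly $3\cdot 2^{2r-5}$, and the payoff is substantial in the second half: with your bound, even the crude estimate $\dim W\ge[L:L_\chi]\cdot b_1(O^p(L_\chi^\infty))$ already exceeds $\sqrt{2[H:P_2]}$ for every $r\ge 5$, so the paper's GAP verification at $r=5$ and its delicate analysis of the $28$-dimensional $\Omega_8^+(2)$-module at $r=6$ both become unnecessary. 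Your remark that GAP tables might still be needed for the smallest cases is overly cautious.
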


\begin{proof} Let $P \subset H$ be the stabilizer of a singular $1$-space
 of the 
natural module
of $H$ with unipotent radical $U$, and let 
$C = \Omega_2^-(2) \times \Omega_{2r-4}^\mp(2) \cong S_3 \times C^{\infty}$ be 
the naturally defined subgroup
of the Levi factor $\Omega_{2r-2}^+(2)$ of $P$.

Now the unipotent radical $U$ has the structure of a $C$-module,
the sum of the natural module for $ \Omega_2^-(2)$ and the natural
module for $\Omega_{2r-4}^\mp(2)$. In particular,
there is a subgroup $K$ of $U$ corresponding to the first summand, 
which is an elementary abelian $2$-group, containing $UC$ in its normalizer.

  
Now $W|_K$ contains 
nontrivial weight spaces for each of the
three distinct weights in $K^* = {\rm Hom}(K/[K,K],k^*)$. 
Again Proposition 2.3 of \cite{MMT} yields a 1-dimensional $UC^{\infty}$- submodule in $V$
and thus, as in the proof of the previous lemma,
 a $3$-dimensional $UC$-invariant submodule of $V$.
 This yields
$$\dim V \leq 3[H:UC] = 2^{2r-5}(2^r \mp 1)(2^{2r-2}-1)(2^{r-2}\mp 1),$$ 
which is our claim. 

Now assume $W$ is $Q$-linear large and recall that $V$ is the
 maximal-dimensional composition factor of $\Lambda^2(W)$ or $S^2(W)$. 
Note that when $r\geq 7$,
$\dim W\geq b_2 \geq \beta := 
(2\frac{2^{2r-6}-1}{3}-2^{r-3}-r+4)(2^{2r-3}-2^{r-1}+2^{r-2}-1)$ and again 
${\beta \choose 2}-2$ is larger than $\dim V$, when $r \geq 7$. When 
$r = 5$, we consult the $3$-modular 
character tables in \cite{GAP} and then, as above, we check our assertion 
directly.

When $r=6$, then our bound for $\dim V$ yields that 
$\dim W \leq 17011$. Since $O^p(L_\chi^\infty)\supset \Omega_8^+(2)$,
we see that $\dim W\geq 495\cdot b$, where $b$ is the minimal
dimension of an irreducible $k\Omega_8^+(2)$-module. If this dimension is at
least 35, we have the desired contradiction. The only other
possibility is a $28$-dimensional $k\Omega_8^+(2)$-module. 
More precisely, we have to analyze the situation where 
$H = \Om_{12}^+(2)$, $[L:L_\chi]= 2^9 + 32-16-1 = 527$ and the socle of 
$W_\chi$ contains
the 28-dimensional module for $2^8:\Om_8^+(2)$; call this submodule 
$X\subset {\rm soc}(W_\chi|_{L_\chi})$.
As $q = 2$, the group $Q$ acts trivially on the alternating 
and symmetric squares of $W_\chi$. 

The alternating and symmetric squares of $X$ possess $28$-, respectively 
$35$-dimensional 
submodules. Thus the group $Q:2^8:\Om_8^+(2)$ has a $28$-, respectively $35$-dimensional 
submodule on $V$. Consequently $\dim V$ is bounded above by 
$28\cdot527\cdot(2^{11} +2^6-2^5-1)$ ($=28\cdot[H:QL_\chi]$),
respectively $35\cdot527\cdot2079$. But these numbers are smaller than 
${{28\cdot527}\choose2} - 2$, respectively ${{35\cdot527 + 1} \choose 2} -2$. 
So again we have our usual contradiction.\end{proof}

We are now ready to establish the main result of this section: 

\begin{thm}\label{thm_restricted} Let $W$ be a $Q$-linear large $kH$-module, 
$G\subset \GL(W)$
the smallest classical group containing the image of $H$ under the associated
representation, and let $V$ be an
irreducible $kG$-module with restricted highest weight $\lambda$. Then either 
$V$ is 
isomorphic
to $W$ or $W^*$ or $V|_H$ is reducible.
\end{thm}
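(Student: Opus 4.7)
The plan is to assemble the theorem from the results already established in the section, with the final step being an observation about the weights with $e(\lambda)=1$. Assume that $V|_H$ is irreducible and $V$ has restricted highest weight $\lambda$. I would first combine Proposition~\ref{lin-la-gen} with Lemma~\ref{lem:5cases} to reduce to the case $e(\lambda)\leq 2$: these two results together cover every quasisimple finite classical group $H$ satisfying the $Q$-linear large hypothesis (the five exceptional groups omitted from Proposition~\ref{lin-la-gen} are handled by Lemma~\ref{lem:5cases}).

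Next I would dispose of the case $e(\lambda)=2$ by appealing directly to the three lemmas immediately preceding the theorem. The first of these shows that if $e(\lambda)=2$ and $V|_H$ is irreducible, then $V$ must be the heart of $\Lambda^2(W)$, $S^2(W)$, or $W\otimes W^*$, and the analysis (together with the $Q$-linear large hypothesis and the improved bounds $b_2$ of Proposition~\ref{b2-bound}) eliminates every potential configuration except the three families described in cases (c), (e) and the remark concerning ${\mathbf O}_7(2)$, $\Om_8^\pm(2)$. The second and third lemmas then strengthen \cite[Props.~5.7, 5.13]{MMT} to rule out cases (c) and (e), and the small-rank exceptions in (f), (g) are verified directly via \cite{GAP}. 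Consequently $e(\lambda)=2$ forces $V|_H$ to be reducible, so if $V|_H$ is irreducible we must have $e(\lambda)=1$.

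Finally I would observe that $e(\lambda)=1$ leaves essentially no freedom. By Definition~\ref{e(lambda)}, if $G$ is of type $B_\ell$, $C_\ell$ or $D_\ell$, then $e(\lambda)=\sum_{i=1}^\ell ia_i=1$ forces $\lambda=\lambda_1$ and hence $V\cong W$. If $G$ is of type $A_\ell$, then $e(\lambda)=\sum_{i=1}^\ell \min(i,\ell+1-i)a_i=1$ forces either $\lambda=\lambda_1$ (so $V\cong W$) or $\lambda=\lambda_\ell$ (so $V\cong W^*$). In all cases $V\in\{W,W^*\}$, completing the proof.

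The only mildly delicate point is the bookkeeping in the second paragraph: making sure that the list of exceptions arising in Proposition~\ref{lin-la-gen}, Lemma~\ref{lem:5cases}, and cases (a)--(g) of the first $e=2$ lemma have all been eliminated either by a $b_2$-style dimension comparison, by one of the two strengthened versions of the \cite{MMT} propositions, or by inspection of \cite{GAP}. Once this accounting is complete, the theorem follows with no further work, since the reduction to $e(\lambda)\leq 1$ combined with the classification of such weights is immediate.
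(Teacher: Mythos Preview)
Your approach is the same as the paper's and is correct in outline, but you have overlooked one subtle point that the paper's proof addresses explicitly. The theorem as stated does \emph{not} assume that $W$ is an irreducible $kH$-module (only that it is $Q$-linear large), whereas each of the three $e(\lambda)=2$ lemmas preceding the theorem does assume irreducibility of $W$. So you cannot appeal to them ``directly'' without first justifying that hypothesis.

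The paper fills this gap with a short bridging argument: since Proposition~\ref{lin-la-gen} and Lemma~\ref{lem:5cases} hold without the irreducibility assumption on $W$, one reduces to $e(\lambda)\leq 2$ as you say. If $e(\lambda)=2$, then $V$ is the heart of $\Lambda^2(W)$, $S^2(W)$, or $W\otimes W^*$; but if $W|_H$ were reducible, each of these would visibly be a reducible $kH$-module, contradicting the irreducibility of $V|_H$. Hence $W|_H$ is irreducible, and only then do the three preceding lemmas apply. Once you insert this observation, your argument matches the paper's, and your final identification of the weights with $e(\lambda)=1$ is correct and more explicit than the paper's own statement.
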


\begin{proof} Up to Lemma~\ref{lem:5cases}, we did
not require the $kH$-module $W$ to be irreducible. Moreover, 
Proposition~\ref{lin-la-gen} and Lemma~\ref{lem:5cases} show that if $W$ is a $Q$-linear large $kH$-module,
then $V|_H$ is reducible unless $e(\lambda)\leq 2$. If $e(\lambda) = 2$,
then $V$ is the heart of the symmetric or alternating square of the natural
$kG$-module $W$. But then it is clear that if $W$ is a reducible $kH$-module,
the heart of the alternating or symmetric square is again reducible
as a $kH$-module. Hence, the irreducibility of $V|_H$ implies that $W|_H$
is irreducible and then the result follows from the preceding two lemmas.
\end{proof}
\end{section}



\begin{section}{The nonrestricted case}
In this section we assume that $W$ is a $Q$-linear large $kH$-module, as usual, 
$G\subset \SL(W)$ is the smallest classical group containing the image
of $H$ under the corresponding representation, and 
$V$ is an irreducible $kG$-module with highest weight $\lambda$. Moreover,
we assume that $V$ is not a Frobenius twist of $W$ or $W^*$.
Having treated the case 
where $\lambda$ is restricted we now turn to the case where the highest weight 
$\lambda$ is not restricted.
This hypothesis, together with Steinberg's tensor product theorem, implies that 
$V$ decomposes as a twisted 
tensor product. Write 
$\lambda = \sum_{i=0}^t ({\rm char}(k))^i\mu_i$, where the 
$\mu_i$, for $0\leq i\leq t$, are 
restricted highest weights, and let $F$ denote the standard Frobenius 
morphism of $G \subset {\rm GL}(W)$. If exactly one of the $\mu_i$ is 
nonzero, then
$H$ acts irreducibly on $V$ if and only if $H$ 
 acts irreducibly on the irreducible $kG$-module with highest weight $\mu_i$.
Since there are no such configurations, by Theorem~\ref{thm_restricted},
 we assume that at least $2$ of the 
$\mu_i$ are nonzero.

\begin{lemma}\label{NR_e=1}
If $\lambda = \sum_{i=0}^t ({\rm char}(k))^i\mu_i$, and $e(\mu_i)> 1$ for 
some $i$, 
then $V|_H$ is reducible.
\end{lemma}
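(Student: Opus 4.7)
The plan is to combine Steinberg's tensor product decomposition with the bound-comparison methodology of the restricted case (Proposition~\ref{lin-la-gen}), applied factor-by-factor. Write $V \cong \bigotimes_{i \in I} V_G(\mu_i)^{F^i}$, where $I := \{i : \mu_i \neq 0\}$, and fix $i_0 \in I$ with $e(\mu_{i_0}) \geq 2$. If $|I| = 1$, then $V$ is a single Frobenius twist of the restricted module $V_G(\mu_{i_0})$; since $e(\mu_{i_0}) \geq 2$ forces $V_G(\mu_{i_0}) \notin \{W,W^*\}$, Theorem~\ref{thm_restricted} gives that $V_G(\mu_{i_0})|_H$ is reducible, and since Frobenius twisting acts on the finite group $H$ as an automorphism, reducibility is preserved, so $V|_H$ is reducible. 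I may therefore assume $|I| \geq 2$, so that $E := \sum_{i \in I} e(\mu_i) \geq 3$.

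Suppose for contradiction that $V|_H$ is irreducible. For each $i \in I$, the construction in the proof of Lemma~\ref{newupper} produces a $P_G$-invariant (hence $P_\chi$-invariant) subspace $U_i \subset V_G(\mu_i)$ of dimension at most $(\dim S)^{e(\mu_i)}$. The hypothesis $e(\mu_i) \leq (\ell+1)/2$ required here is verified as in Proposition~\ref{prop:e-bd}(a): were it to fail for some index $i$, Lemmas~\ref{lowerbd1}--\ref{lowerbd2} together with Lemma~\ref{lem:B3} would give $\dim V \geq \dim V_G(\mu_i) > B$, contradicting the Seitz upper bound on irreducible $kH$-modules. The hypothesis $\dim S > e(\mu_i)$ is handled analogously to Proposition~\ref{prop:e-bd}(b). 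Because the algebraic group parabolic $P_G = \Stab_G(S)$ is defined over the prime field, each $U_i$ is $F$-stable, so the tensor product $U := \bigotimes_{i \in I} U_i \subset V$ is $P_G$-invariant, hence $P_\chi$-invariant, of dimension at most $(\dim S)^E$. Frobenius reciprocity then yields
\[
\dim V \leq (\dim S)^E \cdot [H:P_\chi].
\]

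For the complementary lower bound, Proposition~\ref{easybounds} applied factorwise gives $\dim V = \prod_{i \in I} \dim V_G(\mu_i) \geq 2^E \prod_{i \in I} \binom{\ell}{e(\mu_i)}$ in types $B_\ell, C_\ell, D_\ell$, with the analogous product of $\binom{\ell+1}{e(\mu_i)}$ in type $A_\ell$. Setting $s := \dim S$ and $a := \lceil ([L:L_\chi]-1)/2 \rceil$ (respectively $[L:L_\chi]-1$ in type $A$), the bound $\ell \geq sa$ from Proposition~\ref{lin-la-gen} together with the identity $\binom{sa}{e} \geq s^e \binom{a}{e}$ cancels the $(\dim S)^E$ factors and reduces the combined inequality to $2^E \prod_{i \in I} \binom{a}{e(\mu_i)} \leq [H:P] \cdot [L:L_\chi]$. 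Since $|I| \geq 2$ and $E \geq 3$, the left-hand side dominates even the single-factor cubic $\tfrac{1}{6}([L:L_\chi]-3)([L:L_\chi]-6)$ shown in the Claim within Proposition~\ref{lin-la-gen} to exceed $[H:P]\cdot[L:L_\chi]$; the presence of a second nontrivial tensor factor only reinforces the inequality.

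The hard part will be the small-rank exceptional configurations, analogous to those treated in Lemma~\ref{lem:5cases} and in the restricted $e=2$ subcase (principally the groups in $\{{\mathbf O}_7(2), \Om_8^+(2), {\mathbf O}_9(2), \Lin_5(2), \Lin_6(2)\}$), where the generic polynomial comparison is tight. For these one must redo the verification case-by-case, using the sharper upper bounds of Table~\ref{tab:betterB} together with the character tables in \cite{GAP}; however, the second tensor factor supplies an extra multiplicative factor of at least $\dim W$ in the lower bound, so considerably more slack is available than in the restricted setting, making this case analysis strictly easier than the one already carried out for restricted modules.
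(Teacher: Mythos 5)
The lemma is essentially immediate from Theorem~\ref{thm_restricted}, and your proposal misses the one observation that makes it so: reducibility of a tensor product is inherited from any single factor. If $U$ is a proper nonzero $H$-submodule of one twisted factor $V(\mu_{i_0})^{F^{i_0}}|_H$, then $U$ tensored with all the remaining factors is a proper nonzero $H$-submodule of $V$ (the $H$-action is diagonal), so $V|_H$ is reducible. Since $e(\mu_{i_0})>1$ forces $V(\mu_{i_0})\not\cong W,W^*$, Theorem~\ref{thm_restricted} gives reducibility of that factor upon restriction (the Frobenius twist changes nothing, as $F^{i_0}(H)$ is again an irreducibly embedded copy of $H$ with $Q$-linear large natural module), and you are done --- with no case distinction on the number of nonzero $\mu_i$ and no dimension bounds. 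This is exactly the paper's three-line proof (``as the tensor product is distributive''). You use precisely this inheritance principle in your $|I|=1$ case, and then abandon it for $|I|\geq 2$, where it applies verbatim.

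Beyond being unnecessary, the $|I|\geq 2$ branch has genuine gaps. (i) The claim that each $U_i$ is $F$-stable ``because $P_G=\Stab_G(S)$ is defined over the prime field'' is unjustified: $S$ is an arbitrary irreducible $L_\chi^\infty$-constituent of $W_\chi$, and its stabilizer need not be $F$-stable; the paper must modify $F$ by an element of $P$ (Lemmas~\ref{lem:equiv} and \ref{lem:F-inv}) to obtain such compatibility, and what your Frobenius-reciprocity bound actually requires is $P_\chi$-invariance of the \emph{twisted} product $\bigotimes_i U_i^{F^i}$, i.e.\ $F^i(P_\chi)$-invariance of each $U_i$, which you never establish. (ii) Lemma~\ref{newupper} needs $\dim S>e(\mu_i)$ for each factor, but your appeal to the method of Proposition~\ref{prop:e-bd}(b) only yields ``$e(\mu_i)\leq 2$ or $e(\mu_i)<\dim S$'', which does not cover $e(\mu_i)=2=\dim S$; the paper's restricted analysis had to treat $e=2$ by entirely different means (\cite{MMT}), so this is not a cosmetic issue. (iii) The exceptional small-rank groups are merely asserted to be ``strictly easier'' rather than checked. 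None of this machinery is needed once the inheritance observation is in place.
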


\begin{proof} Our hypothesis on $\lambda$ implies that 
$V = V(\mu_0)\otimes V(\mu_1)^F \otimes \dots \otimes V(\mu_t)^{F^t}$, where 
$V(\mu_i)$ is a restricted 
highest weight module for $G$. If $e(\mu_i)> 1$ then Theorem~\ref{thm_restricted} 
implies that $V(\mu_i)|_H$ is reducible. As the tensor product is distributive 
our claim follows.\end{proof}

So without loss of generality, we now assume that each nontrivial tensor 
factor of $V$ is a 
Frobenius twist of $W$ or $W^\ast$.

\begin{prop}\label{NR_linear}
Assume $H$ is of type $\Lin$, 
and $V$ is an irreducible $kG$-module with highest weight 
$\lambda = \sum_{i=0}^t ({\rm char}(k))^i\mu_i$. If $\mu_i\ne0\ne\mu_j$ for
some $0\leq i\ne j\leq t$, then $V|_H$ is reducible.
\end{prop}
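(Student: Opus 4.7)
The plan is to argue by contradiction: assume $V|_H$ is irreducible and derive a numerical contradiction. Combining Lemma~\ref{NR_e=1} with Theorem~\ref{thm_restricted}, every nontrivial Steinberg factor of $V$ must be isomorphic to $W$ or $W^*$, so we may write
$$V \;=\; U_1 \otimes U_2 \otimes \cdots \otimes U_s,$$
with $s \geq 2$ (by the hypothesis of the proposition) and each $U_j \in \{W^{F^{a_j}},\,(W^*)^{F^{a_j}}\}$ for pairwise distinct twist exponents $a_j$.

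Next, I would repeat the upper/lower bound analysis of the restricted case, with the number $s$ of twist factors playing the role of $e(\lambda)$. Each $U_j$ remains a $Q$-linear large $kH$-module (the property is preserved under duality and Frobenius twisting), so for each $j$ we can choose a nontrivial $\chi_j \in Q^*$ and an irreducible $L_{\chi_j}^\infty$-summand $S_j \subseteq \mathrm{soc}((U_j)_{\chi_j})$ of dimension $>1$. Since $H$ is of type $\Lin$, the group $L$ acts transitively on $Q^*\setminus\{1\}$ with orbit length $q^r-1$ (Table~\ref{tab:HP}), so the $\chi_j$ can be chosen so that $\chi := \prod_j \chi_j \neq 1$. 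The factor-wise $Q$-linear large bound gives $\dim U_j \geq (q^r-1)\dim S_j$, whence
$$\dim V \;=\; \prod_j \dim U_j \;\geq\; (q^r-1)^{s}\prod_j \dim S_j.$$
On the upper-bound side, adapting the construction of Lemma~\ref{newupper} factor-wise (taking $C_{U_j}(Q_G)$ for the unipotent radical of a parabolic of $G$ stabilizing an appropriate singular subspace of $W$, with the dual factors handled by passing to opposite parabolics) and then applying Frobenius reciprocity produces a bound of shape
$$\dim V \;\leq\; [H:P_\chi]\prod_j \dim S_j \;=\; [H:P]\,(q^r-1)\prod_j \dim S_j.$$
Combining gives $(q^r-1)^{s-1} \leq [H:P] = (q^{r+1}-1)/(q-1)$, an inequality that, since Lemma~\ref{lem:basecases} forces $r\geq 3$, is violated for all $s\geq 3$.

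This reduces us to the case $s=2$, where $V = U_1 \otimes U_2$ is a tensor product of two Frobenius twists of $W$ or $W^*$. Here I would appeal to the main theorem of \cite{MaTi}, which classifies all irreducible tensor products of pairs of cross-characteristic representations of finite classical groups; the exceptional pairs appearing there for $H$ of type $\Lin$ arise only in small-rank cases or from Weil-type modules, none of which satisfy the $Q$-linear large hypothesis. Thus $V|_H$ must be reducible, contradicting our assumption. The main technical obstacle is the careful derivation of the upper bound on $\dim V$: the naive tensor $\bigotimes_j V_{j,0}$ of factor-wise parabolic-invariant subspaces is only invariant under $\bigcap_j P_{\chi_j}$, which may be strictly smaller than $P_\chi$, so one must either pass to an $L_\chi$-generated submodule (absorbing a bounded combinatorial correction into the bound) or work inside a single parabolic of $G$ whose unipotent-radical centralizer factors nicely across the tensor decomposition; in either case, a careful verification is needed to ensure the resulting dimension growth is compensated by the $(q^r-1)^{s-1}$ factor.
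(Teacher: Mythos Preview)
Your route is more elaborate than necessary and contains a gap you yourself flag. The paper's proof is a direct appeal to \cite{MaTi}, split by the value of $q$: for $q\geq 4$ one invokes \cite[3.2]{MaTi}; for $q=3$ one invokes \cite[3.4]{MaTi}, which says a tensor product of two $kH$-modules is reducible unless one factor is a Weil module (and the $Q$-linear large hypothesis rules that out); for $q=2$ (where Lemma~\ref{lem:basecases} forces $r\geq 4$) one invokes \cite[3.3]{MaTi}, which says the tensor product of any two nontrivial $kH$-modules is reducible. The elementary observation you are missing is that reducibility of the tensor product of \emph{any two} of the Steinberg factors already forces reducibility of the full product $U_1\otimes\cdots\otimes U_s$; so the pairwise results from \cite{MaTi} dispose of all $s\geq 2$ at once, and your numerical reduction to $s\leq 2$ is superfluous.

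As for the gap: your upper bound does not go through as written. Choosing a separate $\chi_j$ for each factor yields a subspace invariant only under $\bigcap_j P_{\chi_j}$, and the claimed bound $[H:P_\chi]\prod_j\dim S_j$ does not follow from that. The paper does develop machinery for exactly this issue, but only for the non-linear types (Lemmas~\ref{lem:equiv} and \ref{lem:F-inv}, feeding into Lemma~\ref{NR_atmost2}): one fixes a \emph{single} $\chi$ and uses the Frobenius compatibility of the $L$-orbit decomposition on $Q^*$ to produce, for each twist $W^{F^i}$, a $P_\chi$-invariant submodule $S^{F^i}\subset (W^{F^i})_\chi$ of dimension $\dim S$. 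That is how one obtains a genuine $P_\chi$-submodule of $V$, not merely one for an intersection of stabilizers. If you wanted to salvage your numerical argument for type $\Lin$, this single-$\chi$ approach is the correct fix; but as the paper shows, for type $\Lin$ the direct appeal to \cite{MaTi} makes the whole detour unnecessary.
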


\begin{proof} Suppose the contrary. Then
Lemma~\ref{NR_e=1} implies that $V$ is a tensor product of $e$ 
Frobenius twists of restricted $kG$-modules, each of which is isomorphic
to either $W$ or $W^\ast$, where $2 \leq e \leq t+1$. 
If $H = \Lin_{r+1}(q)$ with $q \geq 4$ and $r \geq 1$, then 
\cite[3.2]{MaTi} implies that $V|_H$ is reducible. 
If $H= \Lin_{r+1}(3)$, with $r\geq 2$, then
\cite[3.4]{MaTi} implies that the tensor product of two  
$kH$-modules is reducible unless one 
of the factors is a Weil module. 
It follows from the basic properties of Weil modules (see \cite{GT_low}) that if $W$ is $Q$-linear large then
 none of $W$, $W^*$, or any Frobenius twists of these modules
are Weil modules. 

Hence, by Lemma~\ref{lem:basecases} and the above considerations, we are 
reduced to the case where $H = \Lin_{r+1}(2)$ 
with $r \geq 4$. 
We now refer to \cite[3.3]{MaTi} to see that 
 the tensor product of any two non-trivial $kH$-modules is reducible,
thus establishing the proposition.\end{proof} 

Let $P< H$ be as in Sections~\ref{sec:intro} and \ref{sec:prelim},
the stabilizer of a singular $1$-space of the natural module for $H$.
Write as usual $P = QL$, where $Q=O_p(P)$. Using Table~\ref{tab:HP},
we see that
 $L$ has up to three orbits ${\mathcal O}_i$, $1 \leq i  \leq 3$ 
on the non-trivial linear characters of $Q$. Define 
$W_{{\mathcal O}_i} := \oplus_{\lambda \in {\mathcal O}_i} W_{\lambda}$
and note that $W_{{\mathcal O}_i}$ is a $P$-module. For a $k$-subspace $U$ 
of $W$,
we write $U^F$ for the image of $U$ under the semilinear map induced by 
$F$.

\begin{lemma}\label{lem:equiv} 
With the notation as above we have that  $W_{{\mathcal O}_i}$ and 
$(W_{{\mathcal O}_i})^F$ are equivalent $kQ$-modules
for each $i$. 
\end{lemma}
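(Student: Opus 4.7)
Set $\ell := \mathrm{char}(k)$; note that if $\ell = 0$ the statement is vacuous, so we assume $\ell > 0$. Because $\ell \neq p$, $|Q|$ is coprime to $\ell$, hence $kQ$ is semisimple by Maschke's theorem and $kQ$-modules are determined up to isomorphism by their characters. The plan is to compute the $Q$-character of each of $W_{\mathcal{O}_i}$ and $(W_{\mathcal{O}_i})^F$ and verify that the two coincide.

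Since $\mathcal{O}_i$ is an $L$-orbit and $L$ normalizes $Q$ while permuting the weight spaces $\{W_\chi\}_{\chi\in\mathcal{O}_i}$, all of these spaces share a common dimension, which we denote $d_i$. Hence the $Q$-character of $W_{\mathcal{O}_i}$ is $u\mapsto d_i\sum_{\chi\in\mathcal{O}_i}\chi(u)$. By definition, $F$ is the entry-wise $\ell$-th power Frobenius on matrices of $\GL(W)$ in a fixed basis of $W$, so the identity $F(gw)=F(g)F(w)$ together with the freshman's dream $(a+b)^\ell=a^\ell+b^\ell$ in characteristic $\ell$ yields $\mathrm{tr}(F(g))=\mathrm{tr}(g)^\ell$ for every $g\in\GL(W)$. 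Accordingly, the $Q$-character of $(W_{\mathcal{O}_i})^F$ is $u\mapsto d_i\sum_{\chi\in\mathcal{O}_i}\chi(u)^\ell=d_i\sum_{\chi\in\mathcal{O}_i}\chi^\ell(u)$.

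It therefore suffices to show that the map $\sigma\colon Q^*\to Q^*$, $\chi\mapsto\chi^\ell$, preserves the $L$-orbit $\mathcal{O}_i$; granting this and reindexing via $\sigma$, the two characters agree. Since $Q/[Q,Q]$ is elementary abelian of exponent $p$, one identifies $Q^*$ with an $\mathbb{F}_p$-vector space, and $\sigma$ is then scalar multiplication by $\ell\bmod p\in\mathbb{F}_p^\times$. An inspection of the Levi structures in Table~\ref{tab:HP} shows that $L$ always contains a subgroup of scalar matrices $\mathbb{F}_q^\times$ (or $\mathbb{F}_{q^2}^\times$ when $H$ is unitary) whose conjugation action on $Q/[Q,Q]$ is the natural scalar action; since $\ell\bmod p\in\mathbb{F}_p^\times\subseteq\mathbb{F}_q^\times$, an appropriate element $t\in L$ realizes $\sigma$ via its action on $Q^*$, and hence $\sigma$ preserves every $L$-orbit on $Q^*$.

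The main obstacle lies in this last identification of $\sigma$ with an $L$-action. While not intrinsically hard, it requires checking, case by case, that the Levi factor of the relevant maximal parabolic in each classical family indeed contains the claimed subgroup of scalars acting on $Q/[Q,Q]$ in the standard way. Once this is done, the character equality is immediate and the $kQ$-module equivalence follows from Maschke's theorem.
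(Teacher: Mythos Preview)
Your reduction to showing that $\sigma\colon\chi\mapsto\chi^\ell$ fixes each $L$-orbit $\mathcal O_i$ setwise is correct, and the character computations leading up to it are fine. The gap lies in how you justify this last step. You assert that $L$ always contains a copy of $\mathbb F_q^\times$ acting by scalars on $Q/[Q,Q]$, so that multiplication by $\ell\bmod p\in\mathbb F_p^\times$ is realised by an element of $L$. This fails for orthogonal $H$: when $H$ is a central extension of $\Omega_{2r+1}(q)$ with $q$ odd, the torus element acting as $t$ on a singular basis vector and $t^{-1}$ on its hyperbolic partner has spinor norm $t$ modulo squares, and hence lies in $\Omega$ only when $t\in(\mathbb F_q^\times)^2$. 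Thus the scalars available inside $L$ form only the subgroup of squares, and if $\ell$ happens to be a nonsquare modulo $p$ your argument does not go through as stated. Analogous subtleties arise for $\Omega_{2r}^\pm(q)$.

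The paper's argument is different and sidesteps this entirely. Rather than trying to realise $\sigma$ as the action of an element of $L$, one notes that $\sigma$ commutes with the $L$-action on $Q^*$ (conjugation by $L$ acts through group automorphisms of $Q/[Q,Q]$, and $\ell$-th powering commutes with every group automorphism). Hence $\sigma$ permutes the $L$-orbits among themselves, preserving their sizes. One then reads off from Table~\ref{tab:HP} that the nontrivial $L$-orbits on $Q^*$ have pairwise distinct cardinalities, so this permutation must be the identity and $\sigma(\mathcal O_i)=\mathcal O_i$. This is uniform across all classical types and requires no analysis of which scalars lie in $L$. Your approach can be repaired---for instance by checking directly that scalar multiplication on $Q^*$ preserves the geometric type (singular versus nonsingular of each kind) of a vector and hence each $L$-orbit---but the paper's route is shorter and cleaner.
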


\begin{proof}  The $kQ$-module
$W_{{\mathcal O}_i}$ is a direct sum of $|{\mathcal O}_i|$ isotypic components. 
As $F$ is semilinear we see that 
$W_{{\mathcal O}_i}^F$ is also a direct sum of $|{\mathcal O}_i|$ isotypic 
components. Now, using Table~\ref{tab:HP}, we observe that 
the $P$-orbits on ${\rm Hom}(Q/[Q,Q],k^*)$ have pairwise distinct lengths, 
and thus 
$ W_{{\mathcal O}_i}$ and $ (W_{{\mathcal O}_i})^F$ are equivalent $kQ$-modules. 
\end{proof}

The proof of the lemma shows that for $\chi\in{\rm Hom}(Q/[Q,Q],k^*)$, with
$W_\chi \neq 0$, $F$ can be modified (multiplied) by an 
element of $P$ to guarantee that $W_\chi$ is $F$-invariant. Henceforth we
assume this is the case. (Of course the
 element of $P$ needed to 
guarantee this is not independent of our choice of $\chi$.)

\begin{lemma}\label{lem:F-inv}
If $\chi$ is a linear character of $Q$ and 
$S \subset W_\chi$ is an $L_\chi$-invariant $k$-subspace, 
then we can choose $F$ such that $W_\chi = (W^F)_\chi$ and 
$S^F$ is an $F(L_\chi)$-invariant $k$-subspace. 
\end{lemma}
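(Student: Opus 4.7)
The plan is to combine the preceding Lemma~\ref{lem:equiv} (and the remark right after it) with the basic semilinear compatibility of $F$ with the $G$-action on $W$. The first condition $W_\chi=(W^F)_\chi$ is exactly what the remark preceding this lemma already establishes: after composing $F$ with a suitable element of $L\subseteq P$, we may assume the $\chi$-weight space $W_\chi$ is preserved by $F$, which in the notation $(W^F)_\chi=F(W_\chi)$ is the desired equality. So I would simply invoke that construction to produce the required modification of $F$.

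Once that modification is in place, the second condition is essentially automatic. The modified $F$ is still a Frobenius morphism (up to an inner twist by the element of $P$ used), so for every $g\in G$ and $w\in W$ we continue to have the standard semilinear identity $F(g\cdot w)=F(g)\cdot F(w)$. In particular, for any $m\in L_\chi$ and $s\in S$, the $L_\chi$-invariance of $S$ gives $m\cdot s\in S$, and hence
\[
F(m)\cdot F(s)=F(m\cdot s)\in F(S)=S^F.
\]
This shows that $F(L_\chi)\cdot S^F\subseteq S^F$, which is exactly the claimed $F(L_\chi)$-invariance of $S^F$.

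The only subtlety is verifying that replacing $F$ by $p\circ F$ for some $p\in P$ preserves the semilinear-compatibility identity used in the display above (with the induced action of the modified Frobenius on $G$ being $g\mapsto p\,F(g)\,p^{-1}$). This is a routine check: inner twists of a Frobenius endomorphism remain compatible with the module structure. Thus the lemma reduces to Lemma~\ref{lem:equiv} together with a one-line application of the Frobenius-equivariance of the $G$-action on $W$, and no new analysis is required.
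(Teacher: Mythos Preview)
Your proposal is correct and rests on the same underlying idea as the paper's proof: the semilinear compatibility $F(g\cdot w)=F(g)\cdot F(w)$, together with the modification of $F$ by an element of $P$ coming from Lemma~\ref{lem:equiv}. The paper phrases the second assertion slightly differently, via the preservation of centralizers ($F(C_{\GL(W_\chi)}(L_\chi)) = C_{\GL(W_\chi)}(F(L_\chi))$) and hence of endomorphism algebras, but your direct one-line verification that $F(m)\cdot F(s)=F(m\cdot s)\in S^F$ is an equally valid (and arguably cleaner) route to the stated conclusion.
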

\begin{proof} We choose $F$ as discussed above so that $W_\chi$ is 
$F$-invariant. The second assertion follows from the fact that
semilinear maps preserve isomorphisms and centralizers; 
that is, $F(C_{\GL(W_\chi)}(L_\chi)) = C_{\GL(W_\chi)}(F(L_\chi))$ and hence 
the endomorphism algebras of $W_\chi|_{L_\chi}$ and of $W_\chi|_{F(L_\chi)}$
 are mapped 
isomorphically one onto the other and in particular the fields of definition 
are preserved.
\end{proof}

\begin{lemma}\label{NR_atmost2}
Assume $W$ is an irreducible $Q$-linear large module for $H$. 
If  $H$ is not of type $\Lin$, $\lambda = \sum_{i=0}^t ({\rm char}(k))^i\mu_i$, 
and $V|_H$ is irreducible, then the 
number of non-trivial tensor factors is at most two, and moreover, 
the $kH$-modules $W$ 
and $W^F$ are inequivalent.
\end{lemma}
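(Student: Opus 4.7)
My approach is to combine Lemma~\ref{NR_e=1} and Theorem~\ref{thm_restricted} to force every non-trivial tensor factor of $V$ to be a Frobenius twist of $W$ or $W^*$, and then to derive a contradiction from the assumption $s\geq 3$ via an upper/lower bound comparison on $\dim V$ analogous to Lemma~\ref{newupper}.

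First I would note that $e(\mu_i)\leq 1$ for each $i$ by Lemma~\ref{NR_e=1}, and that the irreducibility of $V|_H$ combined with Theorem~\ref{thm_restricted} forces each non-trivial $V(\mu_i)$ to be $W$ or $W^*$. Hence $V = X_1^{F^{i_1}}\otimes\cdots\otimes X_s^{F^{i_s}}$ with each $X_j\in\{W,W^*\}$, which gives the lower bound $\dim V = (\dim W)^s$.

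Next, the $Q$-linear large hypothesis supplies $\chi\in Q^*$, $\chi\neq 1$, and an irreducible $L_\chi^\infty$-submodule $S\subset W_\chi$ of dimension at least $2$. For each factor $X_j^{F^{i_j}}$ I would apply Lemma~\ref{lem:F-inv} (using $\chi^{-1}$ in place of $\chi$ on the $W^*$-factors) to arrange simultaneously that $S$, respectively $S^*$, lies in the appropriate $Q$-weight space of the twisted module and is $L_\chi$-invariant in the twisted action. The external tensor product $S^{\otimes s}\subset V$ is then $Q$-stable, acting by a single linear character, and $L_\chi$-stable, hence $P_\chi$-invariant, of dimension $(\dim S)^s$. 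Frobenius reciprocity applied to the irreducible $H$-module $V|_H$ then gives $\dim V\leq (\dim S)^s[H:P_\chi]$. Combining with $\dim W\geq (\dim S)[L:L_\chi]$ produces
\[
[L:L_\chi]^{s-1}\leq [H:P],
\]
which, by inspection of Table~\ref{tab:HP} over the list of groups allowed by Lemma~\ref{lem:basecases}, is violated for every $H$ under consideration as soon as $s\geq 3$. Hence $s\leq 2$.

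Finally, assume $s=2$ and suppose toward a contradiction that $W\cong W^F$ as $kH$-modules. Then every Frobenius twist of $W$ (resp.~of $W^*$) is $H$-isomorphic to $W$ (resp.~to $W^*$), so $V|_H$ is isomorphic as an $H$-module to one of $W\otimes W$, $W\otimes W^*$, or $W^*\otimes W^*$. In the first and third cases the flip $v\otimes w\mapsto w\otimes v$ is a non-scalar $H$-equivariant endomorphism, forcing $\dim\mathrm{End}_H(V|_H)\geq 2$. In the middle case the evaluation pairing $W\otimes W^*\to k$ is a non-zero $H$-homomorphism whose kernel is a proper non-zero $H$-submodule, since $\dim V|_H > 1$. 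In either case $V|_H$ is reducible, contradicting the hypothesis, so $W\not\cong W^F$. The main obstacle I anticipate is the simultaneous construction of the $P_\chi$-invariant subspace $S^{\otimes s}$ when the tensor factors have different Frobenius exponents and mix $W$-twists with $W^*$-twists; this requires a coherent application of Lemma~\ref{lem:F-inv} underpinned by the distinctness of the $P$-orbit lengths on $Q^*$ recorded in Table~\ref{tab:HP} and exploited in Lemma~\ref{lem:equiv}. The subsequent polynomial comparison is routine but will require care for the small-rank groups listed in Lemma~\ref{lem:basecases}.
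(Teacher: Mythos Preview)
Your approach is essentially the same as the paper's: both reduce to twisted tensor products of $W$ and $W^*$, construct a $P_\chi$-invariant subspace of dimension $(\dim S)^s$ in $V$ via Lemma~\ref{lem:F-inv}, and compare the resulting upper bound $(\dim S)^s[H:P_\chi]$ against the lower bound $(\dim W)^s\geq (\dim S)^s[L:L_\chi]^s$ to obtain $[L:L_\chi]^{s-1}\leq [H:P]$. The argument for the inequivalence of $W$ and $W^F$ is also the same in spirit.

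There is one concrete gap. Your assertion that $[L:L_\chi]^{2}>[H:P]$ holds ``for every $H$ under consideration'' is false for $H=\mathbf{O}_7(2)$: the only $\chi$ with nonsolvable $L_\chi$ has $O^p(L_\chi^\infty)\cong\Omega_4^-(2)$ and $[L:L_\chi]=6$, while $[H:P]=63$, so $36<63$ and no contradiction results for $s=3$. The paper is aware of this and, rather than relying solely on the polynomial inequality, recycles the Claim from Proposition~\ref{lin-la-gen} (which already excluded $\mathbf{O}_7(2)$, $\Omega_8^+(2)$, $\mathbf{O}_9(2)$) and then disposes of these three groups by a direct comparison of $(\dim W)^s$ with the maximal irreducible character degree of $H$; for instance, for $\mathbf{O}_7(2)$ one uses $\dim W\geq 14$ and $\dim V\leq 720$, so $(\dim W)^3\geq 2744>720$. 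Your hedge about ``care for the small-rank groups'' is on the right track, but you need to carry out at least this one ad hoc check to close the argument.
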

 
\begin{proof} As in the proof of Proposition~\ref{NR_linear},
 we may assume that 
$V$ is a tensor product of $e$ Frobenius twists of restricted irreducible
$kG$-modules each of which is isomorphic to one of
$W$ and $W^\ast$, and where $2 \leq e \leq t+1$. 
Now $F$ acts semilinearly on $W$
and is a bijective morphism of the group $G$. Hence, 
$F^i(H)$ is a subgroup of $G$ acting irreducibly on 
$W^{F^i}$
 and on $(W^{\ast})^{F^i}$, for all $i\geq 0$. 
If $W$ and $W^F$ are equivalent $kH$-modules,
 then any tensor product involving Frobenius 
twists of $W$ and $W^{\ast}$
must necessarily be reducible; hence 
$W$ and $W^F$ are inequivalent $kH$-modules.

We now argue as in the proofs of Lemma~\ref{newupper} and 
Proposition \ref{lin-la-gen} to produce a subspace of 
$V$ which is invariant under $P_\chi = QL_\chi$ for some linear character 
$\chi \in Q^*$ such that $W_\chi \neq 0$.
As usual we let $S \subset W_\chi$ be an irreducible $P_\chi$-submodule. 
Then the $P_\chi$-module $W^F$ has a 
submodule $S^F$ of dimension equal to $\dim S$. By Lemma~\ref{lem:F-inv},
all 
tensor factors of $V$ possess
a $P_\chi$-invariant submodule of dimension equal to $\dim S$. This 
yields an upper bound for ${\rm dim}(V)$ namely 
$$ \dim V \leq (\dim S)^e[H:P_\chi] = (\dim S)^e[H:P][L:L_\chi].$$ 
 
Clearly, $\dim W \geq (\dim S)[P:P_\chi] = (\dim S)[L:L_\chi]$, 
which yields a lower bound for 
$\dim V$, and we therefore have the following two inequalities:
$$ (\dim S)^e[L:L_\chi]^e \leq \dim V 
\leq (\dim S)^e[H:P][L:L_\chi].$$

In the proof of Proposition~\ref{lin-la-gen} we checked that 
$${[L:L_\chi]-1\choose 3} > [H:P_\chi]$$ for all $H \neq  
{\mathbf O}_7(2), \Om_8^+(2),{\mathbf O}_9(2)$. As 
${[L:L_\chi]-1\choose 3} < [L:L_\chi]^3$, we have that $e \leq 2$ for all 
$H \neq {\mathbf O}_7(2), \Om_8^+(2),{\mathbf O}_9(2)$.

If $H = {\mathbf O}_7(2)$, then arguing as in the proof of Lemma~\ref{lem:5cases},
we see that $\dim V\leq 720$. Since $\sqrt[3]{720}<9$ and any $Q$-linear large module
for $H$ has dimension at least 14, $e$ can be at most 2. For $H = \Om_8^+(2)$, we have $\dim V\leq 6075$, 
while $\dim W\geq 104$, and again we have $e\leq 2$. Finally, if $H={\mathbf O}_9(2)$, then we check that 
$\dim W\geq 28 \cdot 4 = 112$ and thus 
$(\dim W)^3$ exceeds the degree of 
any irreducible $kH$-representation, completing the proof that $e \leq 2$.\end{proof}

\begin{prop} If $H$ is not of type $\Lin$ and $V = W \otimes W^{F^i}$
or  $V = W \otimes (W^\ast)^{F^i}$, then $V|_H$ is reducible unless:
\begin{enumerate}
\item $q \leq 3$, or
\item $q$ is even and $H={\mathbf O}_{2r+1}(q)$.
\end{enumerate}
\end{prop}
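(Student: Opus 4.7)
The plan is to reduce the statement to a direct invocation of the main tensor-product irreducibility theorem of \cite{MaTi}. By Lemma~\ref{NR_atmost2} we already know that $V$ is a two-factor tensor product, each factor being (a Frobenius twist of) $W$ or $W^*$, and moreover the two factors are inequivalent as $kH$-modules. So we only need to rule out the irreducibility of such a two-factor tensor product.

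First I would apply the classification in \cite{MaTi} of irreducible tensor products of pairs of nontrivial inequivalent cross-characteristic irreducible representations of a quasisimple finite classical group $H$ not of type $\Lin$, with $q \geq 4$. That theorem shows that for such $H$ and $q$, a tensor product $V_1 \otimes V_2$ is reducible unless at least one factor is a Weil-type module (for $H$ of type $\Sp$ or $\U$), modulo a short list of genuine low-rank exceptions already excluded by the hypotheses of Lemma~\ref{lem:basecases}.

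The key structural input is the $Q$-linear large hypothesis on $W$. As noted in the discussion following Definition~\ref{linlarge}, the Weil modules for the symplectic and unitary groups are $Q$-special, hence not $Q$-linear large; and a Frobenius twist preserves the $L_\chi^\infty$-structure of $W_\chi$ (modifying $F$ by an element of $P$ as in Lemmas~\ref{lem:equiv} and \ref{lem:F-inv} if necessary), so neither $W^{F^i}$ nor $(W^*)^{F^i}$ can be a Weil module either. This eliminates the only remaining source of irreducible tensor products flagged by \cite{MaTi}, forcing reducibility of $V|_H$ in the generic case.

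The two listed exceptions are unavoidable: when $q \leq 3$, the $\Sp_{2r}(3)$-Weil-module examples and related families in \cite{MaTi, MMT} prevent any uniform conclusion, and when $q$ is even with $H = {\mathbf O}_{2r+1}(q)$ the exceptional isomorphism $\Sp_{2r}(q) \cong {\mathbf O}_{2r+1}(q)'$ means that the orthogonal viewpoint does not rule out the symplectic Weil-type configurations. The main obstacle will be checking that \cite{MaTi}'s statement, as formulated for bare tensor products $V_1 \otimes V_2$, applies uniformly to Frobenius-twisted tensor products $W \otimes W^{F^i}$ and $W \otimes (W^*)^{F^i}$, and that the Weil-versus-non-Weil dichotomy is correctly preserved under $F$ and under dualization.
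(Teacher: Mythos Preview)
Your approach is the same as the paper's: invoke the main theorem of \cite{MaTi}, then eliminate the residual cases using Lemma~\ref{lem:basecases} and the fact that $Q$-linear large modules are never Weil modules. However, your description of what \cite{MaTi} actually says is inaccurate, and this matters for the logic.

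Theorem~1.1 of \cite{MaTi} does not assert that a tensor product of two nontrivial irreducible $kH$-modules is reducible ``unless at least one factor is a Weil-type module.'' Rather, it gives a specific finite list of exceptional configurations: $q\leq 3$; $H={\mathbf O}_{2r+1}(q)$ with $q$ even; $H=\Sp_{2r}(5)$; and $H=\Lin_3(4)$. The Weil-module argument enters only to dispose of the $\Sp_{2r}(5)$ case, via a separate result (Proposition~5.2(ii) of \cite{MaTi}), which says that in that particular family any irreducible tensor product must have a Weil factor. The $\Lin_3(4)$ case is excluded by Lemma~\ref{lem:basecases}. The remaining two items are exactly the exceptions in the statement; they are not handled by any Weil-module reasoning. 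In particular, your explanation of why ${\mathbf O}_{2r+1}(q)$ with $q$ even survives---that ``the orthogonal viewpoint does not rule out the symplectic Weil-type configurations''---is not correct: there are no Weil modules for $\Sp_{2r}(q)$ in even characteristic, and this case is simply left open by \cite{MaTi} (and is dealt with separately in Proposition~\ref{prop:final}).

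Your concern about whether the \cite{MaTi} statement applies to Frobenius-twisted factors is not an obstacle: $W^{F^i}|_H$ and $(W^*)^{F^i}|_H$ are Galois conjugates of $W|_H$ and $W^*|_H$, hence irreducible $kH$-modules, and \cite{MaTi} applies to arbitrary pairs of irreducible $kH$-modules. Likewise, since Galois conjugation preserves dimension, a Galois twist of a non-Weil module is again non-Weil, so no extra work is needed there.
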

	
\begin{proof} The content of Theorem 1.1 of \cite{MaTi} is that 
$H = X_r(q)$ acts reducibly on tensor 
products of irreducible cross-characteristic $H$-modules unless 
one of the following holds:
\begin{itemize}
\item $q \leq 3$;
\item $H = \O_{2r+1}(q)$ with $q$-even;
 \item $H = \Sp_{2r}(5)$; or
\item $H=\Lin_3(4)$.
\end{itemize}
The last case is ruled out by Lemma~\ref{lem:basecases}.
 The case $H= \Sp_{2r}(5)$ can be 
eliminated using Proposition 5.2 (ii) of \cite{MaTi}, as our hypothesis that $W$ is $Q$-linear large
implies that $W$ is not a Weil module.\end{proof}

To complete the proof of Theorem 1, we must handle case 2. of the previous 
result in case $q\geq 4$. This is covered by the following:

\begin{prop}\label{prop:final} If $q \geq 4$ is even, $H={\mathbf O}_{2r+1}(q)$ with 
$r\geq 3$, 
$W$ is an absolutely irreducible $kH$-module, and 
$V = W \otimes W^{F^i}$
or  $V = W \otimes (W^\ast)^{F^i}$, then $V|_H$ is reducible.
\end{prop}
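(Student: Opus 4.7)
The plan is to exploit the exceptional isomorphism ${\mathbf O}_{2r+1}(q)' \cong \Sp_{2r}(q)$, valid for $q$ even and recorded in Remark 2.1(1) of Section~\ref{sec:notation}, in order to view $H$ as a symplectic group acting on $W$. This places us in the setting where the preceding proposition's treatment of $H = \Sp_{2r}(5)$ can be adapted: namely, invoking \cite[Proposition~5.2(ii)]{MaTi}, which establishes reducibility of tensor products of non-Weil cross-characteristic irreducible modules for symplectic groups. Note that we have already reduced to the $e=2$ situation by Lemma~\ref{NR_atmost2}, and that Lemma~\ref{NR_atmost2} also gives us that $W$ and $W^F$ are inequivalent $kH$-modules, so the tensor product $V$ is genuinely of the form required by that result in \cite{MaTi}.

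The essential verification is that $W$, viewed as an irreducible module for the symplectic realization of $H$, is not a Weil module. To establish this, I would combine the $Q$-linear large hypothesis with Proposition~\ref{b2-bound}, which provides the lower bound $\dim W \geq b_2$ from Table~\ref{tab:b2}. For $r \geq 4$ and $q \geq 4$ even, $b_2$ grows at least on the order of $q^{4r-8}$, whereas the Weil modules of $\Sp_{2r}(q)$ in cross characteristic have dimension bounded by polynomials of the order of $q^r$ (see \cite{GT_low,GMST}). Thus $\dim W$ easily exceeds the possible Weil module dimensions, and $W$ is non-Weil. Applying \cite[Proposition~5.2(ii)]{MaTi} in the same manner as for $\Sp_{2r}(5)$ then yields the reducibility of $V = W \otimes W^{F^i}$, respectively $V = W \otimes (W^*)^{F^i}$, as an $H$-module.

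The main obstacle I anticipate is a careful reconciliation of the parabolic data across the exceptional isomorphism. The orthogonal parabolic $P = QL$ used in Definition~\ref{linlarge} to define $Q$-linear large differs a priori from the symplectic parabolic underlying the Weil classification, since in even characteristic the orthogonal $Q$ from Table~\ref{tab:HP} is abelian while the symplectic $Q$ is extraspecial-like. One must confirm that the quantitative lower bound on $\dim W$ coming from the orthogonal $Q$-linear large hypothesis does translate correctly to rule out Weil modules on the symplectic side, perhaps by verifying directly that the known Weil modules of $\Sp_{2r}(q)$ for $q$ even fail to satisfy the orthogonal $Q$-linear large condition.

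In the borderline case $r = 3$, that is $H = {\mathbf O}_7(q)$ with $q \in \{4, 8, 16, \ldots\}$, the general inequality comparing $b_2$ to the Weil module dimensions becomes numerically tight (both sides are of order $q^5$--$q^6$), and the general argument may need to be supplemented by direct verification using the \cite{GAP} character tables, in the spirit of the small-rank exceptional analysis carried out in Lemma~\ref{lem:5cases}. The case $H = {\mathbf O}_7(4) \cong \Sp_6(4)$ in particular was already examined by dimensional comparison in the earlier lemma handling case~(b) of the $e(\lambda) = 2$ analysis, and a similar direct tabulation of the small-dimensional irreducible $kH$-modules should close out this last boundary case.
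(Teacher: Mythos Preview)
Your approach has a genuine gap. The result \cite[Proposition~5.2(ii)]{MaTi} treats symplectic groups $\Sp_{2r}(q)$ in \emph{odd} characteristic, where the classical Weil representations exist and provide the only potential obstruction to reducibility of tensor products. For $q$ even there is no analogous Weil-module dichotomy to invoke. Indeed, \cite[Theorem~1.1]{MaTi} explicitly lists $H=\mathbf{O}_{2r+1}(q)$ with $q$ even among its \emph{unresolved exceptions}; if \cite[Proposition~5.2(ii)]{MaTi} applied via the isomorphism $\mathbf{O}_{2r+1}(q)\cong\Sp_{2r}(q)$ as you suggest, this case would not appear on that list and Proposition~\ref{prop:final} would be superfluous. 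Your plan to rule out Weil modules by dimension comparison therefore does not connect to any available reducibility criterion: the result you want to cite simply does not cover the case at hand.

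The paper's proof is entirely different and, notably, does not use the $Q$-linear large hypothesis at all (the proposition assumes only that $W$ is absolutely irreducible). It works with the stabilizer $P_2=Q_2L_2$ of a singular $2$-space rather than a $1$-space. One first determines the irreducible characters of $Q_2$ (Lemma~\ref{Q2}): these are either linear or of degree $[Q_2:Z(Q_2)]^{1/2}$, the nonlinear ones parametrised by characters of $Z(Q_2)$ not trivial on $[R_2,R_2]$, where $R_2\leq Q_2$ is the special subgroup generated by long root subgroups. A lower bound for $\dim V$ comes from counting nontrivial $Z(Q_2)$-weight spaces in $W$. For the upper bound one uses that, since $q$ is even, $Z(R_2)$ acts as $-1$ on each $W_\lambda$ and hence trivially on $W_\lambda\otimes (W^F)_\lambda$; this produces a small $Q_2\mathbf{O}_{2r-3}(q)$-invariant subspace of $V$, whence $\dim V\leq (q+1)m_\lambda^2[H:P_2]$. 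Comparing the two bounds gives a contradiction for $q\geq 8$; for $q=4$ an inductive refinement of the upper bound (using known reducibility for $\mathbf{O}_3(4)$ and $\mathbf{O}_5(4)$ from \cite{GAP}) finishes the argument.
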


Before beginning the proof of the proposition, we set up some further notation
and establish 2 lemmas.

Let $P_2$ denote the stabilizer in $H$ of a singular $2$-space. We denote the 
unipotent radical of $P_2$ 
by $Q_2$ and fix a 
Levi factor $L_2$, so $L_2\cong \GL_2(q){\rm O}_{2r-3}(q)$. Now 
$Z(Q_2)$ is an elementary abelian group of order $q^3$,
the natural module of the $O_3(q)\cong {\rm PGL}_2(q)$ subgroup of $L_2$.
 By $W_\lambda$ we denote 
the $Z(Q_2)$ weight 
spaces of $W$, for $\lambda\in {\rm Hom}(Z(Q_2),k^*)$; hereafter, write
$Z(Q_2)^*$ for this last ${\rm Hom}$ space. The group $L_2$ has three orbits on 
the non-trivial characters of 
$Z(Q_2)$ and the results of Table~\ref{tab:HP} show that at least 
$q(q-1)^2/2$ of the weight spaces are 
non-trivial. 

As the $L_2$-orbits on $Z(Q_2)^*$ have distinct lengths, we can
argue as in the proof of Lemma~\ref{lem:equiv} to obtain:

\begin{lemma}\label{lem:equiv2}
Denote by ${\mathcal O}_i$, for $i\in I$, the orbits of $L_2$ on 
$Z(Q_2)^*$. We have that
$W_{{\mathcal O}_i}$ and $(W_{{\mathcal O}_i})^F$ are equivalent $kQ_2$-modules
for all $i$. 
\end{lemma}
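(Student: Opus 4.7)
The plan is to mimic the proof of Lemma~\ref{lem:equiv} nearly verbatim, with $Q_2$ in place of $Q$ and $Z(Q_2)^* = {\rm Hom}(Z(Q_2),k^*)$ in place of ${\rm Hom}(Q/[Q,Q],k^*)$. Concretely, I would first decompose
\[
W_{\mathcal{O}_i} \;=\; \bigoplus_{\lambda \in \mathcal{O}_i} W_\lambda
\]
as a direct sum of $|\mathcal{O}_i|$ pairwise non-isomorphic $kZ(Q_2)$-isotypic components. Taking $F$ to be the standard Frobenius morphism of $G$, chosen so as to stabilise the standard parabolic $P_2$ (and hence $Q_2$, $Z(Q_2)$ and $L_2$), the semilinearity of $F$ guarantees that $(W_{\mathcal{O}_i})^F$ decomposes analogously into $|\mathcal{O}_i|$ pairwise non-isomorphic $kZ(Q_2)$-isotypic components, indexed now by the image orbit $\hat F(\mathcal{O}_i)\subseteq Z(Q_2)^*$, where $\hat F$ denotes the bijection on characters induced by $F$.

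Because $F$ normalises $L_2$, the map $\hat F$ permutes the three $L_2$-orbits on the non-trivial characters of $Z(Q_2)$. The remainder of the argument is then formally identical to that of Lemma~\ref{lem:equiv}: once the three orbits are known to have pairwise distinct lengths, $\hat F(\mathcal{O}_i)=\mathcal{O}_i$ for each $i$, and thus $W_{\mathcal{O}_i}$ and $(W_{\mathcal{O}_i})^F$ share the same family of $Z(Q_2)$-weight characters. The desired $kQ_2$-equivalence then follows from the semilinear map induced by $F$, composed if necessary with a suitable element of $L_2$ stabilising $\mathcal{O}_i$, exactly as before.

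The principal obstacle, and the only step that is not purely formal, is the verification that the three $L_2$-orbits on $Z(Q_2)^*$ have pairwise distinct lengths. This reduces to an inspection of the $L_2$-orbits on the non-zero vectors of the natural three-dimensional $\PGL_2(q)\cong {\rm O}_3(q)$-module $Z(Q_2)$: one orbit of non-zero singular vectors and two orbits of non-singular vectors (in the $q$ even case relevant to Proposition~\ref{prop:final}), with orbit sizes computable in the style of Table~\ref{tab:HP} and its accompanying remarks. The lower bound ``at least $q(q-1)^2/2$ non-trivial weight spaces'' already recorded in the preceding paragraph reflects the same orbit data and, once the three sizes are written down explicitly, confirms their pairwise distinctness, completing the proof.
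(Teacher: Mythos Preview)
Your proposal is correct and follows essentially the same approach as the paper. The paper's own justification is a single sentence preceding the lemma statement: ``As the $L_2$-orbits on $Z(Q_2)^*$ have distinct lengths, we can argue as in the proof of Lemma~\ref{lem:equiv},'' and your write-up simply unpacks that reference in detail, correctly isolating the distinctness of the three orbit lengths as the only non-formal ingredient.
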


Now we require some information about the irreducible characters of 
$Q_2$ and the construction of those which are nonlinear. 

\begin{lemma} \label{Q2} The irreducible characters of $Q_2$ are either
\begin{itemize}
\item linear, and there are $|Q_2|/q$ of them, or
\item of degree $[Q_2:Z(Q_2)]^{1/2}$, and there are $q^3-q^2$ of them. 
\end{itemize}

\end{lemma}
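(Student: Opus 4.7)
My plan is to reduce the count to the standard representation theory of $p$-groups of nilpotency class two, once the structure of $Q_2$ is identified. The structural input I will use --- obtained via the Chevalley commutator relations, exactly as in the proof of the earlier lemma for $H = {\mathbf O}_{2r+1}(2)$ where the analogous statement $|[U,U]|=2$, $|Z(U)|=8$ was extracted --- is the following: $Q_2$ is of nilpotency class two with $|Q_2| = q^{4r-3}$; $Z(Q_2)$ is elementary abelian of order $q^3$ (as already noted in the text); $[Q_2,Q_2]$ is a single long root subgroup of order $q$ sitting inside $Z(Q_2)$; and the commutator map $c\colon Q_2/Z(Q_2)\times Q_2/Z(Q_2)\to [Q_2,Q_2]$ is $\mathbb{F}_q$-bilinear. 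Since $[Q_2,Q_2]\leq Z(Q_2)$, the linear characters of $Q_2$ are exactly the characters of the abelian quotient $Q_2/[Q_2,Q_2]$, giving $|Q_2|/q$ of them.

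For the nonlinear characters I proceed as follows. Every nonlinear $\chi\in \mathrm{Irr}(Q_2)$ satisfies $\chi|_{Z(Q_2)}=\chi(1)\lambda$ for some $\lambda\in \mathrm{Hom}(Z(Q_2),k^*)$ with $\lambda|_{[Q_2,Q_2]}\neq 1$. For such a $\lambda$, the alternating bilinear form $B_\lambda(\bar x,\bar y):=\lambda([x,y])$ on $Q_2/Z(Q_2)$ is nondegenerate: for any $x\notin Z(Q_2)$ the map $c(x,\cdot)\colon Q_2/Z(Q_2)\to [Q_2,Q_2]$ is a nonzero $\mathbb{F}_q$-linear map into the one-dimensional $\mathbb{F}_q$-space $[Q_2,Q_2]$, hence surjective, so its image is not contained in $\ker(\lambda|_{[Q_2,Q_2]})$. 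The standard construction for class-two $p$-groups (choose a subgroup $A\supseteq Z(Q_2)$ whose image in $Q_2/Z(Q_2)$ is a maximal $B_\lambda$-isotropic subspace, extend $\lambda$ to $A$, and induce to $Q_2$) then produces a unique irreducible $\chi_\lambda$ of $Q_2$ with central character $\lambda$, of degree $[Q_2:A]=[Q_2:Z(Q_2)]^{1/2}=q^{2r-3}$.

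To conclude, $\chi\mapsto \lambda$ is a bijection from the nonlinear irreducibles onto the set of admissible $\lambda$, which has size $|\mathrm{Hom}(Z(Q_2),k^*)|-|\mathrm{Hom}(Z(Q_2)/[Q_2,Q_2],k^*)|=q^3-q^2$. Hence there are exactly $q^3-q^2$ nonlinear irreducible characters, each of degree $q^{2r-3}=[Q_2:Z(Q_2)]^{1/2}$, as claimed. As a consistency check, $|Q_2|/q+(q^3-q^2)q^{4r-6}=q^{4r-3}=|Q_2|$, matching $\sum_\chi \chi(1)^2$.

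The principal obstacle is the structural step, in particular the verification that $|[Q_2,Q_2]|=q$ rather than $q^3$. This is a feature peculiar to characteristic two: in odd characteristic, the commutators among the long-root subgroups spanning $Q_2/Z(Q_2)$ generate a derived subgroup equal to $Z(Q_2)$, whereas in characteristic two many of those commutators vanish. The analog for $q=2$ is carried out explicitly in the proof of the preceding lemma, and the argument transfers verbatim to arbitrary even $q$ once the long-root subgroups are read with $\mathbb{F}_q$-coefficients.
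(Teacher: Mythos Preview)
Your argument is correct and reaches the same conclusion as the paper, though by a somewhat different route. Both proofs rest on the same structural facts --- $Q_2$ has class two, $|Z(Q_2)|=q^3$, $|[Q_2,Q_2]|=q$ --- and both construct the nonlinear irreducibles by inducing a linear extension of $\lambda$ from a maximal abelian subgroup containing $Z(Q_2)$. The distinction lies in how irreducibility is verified. The paper introduces the auxiliary subgroup $R_2\leq Q_2$ generated by the long root subgroups, observes that $R_2$ is special with $[x,R_2]=Z(R_2)$ for every $x\in R_2\setminus Z(R_2)$ and $[R_2:Z(R_2)]=[Q_2:Z(Q_2)]$, and deduces irreducibility of the induced character by restriction to $R_2$; it then finishes by the sum-of-squares count. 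You instead work directly on $Q_2/Z(Q_2)$, showing nondegeneracy of the alternating form $B_\lambda$ via the neat observation that the commutator pairing is $\mathbb F_q$-bilinear with one-dimensional target, so any nonzero slice $c(x,\cdot)$ is automatically surjective. Your route is more streamlined and invokes the standard class-two machinery explicitly; the paper's route is more concrete and sidesteps the need to justify $\mathbb F_q$-bilinearity, since the specialness of $R_2$ already packages the nondegeneracy.

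Two minor numerical slips: $|Q_2|=q^{4r-5}$ rather than $q^{4r-3}$, so the nonlinear degree is $[Q_2:Z(Q_2)]^{1/2}=q^{2r-4}$ rather than $q^{2r-3}$ (the paper records $[Q_2:Z(Q_2)]=q^{2(2r-4)}$ in the subsequent proof). Neither affects your argument, since the lemma is stated in group-theoretic terms and your sum-of-squares check $|Q_2|/q+(q^3-q^2)[Q_2:Z(Q_2)]=|Q_2|$ holds regardless of the explicit exponents.
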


\begin{proof} The commutator subgroup of $Q_2$ has order $q$ so the 
statement concerning linear characters is clear. 
Let $R_2$ be the subgroup of $Q_2$ generated by the long root subgroups 
(with respect to a fixed torus inside $P_2$). Note that 
$[R_2,R_2] = [Q_2,Q_2]$ and hence $Z(Q_2)$ has exactly $q^3-q^2$ characters
 which do not have $[R_2,R_2]$ in their kernel. 
Moreover, we note that the group $R_2$ is special, and such that 
$[x,R_2] = Z(R_2)$ for all $x \in R_2 \setminus Z(R_2)$.
Thus all nonlinear irreducible characters of $R_2$ have degree 
$[R_2:Z(R_2)]^{1/2} = [Q_2:Z(Q_2)]^{1/2}$.

Now let $\lambda$ be a character of $Z(Q_2)$ which does not have 
$[R_2,R_2]$ in its kernel. Let $A$ be an 
abelian subgroup of $R_2$ such that $\langle A, Z(R_2) \rangle$ is a 
maximal abelian subgroup of $R_2$. Since $A\cap Z(R_2) = 1$, and 
$Z(Q_2)\cap R_2 = Z(R_2)$, we have that $A\cap Z(Q_2) = 1$ as well.
As $\langle A, Z(Q_2) \rangle$ is an abelian subgroup of $Q_2$ we see 
that $\lambda$ extends to a linear character 
${\tilde \lambda}$ of $\langle A, Z(Q_2) \rangle$. Thus the degree of 
${\tilde \lambda}\uparrow^{Q_2}$ is 
$[Q_2:Z(Q_2)]^{1/2}$. The restriction of ${\tilde \lambda}\uparrow^{Q_2}$ to 
$Z(R_2)$ is nontrivial and again since $R_2$ is special,
$ {\tilde \lambda}\uparrow^{Q_2}\downarrow_{R_2}$ is irreducible; hence  
${\tilde \lambda}\uparrow^{Q_2}$ is an 
irreducible character of degree claimed above. Now the construction of
 ${\tilde \lambda}\uparrow^{Q_2}$ can be 
carried out for all characters of $Z(Q_2)$ which do not have $[R_2,R_2]$ in 
their kernel. Evidently all such characters are pairwise distinct as their 
restrictions to $Z(Q_2)$ are distinct as well. 
So summing the squares of the degrees of the irreducible $Q_2$ characters 
constructed so far yields $|Q_2|$. Thus 
the first orthogonality relation of characters shows that we have 
constructed all irreducible characters of $Q_2$.\end{proof}

\begin{proof}[Proof of Proposition~\ref{prop:final}] Arguing as in the proof of 
Lemma~\ref{lem:F-inv} and using
Lemma~\ref{lem:equiv2}, we 
have that $W_\lambda$ and 
$(W^F)_\lambda$ are equivalent 
as $Q_2$-modules. We note here that 
$\dim W_\lambda = m_\lambda [Q_2:Z(Q_2)]^{1/2}$, where $m_\lambda$
is the multiplicity of the  $Q_2$ character ${\tilde \lambda}\uparrow^{Q_2}$
in $W_\lambda$. 

This leads to a lower bound
$$ \dim V\geq (m_\lambda)^2[Q_2:Z(Q_2)](q^2-q)^2(q-1)^2/4.$$

We now derive an upper bound for $\dim V$. Recall from above 
that the subgroup $R_2$ of $Q_2$ generated by long root elements is
special and that any irreducible representation of $Q_2$ which restricts 
nontrivially to $Z(R_2)$ restricts  
irreducibly to $R_2$. As $q$ is even, if
$\lambda \in Z(Q_2)^*$ and  $Z(R_2) \not \subset {\rm Ker}(\lambda)$, then
$Z(Q_2)$ acts as minus the identity on $W_\lambda$ and thus acts 
trivially on $W_\lambda \otimes W_\lambda^F$. 
Thus the action of $Q_2$ on 
$W_\lambda \otimes W_\lambda^F$ is $m_\lambda^2$ times the regular character of 
$Q_2/Z(Q_2)$.
In particular, 
$C_{W_\lambda \otimes W_\lambda^F}(Q_2) \neq 0$ and this subspace contains a 
$Q_2{\rm O}_{2r-3}(q)$-submodule 
$S_\lambda$ of dimension at most $(m_\lambda)^2$. 
Thus we see that the $P_2$-module  
$\sum_\lambda C_{W_\lambda \otimes W_\lambda^F}(Q_2)$, on which $Q_2$ acts
trivially,  
affords a $\GL_2(q) \times {\rm O}_{2r-3}(q)$ module. As 
the ${\rm O}_{2r-3}(q)$-submodule 
$S_\lambda$ is contained in  $\sum_\lambda C_{W_\lambda \otimes W_\lambda^F}(Q_2)$, 
we see 
by Frobenius reciprocity that 
$\sum_\lambda C_{W_\lambda \otimes W_\lambda^F}(Q_2)$ contains a quotient of 
the $P_2$-module 
$S_\lambda\uparrow_{Q_2:{\rm O}_{2r-3}(q)}^{P_2}$, whose
socle contains a nontrivial irreducible ${\rm GL}_2(q)\times {\rm O}_{2r-3}$
submodule 
$S$. As 
$\dim S \leq (q+1)m_\lambda^2$, we see that 
$$\dim V \leq (q+1)m_\lambda^2 [H:P_2] =  
(q+1)m_\lambda^2 (q^{2r}-1)(q^{2r-2}-1)/(q^2-1)(q-1)$$
$$= m_\lambda^2 (q^{2r}-1)(q^{2r-2}-1)/(q-1)^2. $$



Combining the upper and lower bounds and using the fact that 
$[Q_2:Z(Q_2)] = q^{2(2r-4)}$, leads to the inequality
$$q^{2(2r-4)}(q^2-q)^2(q-1)^2(q-1)^2 < 4(q^{2r}-1)(q^{2r-2}-1),$$ which 
reduces to 
$$q^{(4r-6)}(q-1)^6 < 4(q^{2r}-1)(q^{2r-2}-1),$$ and leads to a 
contradiction when 
$q \geq 8$ and $r\geq 3$. 

When $q = 4$ we improve our upper bound for $\dim V$ by $1/2$ using 
induction. 
All the modular character tables for ${\rm O}_3(4) \cong {\rm SL}_2(4)$ and 
${\rm O}_5(4) \cong {\rm Sp}_4(4)$ are in 
\cite{GAP} and we see that $W \otimes W'$ is reducible, for any pair of irreducible modules $W$ and $W'$,
 and so in particular,
$W \otimes W^F$ is reducible. Thus, by dualizing if necessary, we see that 
$W_\lambda \otimes (W^F)_\lambda$ has an ${\rm O}_{2r-3}(4)$-submodule of 
dimension at most 
$m_\lambda^2/2$ (rather than $m_\lambda^2$ as before), which leads to the 
improved upper 
bound. The improved inequality yields a contradiction when $q=4$.

This completes the proof of the proposition.\end{proof}
 
We have now established:

\begin{thm} Let $W$ be an irreducible $Q$-linear large $kH$-module, 
$G\subset \GL(W)$ the smallest classical group
containing the image of $H$, and $V$ a tensor decomposable non-restricted 
irreducible $kG$-module. Let $F$
be a fixed ${\rm char}(k)$-power Frobenius morphism of the group $G$.
If $V$ is a twisted tensor product of restricted irreducible $kG$-modules, 
then 
$V|_H$ is reducible unless $q \leq 3$ and 
$V$ is a Frobenius twist of a module of the form $X \otimes Y^F$, where 
$X,Y\in\{W,W^*\}$, and $Y^F$ is a Frobenius twist of $Y$
such that $X|_H$ and $Y^F|_H$ are inequivalent $kH$-modules. 
\end{thm}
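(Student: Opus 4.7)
The plan is to assemble the pieces built in Sections~5 and~6 in the right order, since the final theorem is essentially the synthesis of all the preceding lemmas and propositions with no new argument required. By Steinberg's tensor product theorem write
$$V \cong V(\mu_0) \otimes V(\mu_1)^F \otimes \cdots \otimes V(\mu_t)^{F^t},$$
with each $\mu_i$ a restricted dominant weight and at least two of the $\mu_i$ nonzero, because $V$ is non-restricted and tensor-decomposable.

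First I would apply Lemma~\ref{NR_e=1}: if any nonzero $\mu_i$ satisfies $e(\mu_i) > 1$, Theorem~\ref{thm_restricted} makes $V(\mu_i)|_H$ reducible, hence $V|_H$ reducible. So one reduces to the case $e(\mu_i) \leq 1$ for every $i$, and a second appeal to Theorem~\ref{thm_restricted} identifies each nontrivial tensor factor as a Frobenius twist of $W$ or of $W^\ast$. Next, split on the type of $H$: for $H$ of type $\Lin$, Proposition~\ref{NR_linear} directly gives reducibility, matching the theorem's conclusion with no exception required. For $H$ not of type $\Lin$, Lemma~\ref{NR_atmost2} simultaneously limits the number of nontrivial factors to at most two and shows that $W$ and $W^F$ are inequivalent $kH$-modules, so $V$ is, up to a global Frobenius twist, of the form $X \otimes Y^\delta$ with $X, Y \in \{W, W^\ast\}$ and $X|_H$ and $Y^\delta|_H$ inequivalent.

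The final step is to apply the unnumbered two-factor proposition immediately preceding Proposition~\ref{prop:final}: this invokes Theorem~1.1 of~\cite{MaTi} to reduce the remaining open configurations to $q \leq 3$, $H = \Sp_{2r}(5)$, $H = \Lin_3(4)$, or $H = {\mathbf O}_{2r+1}(q)$ with $q$ even. The case $H = \Lin_3(4)$ is incompatible with the $Q$-linear large hypothesis by Lemma~\ref{lem:basecases}, and $H = \Sp_{2r}(5)$ is ruled out because $Q$-linear large modules are not Weil modules. Of the two survivors, $q \leq 3$ is precisely the exception stated in the theorem, while $q \geq 4$ together with $H = {\mathbf O}_{2r+1}(q)$ is settled by Proposition~\ref{prop:final}. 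The genuine work is already done there in the character-theoretic analysis of $P_2 = Q_2L_2$ via Lemma~\ref{Q2}; the rest of the argument is pure bookkeeping, and the main obstacle of the theorem was in fact the even-characteristic orthogonal case handled by Proposition~\ref{prop:final}.
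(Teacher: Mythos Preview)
Your proposal is correct and follows essentially the same approach as the paper; indeed the paper gives no separate formal proof of this theorem at all, simply writing ``We have now established'' after Proposition~\ref{prop:final}, so your write-up is precisely the bookkeeping synthesis the authors leave implicit. One small redundancy: once Lemma~\ref{NR_e=1} forces $e(\mu_i)\leq 1$, the definition of $e$ alone gives $\mu_i\in\{0,\lambda_1,\lambda_\ell\}$ and hence $V(\mu_i)\in\{k,W,W^*\}$, so the ``second appeal'' to Theorem~\ref{thm_restricted} is not strictly needed.
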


\end{section}

\bibliographystyle{alpha}

\end{document}